\theoremstyle{plain}
    \newtheorem{thm}{Theorem}[section]
    \newtheorem{claim}[thm]{Claim}
    \newtheorem{corollary}[thm]{Corollary}
    \newtheorem{lemma}[thm]{Lemma}
    \newtheorem{proposition}[thm]{Proposition}
    \newtheorem{question}[thm]{Question}
    \newtheorem{theorem}[thm]{Theorem}
\theoremstyle{definition}
    \newtheorem{example}[thm]{Example}
    \newtheorem{definition}[thm]{Definition}
    \newtheorem*{notation*}{Notation and Terminology}
    \newtheorem{remark}[thm]{Remark}
\theoremstyle{remark}
\newcommand{\C}{\mathbb{C}}
\newcommand{\Q}{\mathbb{Q}}
\newcommand{\R}{\mathbb{R}}
\newcommand{\Z}{\mathbb{Z}}
\newcommand{\alb}{\operatorname{alb}}
\newcommand{\diag}{\operatorname{diag}}
\newcommand{\id}{\operatorname{id}}
\newcommand{\Ker}{\operatorname{Ker}}
\newcommand{\NS}{\operatorname{NS}}
\newcommand{\Prep}{\operatorname{Prep}}
\newcommand{\rank}{\operatorname{rank}}
\newcommand{\Alb}{\operatorname{Alb}}
\newcommand{\Pic}{\operatorname{Pic}}
\newcommand{\nc}{\newcommand}
\nc{\cH}{{\mathcal H}}
\nc{\cA}{{\mathcal A}}
\nc{\cG}{{\mathcal G}}
\nc{\cC}{{\mathcal C}}
\nc{\cO}{{\mathcal O}}
\nc{\cI}{{\mathcal I}}
\nc{\cB}{{\mathcal B}}
\nc{\cY}{{\mathcal Y}}
\nc{\cK}{{\mathcal K}}
\nc{\cX}{{\mathcal X}}
\nc{\cS}{{\mathcal S}}
\nc{\cE}{{\mathcal E}}
\nc{\cF}{{\mathcal F}}
\nc{\cZ}{{\mathcal Z}}
\nc{\cQ}{{\mathcal Q}}
\nc{\cN}{{\mathcal N}}
\nc{\cP}{{\mathcal P}}
\nc{\cL}{{\mathcal L}}
\nc{\cM}{{\mathcal M}}
\nc{\cT}{{\mathcal T}}
\nc{\cW}{{\mathcal W}}
\nc{\cU}{{\mathcal U}}
\nc{\cJ}{{\mathcal J}}
\nc{\cV}{{\mathcal V}}
\nc{\bH}{{\mathbb H}}
\nc{\bA}{{\mathbb A}}
\nc{\bG}{{\mathbb G}}
\nc{\bC}{{\mathbb C}}
\nc{\bO}{{\mathbb O}}
\nc{\bI}{{\mathbb I}}
\nc{\bB}{{\mathbb B}}
\nc{\bY}{{\mathbb Y}}
\nc{\bK}{{\mathbb K}}
\nc{\bX}{{\mathbb X}}
\nc{\bS}{{\mathbb S}}
\nc{\bE}{{\mathbb E}}
\nc{\bF}{{\mathbb F}}
\nc{\bZ}{{\mathbb Z}}
\nc{\bQ}{{\mathbb Q}}
\nc{\bN}{{\mathbb N}}
\nc{\bP}{{\mathbb P}}
\nc{\bL}{{\mathbb L}}
\nc{\bM}{{\mathbb M}}
\nc{\bT}{{\mathbb T}}
\nc{\bW}{{\mathbb W}}
\nc{\bU}{{\mathbb U}}
\nc{\bD}{{\mathbb D}}
\nc{\bJ}{{\mathbb J}}
\nc{\bV}{{\mathbb V}}
\nc{\bbZ}{{\mathbb Z}}
\nc{\bR}{{\mathbb R}}
\nc{\fr}{{\rightarrow}}
\nc{\co}{{\nabla}}
\nc{\cu}{{\overlineline{\nabla}}}
\title [Essential dimensions]{Essential dimensions of polarized endomorphisms of abelian varieties}
\author{Yujie Luo, Keiji Oguiso and De-Qi Zhang}
\date{}
\address{Department of Mathematics, National University of Singapore, Singapore 119076, 
Singapore}
\email{yujieluo96@gmail.com~and~lyj96@nus.edu.sg}
\address{Mathematical Sciences, the University of Tokyo, 
Japan, 
and National Center for Theoretical Sciences, 
Mathematics Division, National Taiwan University, 
Taipei, Taiwan}
\email{oguiso@ms.u-tokyo.ac.jp}
\address{Department of Mathematics, National University of Singapore, Singapore 119076, 
Singapore}
\email{matzdq@nus.edu.sg}
\subjclass[2010]
{Primary 14K02; 
Secondary 08A35. 
}
\keywords{Essential dimensions, Polarized endomorphisms, Abelian varieties}
\begin{document}

\begin{abstract}
Let $f$ be a polarized endomorphism of an abelian variety $A$. Koll\'ar and Zhuang asked whether the essential dimension $\mathrm{ed}(f)$ equals $\dim(A)$. We provide counterexamples to this question. Instead, we prove that, under the hypothesis that every subtorus of $A$ is $f$-preperiodic up to translation (a condition arising from the dynamical Manin--Mumford conjecture), we have $\mathrm{ed}(f^s)=\dim(A)$ for some integer $s>0$. Our examples also show the necessity of both the hypothesis and iteration. We also give an affirmative answer to Koll\'ar and Zhuang's original question when $A$ is a simple abelian surface and $f$ is not $2$-polarized.
\end{abstract}

\maketitle

\tableofcontents


%
%
%
%
\section{Introduction}

Throughout this article, we work over an algebraically closed field of characteristic zero.

\medskip

The essential dimensions of algebraic objects, first introduced in a modern context by Buhler and Reichstein \cite{BR97, BR99}, have been studied for decades. For more on the history, we refer the reader to Reichstein's ICM talk \cite{Reic10}; see also the introduction in recent papers \cite{FW19,FKW24}. In this article, we are particularly interested in the essential dimensions of finite morphisms between algebraic varieties. Recall that the \emph{essential dimension of a generically finite (dominant rational) map} $f: X \dasharrow Y$, denoted by $\mathrm{ed}(f)$, is the smallest integer
$d$ such that $f$ is birational to the pull-back of a map of varieties of dimension $d$ \cite{BR97}. When $f$ is clear from the context, we also denote $\mathrm{ed}(f)$ by $\mathrm{ed}(X/Y)$. A generically finite map $f: X\dasharrow Y$ is called \emph{incompressible} if its essential dimension is equal to $\dim(X)$ \cite{BR97,FKW24,KZ}.

Recently, Koll\'ar and Zhuang proved that the multiplication-by-$m$ map $m_A: A \to A$ of an abelian variety $A$ is incompressible for $m\geq 2$ \cite[Theorem~1]{KZ}, which was conjectured by Brosnan \cite[Conjecture~6.1]{FS22}.

In this paper, we identify a Cartier divisor $D$ with its corresponding line bundle $\mathcal{O}(D)$. Recall that an endomorphism $f$ of a projective variety $X$ is \emph{$q$-polarized}, or simply \emph{polarized} (with respect to an ample line bundle $H$), if $f^*H\sim qH$ for some integer $q>1$. Note that multiplication-by-$m$ map $m_A: A \to A$ of an abelian variety $A$ is $m^2$-polarized for $m>1$. 

Koll\'ar and Zhuang raised the following question, which can be viewed as a generalization of \cite[Theorem~1]{KZ}:

\begin{question}[{\cite[Question~19]{KZ}}]\label{ques: polarized incompressible}
    Is every polarized endomorphism incompressible?
\end{question}

As mentioned in \cite{KZ}, Fakhruddin pointed out that Question~\ref{ques: polarized incompressible} has a negative answer in positive characteristic, even after iteration (see Example~\ref{ex: positive characteristic iteration}). 

Note that if $X$ is a normal projective variety that admits a polarized endomorphism, then the Kodaira dimension $\kappa(X)\leq 0$ \cite[Theorem~1.3]{NZ}. Before further discussing Question~\ref{ques: polarized incompressible}, we make the following remarks, which suggest that polarized endomorphisms are built from those on abelian varieties and rationally connected varieties. In this paper, we focus on abelian varieties.

\begin{remark}\label{rem: kernel and incompressible}
We consider a smooth projective variety $X$ admitting a $q$-polarized endomorphism $f$. Recall that the \emph{rank} of an abelian group $G$ is the minimum number of generators of $G$ when $G\neq \{0\}$, and we set $\rank \{0\}=0$.
\begin{enumerate}
    \item Suppose that $f : X \to X$ is a finite abelian cover with Galois group $G$. Then $\mathrm{ed}(f) \leq \text{rank}(G)$ (\cite[Theorem~6.1]{BR97}, see also Lemma~\ref{lem: facts of essential dimension}(4)). Thus, a necessary condition for a self-isogeny $f$ of an abelian variety $A$ to be incompressible is $\rank \Ker(f) \ge \dim(A)$. However, this is not a sufficient condition, see Example~\ref{ex: counterexample for stronger version of KZ question} and Remark~\ref{rem: rank is not ed}.
    \item 
    Suppose that $X$ has maximal Albanese dimension. Then the Albanese map $\alb_X: X \to \Alb(X)$ is an isomorphism, hence $X$ is an abelian variety (see Lemma~\ref{lem: maximal Albanese dimension}).
    \item If $X$ is non-uniruled, by Proposition \ref{thm: quasi-abelian and non-uniruled}, one can lift $f$ to a polarized endomorphism $f_A$ on an abelian variety $A$ via a quasi-\'etale cover $\sigma: A \to X$, such that $\mathrm{ed}(f^s)\geq \mathrm{ed}(f_A^s)$ for all $s \ge 1$.
    If $X$ is uniruled, by \cite[Proposition 1.6]{MZ18}, one can descend $f$ to a polarized endomorphism $f_Y$ on $Y$ via a special maximal rationally connected fibration $X\dasharrow Y$ with $Y$ being non-uniruled so that Proposition \ref{thm: quasi-abelian and non-uniruled} is applicable again.
    \item Essential dimensions of endomorphisms of an abelian variety $A$ are translation invariant (and in fact, birational invariant): $\mathrm{ed}(f)=\mathrm{ed}(t_x\circ f)=\mathrm{ed}(f\circ t_y)$ for any $x,y\in A$ (see Lemma~\ref{lem: facts of essential dimension}(3)). Thus it suffices to consider essential dimensions of self-isogenies of $A$. Indeed, both $t_x\circ f$ and $f\circ t_y$ are still $q$-polarized.
\end{enumerate}
\end{remark}

There are many counterexamples to Question~\ref{ques: polarized incompressible} as we shall see later. The following example, which came from a communication with S. Meng and J. Xie, shows that polarized endomorphisms need not be incompressible.

\begin{example}\label{ex: counterexample for KZ question}
Let $E_n:=\mathbb{C}/(\mathbb{Z}\oplus ni\mathbb{Z})$ be an elliptic curve and $h_n: z\to niz$ an isogeny of $E_n$. Consider the self-isogeny $f(z_1,z_2)=(z_2,h_n(z_1))$ of $E_n\times E_n$. Since $f^2=h_n\times h_n$ is $n^2$-polarized, $f$ is $n$-polarized (cf. \cite[Proof of Theorem~2.7, Note 1]{Zh10}). Note that $\Ker(f) \cong \Ker(h_n)= \langle i/n\rangle \cong \mathbb{Z}/n^2\mathbb{Z}$. Hence $\mathrm{ed}(f)\leq\rank \Ker(f)=1$. Moreover, $f^2$ factors through a multiplication-by-$n$ map, hence $\mathrm{ed}(f^s)=2$ for $s\geq 2$ (cf. Lemma~\ref{lem: facts of essential dimension}(3)).
\end{example}

We remark that using the same idea, one can provide counterexamples to Question~\ref{ques: polarized incompressible} when the base varieties are rationally connected  (see Example~\ref{ex: counterexample rationally connected}).

Recall that for an endomorphism $f$ on a variety $X$, we say a subvariety $Y\subset X$ is \emph{$f$-preperiodic} if $f^s(Y)=f^r(Y)$ for some non-negative integers $r>s$. Moreover, if $X$ is an abelian variety, we say $Y$ is \emph{$f$-preperiodic up to translation} (which, by Lemma~\ref{lem: numerically parallel abelian varieties}, is equivalent to being \emph{numerically $f$-preperiodic}) if $f^s(Y)$ is a translation of $f^r(Y)$ for some non-negative integer $r>s$. We say $Y$ is \emph{$f$-periodic} (resp. \emph{$f$-periodic up to translation}) if we can take $s=0$.

In view of Example \ref{ex: counterexample for KZ question}, it is natural to ask whether a polarized endomorphism becomes incompressible after some iteration. However, counterexamples still exist. We construct a polarized self-isogeny $f$ on a non-simple abelian surface $A$ for which no iteration is incompressible by finding a one-dimensional subtorus $E$ that is not $f$-preperiodic and such that the degree of the restricted map $f^s|_E$ is one for all positive integers $s$; we refer the reader to Example~\ref{ex: counterexample for stronger version of KZ question} and Proposition \ref{prop: essential dimension be one criterion} for details.

In view of Remark~\ref{rem: kernel and incompressible} and Examples~\ref{ex: counterexample for KZ question} and \ref{ex: counterexample for stronger version of KZ question}, it is natural to ask the following question: {\it For a polarized self-isogeny $f$ of an abelian variety $A$, if every subtorus of $A$ is $f$-preperiodic (this holds when $f = m_A$ or $A$ is simple), is $f^s$ incompressible for some positive integer $s$?}

In this paper, we give an affirmative answer to the above-mentioned question.

\begin{theorem}[{Theorem~\ref{thm: main with all subtori f numerically periodic}}]\label{thm: ed main}
    Let $A$ be an abelian variety, and let $f$ be a polarized endomorphism of $A$. Suppose that every subtorus of $A$ is $f$-preperiodic up to translation. Then $f^s$ is incompressible for some positive integer $s$. 
\end{theorem}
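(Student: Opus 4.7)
The plan is to reduce Theorem~\ref{thm: ed main} to Koll\'ar and Zhuang's theorem by showing that under the hypothesis, some iterate $f^s$ acts as a central-scalar endomorphism on each isotypic component of $A$, from which incompressibility follows.

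\textbf{Step 1: Reductions.} By Remark~\ref{rem: kernel and incompressible}(4) we may assume $f$ is a self-isogeny with $f(0)=0$. Since $f$ is a homomorphism and every subtorus contains $0$, each $f^k(B)$ is again a subtorus, and two subtori differing by a translation must coincide. Hence the hypothesis simplifies to: every subtorus of $A$ is $f$-preperiodic, and since $f$ is invertible on $H_1(A,\mathbb{Q})$, preperiodicity upgrades to periodicity. Applying Poincar\'e complete reducibility, $A$ is isogenous to $\prod_i B_i^{n_i}$ with the $B_i$ simple and pairwise non-isogenous; each isotypic component $A_i$ is an $f$-periodic subtorus, so after iteration $f$ preserves each $A_i$, and we reduce (via isogeny-compatible behavior of essential dimension) to the case $A = B^n$ isotypic.

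\textbf{Step 2: Scalar structure via projective dynamics.} Set $D := \operatorname{End}^0(B)$, so $\operatorname{End}^0(A) = M_n(D)$. Sub-abelian varieties of $A$ isogenous to $B$ correspond bijectively to points of the $D$-projective space $\mathbb{P}^{n-1}(D)$, and $\tilde f$ acts on this space by a $D$-projective linear automorphism. By hypothesis, every point is $\tilde f$-periodic. Using that $\tilde f$ is semisimple (the Rosati relation $f\bar f = q$ makes $\tilde f$ a normal element of $\operatorname{End}^0(A)$, hence diagonalizable over an algebraic closure, with all eigenvalues of absolute value $\sqrt q$), a standard eigenvalue argument shows: an element of $\operatorname{PGL}_n(D)$ under which every point of $\mathbb{P}^{n-1}(D)$ is periodic must have finite order in $\operatorname{PGL}_n(D)$. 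Equivalently, $f^{s_0} = c \cdot I_n$ for some $s_0 > 0$ and some $c \in Z(D)$.

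\textbf{Step 3: Conclusion via Koll\'ar--Zhuang.} A further iteration $f^{s_0 s_1}$ coincides with multiplication by an integer $m \geq 2$ on $A$: this is immediate when $Z(D) = \mathbb{Q}$, and in the CM case is obtained by exploiting the polarization relation $c \bar c = q \in \mathbb{Z}$ together with the finiteness of the polarized unit group modulo $\mathbb{Q}^\times$ in $Z(D)$, possibly after replacing $A$ by an isogenous abelian variety. Then \cite[Theorem~1]{KZ} gives $\mathrm{ed}(f^{s_0 s_1}) = \dim A$, proving the claim. The hardest part will be producing a rational-integer iterate in the CM case, where the structure of the polarized units of the CM field $Z(D)$ must be exploited carefully; a direct extension of the Koll\'ar--Zhuang argument to arbitrary central-scalar polarized endomorphisms would provide an alternative route.
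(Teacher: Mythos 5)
The central claim of Step~3---that some iterate of a central-scalar polarized self-isogeny coincides with a multiplication-by-integer map---is false, and the paper itself constructs the counterexample. In Example~\ref{ex: iteration does not factor through multiplication}, the endomorphism $f_\alpha$ of $A = E \times E$ (with $E = \mathbb{C}/\mathbb{Z}[i]$ and $\alpha = 4+3i$) is exactly a central-scalar polarized self-isogeny of the form $c\cdot I_n$ with $c\bar c = q$, yet $\alpha^k$ is never a rational integer and $f_\alpha^s$ never factors through any $m_A$ with $m\ge 2$ (Remark~\ref{rem: simple lemma to check kernel is cyclic}(2)). The finiteness of the unit group of the CM field does not help because $c$ is not a unit (here $N(c) = 25$); passing to an isogenous abelian variety also does not help since central-scalar multiplication is isogeny-equivariant. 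Thus you cannot reduce to \cite[Theorem~1]{KZ}. The ``alternative route'' you mention in passing---directly extending the Koll\'ar--Zhuang argument to central-scalar polarized endomorphisms---is precisely the content of the paper's argument, and it is not a minor extension: the paper bounds $\exp(\Ker(f^k))$ by $q^k|K(\mathcal{L})|$ via the polarization isogeny $\lambda_{\mathcal{L}}$ (Lemma~\ref{lem: rank main 1}), then shows that if $\mathrm{ed}(f^s)<\dim(A)$ then some $\Ker(f^s|_{B_s})$ acts faithfully and birationally on a rationally connected fiber of an Albanese fibration (\cite[Corollary~7]{KZ}), and invokes the Jordan property for $\Bir$ of rationally connected varieties (Prokhorov--Shramov, relying on Birkar's BAB theorem; Lemma~\ref{lem: uniform finite index embedding to general linear group}) to derive a contradiction for $s$ large (Lemma~\ref{lem: main lemma with all subtori f stable}). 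None of this is in your outline.

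There is a secondary issue in Step~1: the reduction to the isotypic case $A\cong B^n$ via ``isogeny-compatible behavior of essential dimension'' is not justified. Lemma~\ref{lem: facts of essential dimension}(5) gives an upper bound $\mathrm{ed}(f_1\times f_2)\le \mathrm{ed}(f_1)+\mathrm{ed}(f_2)$, but for incompressibility you need a matching lower bound, which does not hold formally and in fact is part of what the main theorem establishes. Indeed, the paper derives Corollary~\ref{cor: simple main} (the case of products of pairwise non-isogenous simple abelian varieties) from the main theorem, not the reverse.

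The parts of your proposal that do align with the paper: the reduction to a self-isogeny by translation, and the upgrade from preperiodicity to periodicity and then to simultaneous stability under an iterate (this is Lemma~\ref{lem: uniform stable index}). The scalar-structure observation in Step~2 is also close to Lemma~\ref{lem: basic property of polarized end of product abelian} for products of elliptic curves; the argument that a normal element of $\operatorname{End}^0(A)$ stabilizing all one-dimensional $D$-submodules is central is correct once uniform periodicity is established. But the endpoint of that observation is a central-scalar map, which is not yet a multiplication-by-integer map, and the gap from the former to incompressibility is where the real work lies.
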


We remark that the assumption `every subtorus of $A$ is $f$-preperiodic up to translation' naturally arises from the original version of the algebraic dynamical Manin--Mumford conjecture \cite[Conjecture~2.1]{GTZ}. We say $f$ satisfies the \emph{dynamical Manin--Mumford condition} if the following holds: for any subvariety $Y$ of $X$, the subvariety $Y$ is $f$-preperiodic if and only if the $f$-preperiodic points of $X$ are Zariski dense in $Y$ (see Definition~\ref{defn: dynamical Manin Mumford condition}). Our assumption in Theorem~\ref{thm: ed main} is equivalent to the dynamical Manin--Mumford condition up to translation (see Lemma \ref{lem: dynamical Manin Mumford condition on abelian varieties}). Hence, the following holds (Theorem~\ref{thm: dynamical Manin Mumford condition}): 

\medskip

\emph{If a polarized self-isogeny $f$ of an abelian variety satisfies the dynamic Manin--Mumford condition, then $f^s$ is incompressible for some positive integer $s$}.

\medskip

We also remark that if we only require $f$ to be int-amplified in Theorem~\ref{thm: ed main}, then counterexamples exist (see Example~\ref{ex: cyclic after iteration example int-amplified}).

Under the assumption that $A$ is a simple abelian variety, Koll\'ar and Zhuang \cite[Corollary 17]{KZ} show that $\mathrm{ed}(f) = \min\{\dim(A), \rank \Ker(f)\}$ when $\deg(f)$ is coprime to $(\dim(A))!$. As a complementary result, Theorem~\ref{thm: ed main} implies the following since the $f: A \to A$ below maps every subtorus of $A$ to a translation of itself; see Lemma \ref{lem: numerically parallel abelian varieties}.

\begin{corollary}\label{cor: simple main}
    Let $A$ be an abelian variety that is isogenous to the product of pairwise non-isogenous simple abelian varieties. Let $f$ be a polarized endomorphism of $A$. Then $f^s$ is incompressible for some positive integer $s$.
\end{corollary}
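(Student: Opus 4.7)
The plan is to verify the hypothesis of Theorem~\ref{thm: ed main}, namely that every subtorus of $A$ is $f$-preperiodic up to translation; the corollary then follows at once. By Remark~\ref{rem: kernel and incompressible}(4) I may compose $f$ with a suitable translation and assume $f$ is a self-isogeny (i.e.\ a group homomorphism). Fix an isogeny $\pi : \tilde A := B_1 \times \cdots \times B_k \to A$, with $B_1,\ldots,B_k$ pairwise non-isogenous simple abelian varieties.

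The first step is to classify the subtori of $A$. Let $C \subset \tilde A$ be any simple abelian subvariety. For each $j$, the restriction $p_j|_C : C \to B_j$ is either zero or an isogeny (both $C$ and $B_j$ are simple, so the identity components of the kernel and image are each trivial or everything). At least one $p_j|_C$ is an isogeny (else $C = 0$), and for this index $j$ the map $p_\ell|_C$ cannot be an isogeny for $\ell \neq j$, for otherwise $B_j$ and $B_\ell$ would both be isogenous to $C$, contradicting the pairwise non-isogeny. Hence $p_\ell|_C = 0$ for $\ell \neq j$, which forces $C \subset B_j$ and then $C = B_j$ by dimension. Combining this with Poincaré complete reducibility and pairwise non-isogeny, every abelian subvariety of $\tilde A$ is a unique sub-product $\prod_{i \in I} B_i$. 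Pushing forward through $\pi$, the subtori of $A$ are precisely $T_I := \pi\bigl(\prod_{i \in I} B_i\bigr)$ for $I \subset \{1,\ldots,k\}$.

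The second step is to verify $f(T_I) = T_I$ for each $I$. Since $f$ is a self-isogeny, $f(T_I)$ is a subtorus of $A$ of the same dimension as and isogenous to $T_I$. Writing $f(T_I) = T_{I'}$, the products $\prod_{i \in I} B_i$ and $\prod_{i \in I'} B_i$ must be isogenous, which, by the pairwise non-isogeny of the $B_i$ and uniqueness of Poincaré decomposition, forces $I = I'$. So every subtorus of $A$ is $f$-invariant, in particular $f$-periodic with period $1$ and hence $f$-preperiodic up to translation; Theorem~\ref{thm: ed main} then yields the desired conclusion.

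The main substantive step is the subtorus classification, and it is precisely where pairwise non-isogeny is essentially used; when a simple factor occurs with multiplicity greater than one, the set of subtori becomes an infinite family (Grassmannian-type) and $f$ may shuffle them without preperiodicity, as illustrated by the counterexamples already exhibited in the paper. Given the main theorem, no other serious obstacle arises.
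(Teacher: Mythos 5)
Your proposal is correct and takes essentially the same approach as the paper: the paper dispatches the corollary with a one-line remark (just before the statement) that $f$ sends every subtorus of $A$ to a translation of itself, citing Lemma~\ref{lem: numerically parallel abelian varieties}, and then invokes Theorem~\ref{thm: ed main}; you simply supply the details of why that remark holds, by classifying the subtori of $A$ as the coordinate sub-products $T_I$ and observing that pairwise non-isogeny forces $f(T_I) = T_I$.
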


One of the key ingredients in the proof of Theorem~\ref{thm: ed main} is the Jordan property for birational automorphism groups of rationally connected varieties of bounded dimension, which was proved by Prokhorov and Shramov (\cite[Theorems~1.8 and 1.10]{PS14}) assuming the Borisov–Alexeev–Borisov conjecture (now a theorem of Birkar \cite{Bir21}).

When $f$ is $q$-polarized, our proof of Theorem~\ref{thm: ed main} yields an explicit bound for the iteration index (see Theorem~\ref{thm: main with all subtori f numerically periodic}): $$s>\log_q(J)+\dim(A)\cdot \log_q |K(\mathcal{L})|.$$ Here $J$ is a constant depending only on the dimension of $A$, $f$ is $q$-polarized with respect to an ample line bundle $\mathcal{L}$, and $|K(\mathcal{L})|=\chi(\mathcal{L})^2=(\frac{1}{\dim(A)!}\cdot \mathrm{Vol}(\mathcal{L}))^2 = (\frac{1}{\dim(A)!}\cdot \mathcal{L}^{\dim(A)})^2$ is the degree of the polarization map induced by $\mathcal{L}$.

\medskip

In particular, for abelian varieties that are isogenous to products of some elliptic curves, we have the following.

\begin{theorem}[{Corollary~\ref{cor: essential dimension isogenous to product elliptic curves periodic}}]\label{thm: essential dimension isogenous to product elliptic curves intro}
    There exists a positive integer $s$ depending only on the dimension $n$ that satisfies the following.

    Let $A$ be an abelian variety that is isogenous to the product $E_1\times \cdots \times E_n$ of elliptic curves, and let $f$ be a polarized endomorphism of $A$. Assume that every one-dimensional subtorus of $A$ is $f$-stable up to translation. Then $f^s$ is incompressible. Furthermore, for any prime $p$ dividing $\deg(f)$, the endomorphism $f$ itself satisfies
    $$\mathrm{ed}(f)\geq \frac{p-1}{p} \dim(A).$$
\end{theorem}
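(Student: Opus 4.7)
My plan proceeds in two main stages: first, verifying the hypothesis of Theorem~\ref{thm: ed main} to obtain incompressibility of $f^s$, and then establishing the fractional lower bound on $\mathrm{ed}(f)$ via a structural analysis combined with Koll\'ar--Zhuang's incompressibility of multiplication-by-$p$.

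First, by Remark~\ref{rem: kernel and incompressible}(4) I may replace $f$ by $t_{-f(0)}\circ f$ and assume $f$ is a group homomorphism; essential dimension and the polarization are unaffected. The hypothesis then sharpens: if $E\subset A$ is a $1$-dim subtorus with $f(E)=E+x$, then $f(E)$ is a subgroup (as the homomorphic image of $E$), which forces $x\in E$ and $f(E)=E$ setwise. Since $A$ is isogenous to $E_1\times\cdots\times E_n$, every subtorus of $A$ is a finite sum of $1$-dim subtori (an easy consequence of Poincar\'e reducibility applied inside the product), so $f$ preserves every subtorus $B\subset A$; in particular every subtorus is $f$-preperiodic up to translation with period $1$. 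Theorem~\ref{thm: ed main} then yields some $s$ with $f^s$ incompressible. For the uniformity of $s$ in $n$ only, I invoke the explicit bound $s>\log_q J+n\log_q|K(\mathcal{L})|$ from Theorem~\ref{thm: main with all subtori f numerically periodic}: the Jordan constant $J$ depends only on $n$, and a suitable isogeny reduction to a principally polarized target (at the cost of further iterating $f$ by an amount depending only on $n$) makes $|K(\mathcal{L})|=1$, so the iteration index $s$ becomes uniform in $n$.

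For the lower bound $\mathrm{ed}(f)\geq\frac{p-1}{p}n$, I analyze $f$ more finely. Grouping the $E_i$ by isogeny class gives $A\sim\prod_j E_{\alpha_j}^{k_j}$ with pairwise non-isogenous $E_{\alpha_j}$. Inside each $E_{\alpha_j}^{k_j}$, the $1$-dim subtori form a positive-dimensional family parametrized by $\mathbb{P}^{k_j-1}$ over $\operatorname{End}(E_{\alpha_j})\otimes\mathbb{Q}$, and the requirement that $f$ preserve all of them forces the restriction of $f$ to $E_{\alpha_j}^{k_j}$ to be scalar multiplication $a_j\cdot\operatorname{id}$ by some $a_j\in\operatorname{End}(E_{\alpha_j})$ with $N(a_j)=q$. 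Since $p\mid\deg(f)=q^n$ gives $p\mid N(a_j)$, in $\operatorname{End}(E_{\alpha_j})$ the element $a_j$ is divisible by a prime $\mathfrak{p}_j$ of norm $p$ (split or ramified case) or by $p$ itself (inert case). This yields an explicit $(\mathbb{Z}/p)^{k_j}$-sub-cover in $\Ker(f)$ from the scalar action of $\mathfrak{p}_j$ (or $p$) on $E_{\alpha_j}^{k_j}$. Combining this with Koll\'ar--Zhuang's incompressibility $\mathrm{ed}(m_p)=n$ via the inequality $\mathrm{ed}(f)\ge\mathrm{ed}_p(f)$ and a Karpenko--Merkurjev-type analysis of the resulting $p$-elementary cover then yields the bound $\frac{p-1}{p}n$.

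The main obstacle is the extraction of the exact factor $\frac{p-1}{p}$: the $1/p$ deficit arises precisely from the split/ramified case, where on each isogeny-class block of dimension $k_j$ one obtains only a $(\mathbb{Z}/p)^{k_j}$-sub-cover rather than $E_{\alpha_j}^{k_j}[p]\cong(\mathbb{Z}/p)^{2k_j}$, which would give a full multiplication-by-$p$ factor and hence a bound of $n$. Balancing this across isogeny classes and transferring the resulting $\mathrm{ed}_p$ estimate back to $\mathrm{ed}$ is expected to be the most delicate part of the argument.
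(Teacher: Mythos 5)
Your proposal contains genuine gaps in both stages, and in each case the paper takes a cleaner route that avoids the difficulty you run into.

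For the incompressibility of $f^s$: your reduction to the case where $f$ is a self-isogeny stabilizing every subtorus is fine, but the step where you invoke ``a suitable isogeny reduction to a principally polarized target'' in order to force $|K(\mathcal{L})|=1$ is not justified. An isogeny $\pi: A \to A'$ does not in general commute with $f$, so a polarized endomorphism of $A$ does not transfer to one of $A'$, and even if it did, it is unclear why the cost in iteration would be bounded in terms of $n$ alone. The paper sidesteps this entirely: Lemma~\ref{lem: basic property of polarized end of product abelian} shows that when every $1$-dimensional subtorus is $f$-stable, $f$ is represented on the universal cover by a block-scalar matrix $\mathrm{diag}(\lambda_1 I_1,\dots,\lambda_k I_k)$, and the complex-conjugate matrix $\mathrm{diag}(\bar\lambda_1 I_1,\dots,\bar\lambda_k I_k)$ also defines a self-isogeny $\bar f$ of $A$ (because $\lambda_i+\bar\lambda_i\in\Z$, by Lemma~\ref{lem: complex structure of endomorphism ring of abelian variety}). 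Then $\bar f\circ f = q\cdot\mathrm{id}$, so $\exp(\Ker f)\mid q$, and Lemma~\ref{lem: main lemma with all subtori f stable} applies with $C=1$; the resulting bound $s>\log_q J$ is at most $\lceil\log_2 J\rceil$ and depends only on $n$. No isogeny reduction or principal polarization is needed.

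For the lower bound $\mathrm{ed}(f)\geq\frac{p-1}{p}n$: you are rederiving a special case of a result already available in black-box form. Your analysis of prime ideals of norm $p$ in $\operatorname{End}(E_{\alpha_j})$ has unresolved technical issues (non-maximal orders are not PIDs, so ``$a_j$ divisible by a prime of norm $p$'' needs care), and, as you acknowledge, the transfer back from $\mathrm{ed}_p$ to $\mathrm{ed}$ via a ``Karpenko--Merkurjev-type analysis'' is left unfinished. The paper instead applies \cite[Theorem~2]{KZ} directly: there exists a subtorus $B\subset A$ with
$\mathrm{ed}(f)\geq\dim(A)-\dim(B)+\frac{p-1}{p}\rank_p(\Ker f\cap B)$.
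Lemma~\ref{lem: basic property of polarized end of product abelian} (again via $\exp(\Ker f)\mid q$, now applied on $B$) gives $\rank_p\Ker(f|_B)\geq\dim(B)$, hence $\mathrm{ed}(f)\geq\dim(A)-\frac{1}{p}\dim(B)\geq\frac{p-1}{p}\dim(A)$. Your observation that the $\frac{p-1}{p}$ deficit traces back to split/ramified primes giving a $(\Z/p)^{k_j}$-cover rather than a $(\Z/p)^{2k_j}$-cover is a correct heuristic, but the inequality you need is precisely what \cite[Theorem~2]{KZ} already packages, so the delicate part you flag as remaining is not an unavoidable obstacle.
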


Let $\NS(A)$ be the N\'eron-Severi group of $A$ and ${\NS(A)_{K}} = \NS(A) \otimes K$ for $K = \Q, \R$. We remark that the assumption in Theorem~\ref{thm: essential dimension isogenous to product elliptic curves intro} is equivalent to that $f^*|_{\NS(A)_{\Q}}$ is a scalar multiplication; see Lemma \ref{lem: basic property of polarized end of product abelian}. Note also that the assumption in Theorem~\ref{thm: essential dimension isogenous to product elliptic curves intro} is necessary; see Example~\ref{ex: counterexample for stronger version of KZ question} and Proposition \ref{prop: essential dimension be one criterion} for counterexamples without this condition.

We also remark that the constant $s$ in Theorem~\ref{thm: essential dimension isogenous to product elliptic curves intro} can be chosen as $\lceil\log_2 J\rceil$, where $J$ is the Jordan constant of the birational automorphism groups of rationally connected varieties of dimension at most $n-1$ (see \cite[Theorems~1.8 and 1.10]{PS14}).

For the pair $(A,f)$ in Theorem~\ref{thm: essential dimension isogenous to product elliptic curves intro}, if $\deg(f)$ has a prime divisor larger than $\dim(A)$, then $f$ is incompressible. As another simple corollary, we have $\mathrm{ed}(f)\geq \frac{1}{2}\dim(A)$ for all such pairs $(A,f)$.

\medskip

In dimension two, we prove that polarized endomorphisms of simple abelian surfaces are incompressible, with the possible exception of one case that is elaborated in Remark \ref{rem:2-pol}.

\begin{theorem}\label{thm: simple abelian surface uniform index}
    Let $A$ be a simple abelian surface and let $f$ be a polarized endomorphism of $A$. Then $f$ is either incompressible or $2$-polarized.
\end{theorem}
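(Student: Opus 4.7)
My plan is to combine Theorem~\ref{thm: ed main} (which applies vacuously to simple $A$) with Koll\'ar--Zhuang's Corollary~17 and a uniform kernel-rank calculation, and then to address the residual even case.

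By Remark~\ref{rem: kernel and incompressible}(4), I may assume $f$ is a self-isogeny. Since $A$ is simple, its only subtori are $\{0\}$ and $A$, both $f$-invariant, so Theorem~\ref{thm: ed main} already guarantees incompressibility of some iterate $f^s$. The goal is to push $s$ down to $1$ whenever $q \geq 3$.

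For the uniform kernel fact: the relation $f^*H \sim qH$ translates, via the Rosati involution $^\dagger$ attached to $H$, into $f^\dagger \circ f = q \cdot \mathrm{id}_A$ in $\mathrm{End}^{0}(A)$; after passing to an isogenous principally polarized model if necessary, this gives $\Ker f \subset A[q]$. Since $|\Ker f| = \deg f = q^{\dim A} = q^{2}$ while every cyclic subgroup of $A[q] \cong (\mathbb{Z}/q\mathbb{Z})^{4}$ has order at most $q$, $\Ker f$ cannot be cyclic, so $\rank \Ker f \geq 2$. When $q$ is odd, $\deg f = q^{2}$ is coprime to $(\dim A)! = 2$, and Koll\'ar--Zhuang's Corollary~17 then yields $\mathrm{ed}(f) = \min\{\dim A,\, \rank \Ker f\} = 2$, proving $f$ incompressible.

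The main obstacle will be the case of even $q \geq 4$, where the Koll\'ar--Zhuang parity hypothesis fails. The plan there is to exploit Albert's classification of $\mathrm{End}^{0}(A)$ for simple abelian surfaces (one of $\mathbb{Q}$, a real quadratic field, an indefinite rational quaternion algebra, or a quartic CM field) together with the norm equation $f^\dagger f = q$. Either this forces a factorization $f = g \circ m_A$ with $m \geq 2$, in which case the original Koll\'ar--Zhuang incompressibility of multiplication maps combined with monotonicity of essential dimension along composition yields $\mathrm{ed}(f) \geq \mathrm{ed}(m_A) = 2$; or, otherwise, the explicit iteration bound $s > \log_q J + \dim(A) \cdot \log_q |K(\mathcal{L})|$ from Theorem~\ref{thm: main with all subtori f numerically periodic} descends to $s = 1$ via polarization-degree estimates specific to simple abelian surfaces.
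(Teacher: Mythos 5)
Your overall skeleton—establish $\exp(\Ker f)\mid q$, deduce $\rank\Ker f\geq 2$, then convert this into an essential-dimension bound—does match the paper's route through Proposition~\ref{prop: polarized rank simple surface} and Lemma~\ref{lem: main lemma with all subtori f stable}, but the central technical claim has a real gap. From $f^*H\equiv qH$ you correctly get $f^{\dagger}\circ f = q\cdot\mathrm{id}$, but this is an identity in $\mathrm{End}^{0}(A)=\mathrm{End}(A)\otimes\Q$: the Rosati adjoint $f^{\dagger}=\lambda_H^{-1}\circ f^{\vee}\circ\lambda_H$ is an actual endomorphism only when $\lambda_H$ is an isomorphism, i.e. when $H$ is a principal polarization. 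In general only $|K(H)|\cdot f^{\dagger}$ is an endomorphism, which yields $\exp(\Ker f)\mid q\cdot|K(H)|$—exactly the bound of Lemma~\ref{lem: rank main 1}, and exactly why the general Theorem~\ref{thm: main with all subtori f numerically periodic} needs the iteration index $s>\log_q J+\dim A\cdot\log_q|K(\mathcal{L})|$ rather than $s=1$. Your fix, ``pass to an isogenous principally polarized model,'' is not a valid reduction: an isogeny changes $\Ker f$ (and $f$ need not descend or lift along an arbitrary isogeny at all), and essential dimension is not an isogeny invariant. The paper establishes $\exp(\Ker f)\mid q$ for simple abelian surfaces by going through Albert's four types of $\mathrm{End}^{0}(A)$ and showing the characteristic polynomial of the analytic representation of $f$ lies in $\Z[x]$ with constant term $\pm q$, whence Cayley--Hamilton kills $\Ker f$ by multiplication by $q$ (Proposition~\ref{prop: polarized rank simple surface}, via Lemmas~\ref{lem: complex structure of endomorphism ring of abelian variety} and~\ref{lem: real characteristic polynomial}).

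Granting $\exp(\Ker f)\mid q$, your odd-$q$ finish via Koll\'ar--Zhuang's Corollary~17 is a correct and genuinely different route from the paper's, which instead feeds $C=1$ and $J=2$ (the Jordan constant of $\mathrm{PGL}(2)$, Remark~\ref{rem: dim two case choose the constant J}) into Lemma~\ref{lem: main lemma with all subtori f stable} to get incompressibility of $f^s$ for every $s>\log_q 2$; since $\log_q 2<1$ once $q\geq 3$, this handles odd and even $q$ uniformly, so the odd/even split is unnecessary. More seriously, your proposed even-$q\geq 4$ argument does not close: the implication ``$\exp(\Ker f)\mid q$ forces $f=g\circ m_A$'' has the containment reversed ($f$ factors through $m_A$ if and only if $A[m]\subseteq\Ker f$, not $\Ker f\subseteq A[q]$), and the alternative ``the iteration bound descends to $s=1$ via polarization-degree estimates'' is just restating the goal—descending it to $s=1$ requires exhibiting $C=1$ in Lemma~\ref{lem: main lemma with all subtori f stable}, which is precisely the unproved assertion $\exp(\Ker f)\mid q$.
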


\medskip

Recall that for a finite abelian group $G$ and a prime $p$, the \emph{$p$-rank} of $G$ is defined as $\rank_p G:=\rank (G/pG)$. If $G_p$ denotes the Sylow $p$-subgroup of $G$, then $\rank_p G=\rank (G_p)$.

Motivated by Remark~\ref{rem: kernel and incompressible} and Example \ref{ex: counterexample for KZ question}, it is also natural to ask whether the rank (or $p$-rank) of the kernel of a polarized self-isogeny $f$ of an abelian variety $A$ is at least $\dim(A)$ after iteration. We answer this affirmatively for all abelian varieties of arbitrary dimension.

\begin{theorem}[Theorem~\ref{thm: rank main 1}]\label{thm: rank main}
Let $f$ be a polarized self-isogeny of an abelian variety $A$. Then for any prime $p\mid \deg(f)$, we have $\rank_p \Ker(f^s)\geq \dim(A)$ for some positive integer $s$.
\end{theorem}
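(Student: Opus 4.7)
The plan is to pass to the $p$-adic Tate module $T_p(A)$ and exploit the symmetry of the eigenvalues of $f$ forced by the polarization. Writing $n = \dim(A)$, we have $T_p(A) \cong \mathbb{Z}_p^{2n}$ and $A[p^\infty] \cong V_p(A)/T_p(A)$, where $V_p(A) := T_p(A) \otimes_{\mathbb{Z}_p} \mathbb{Q}_p$; since $f$ is an isogeny the induced map $M_f \in \operatorname{End}_{\mathbb{Z}_p}(T_p(A))$ is invertible on $V_p(A)$, so the problem reduces to $p$-adic linear algebra about $M_f$.

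The first step is to establish the Rosati symmetry of the eigenvalues of $M_f$. The hypothesis $f^*H \sim qH$ gives $\phi_{f^*H} = \hat f \circ \phi_H \circ f = q\phi_H$, and combined with the fact that $\hat f$ on $T_p(\hat A)$ has the same characteristic polynomial as $f$ on $T_p(A)$ (via the Weil-pairing identification $T_p(\hat A) \cong \Hom_{\mathbb{Z}_p}(T_p(A),\mathbb{Z}_p(1))$), this forces the $2n$ eigenvalues of $M_f$ to come in pairs $\{\alpha,\, q/\alpha\}$. Since $p \mid \deg(f) = q^n$, we have $v_p(q) \geq 1$, so in each pair at most one eigenvalue is a $p$-adic unit; hence at least $n$ of the $2n$ eigenvalues of $M_f$ have strictly positive $p$-adic valuation.

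The second step is to decompose $V_p(A) = V_0 \oplus V_{>0}$ into $f$-stable $\mathbb{Q}_p$-subspaces on which $f$ acts with eigenvalues of valuation $0$ and of valuation $> 0$ respectively (well defined over $\mathbb{Q}_p$ because the characteristic polynomial factors accordingly in $\mathbb{Q}_p[t]$). Setting $T_{>0} := T_p(A) \cap V_{>0}$, we get an $f$-stable saturated $\mathbb{Z}_p$-sublattice of rank $\dim V_{>0} \geq n$. Since every eigenvalue of $f|_{V_{>0}}$ has positive $p$-adic valuation, the iterates $(f|_{T_{>0}})^s$ converge to $0$ in the $p$-adic operator norm, so for $s$ sufficiently large we have $f^s(T_{>0}) \subseteq p\, T_{>0}$.

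To conclude, consider the finite subgroup $G := (1/p)T_{>0}/T_{>0}$ inside $A[p^\infty] = V_p(A)/T_p(A)$; it is isomorphic to $(\mathbb{Z}/p)^{\dim V_{>0}}$ and hence has $p$-rank at least $n$. For any $x \in (1/p) T_{>0}$ we have $f^s(x) \in (1/p) f^s(T_{>0}) \subseteq (1/p)(p T_{>0}) = T_{>0} \subseteq T_p(A)$, so $G \subseteq \Ker(f^s)$, yielding $\rank_p \Ker(f^s) \geq n = \dim(A)$. The main obstacle I anticipate is the clean derivation of the pairing $\{\alpha, q/\alpha\}$ with correct multiplicities from $\hat f \circ \phi_H \circ f = q\phi_H$; this requires careful bookkeeping with Poincaré duality for Tate modules, after which the remainder of the argument is elementary linear algebra over $\mathbb{Z}_p$.
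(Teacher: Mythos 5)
Your proof is correct, and it takes a genuinely different route from the paper's. Both arguments start from the Rosati-type relation $f^\vee\circ\lambda_{\mathcal{L}}\circ f=q\,\lambda_{\mathcal{L}}$ (your $\phi_H$ is the paper's polarization isogeny $\lambda_{\mathcal{L}}$), but they extract different consequences. The paper's Lemma~\ref{lem: rank main 1} argues multiplicatively: the relation shows $\lambda_{\mathcal{L}}$ carries $\Ker(f^s)$ into $A^\vee[q^s]$, hence $\exp(\Ker(f^s))$ divides $q^s|K(\mathcal{L})|$; since $|\Ker(f^s)|=q^{s\dim(A)}$, the structure theorem for finite abelian $p$-groups forces $\rank_p\Ker(f^s)\ge\dim(A)$ once $s$ exceeds an explicit threshold in $\log_p|K(\mathcal{L})|$. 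You instead pass to the $p$-adic Tate module: $T_p(f^\vee)$ has the same characteristic polynomial as $T_p(f)$ while being conjugate (via $T_p(\lambda_{\mathcal{L}})$) to $q\cdot T_p(f)^{-1}$, so the eigenvalue multiset of $T_p(f)$ is stable under $\alpha\mapsto q/\alpha$. Your ``pairing'' phrasing needs slight care when $\alpha^2=q$, but the count goes through: $\alpha\mapsto q/\alpha$ injects the valuation-zero eigenvalues (with multiplicity) into the positive-valuation ones, so at least $\dim(A)$ of the $2\dim(A)$ eigenvalues have positive $p$-adic valuation. Splitting $V_p(A)=V_0\oplus V_{>0}$ by valuation and shrinking the saturated sublattice $T_{>0}=T_p(A)\cap V_{>0}$ under iteration until $f^s(T_{>0})\subseteq p\,T_{>0}$ then exhibits an explicit $(\Z/p)^{\rank T_{>0}}$ inside $\Ker(f^s)$. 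The two methods are morally dual --- bound the exponent from above versus exhibit a large elementary $p$-subgroup directly --- and each buys something. Yours yields an iteration bound controlled by $\dim(A)$ alone (via Newton polygon slopes of the integral characteristic polynomial), while the paper's bound involves $\log_p|K(\mathcal{L})|$ and in particular collapses to $s=1$ for a numerically $f$-periodic principal polarization (Corollary~\ref{cor: explicit index s principal}); your lattice-shrinking argument does not recover $s=1$ directly, since before iterating the elementary divisors of $T_p(f)$ need not match the eigenvalue valuations (Jordan blocks over $\Z_p$).
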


As a direct consequence of Theorem~\ref{thm: rank main}, $\rank \Ker(f^s)\geq \dim(A)$ for some positive integer $s$. We note that the above rank bound is optimal (see Example~\ref{ex: optimal rank bound for rank main}). Moreover, if $A$ is principally polarized by a numerically $f$-periodic line bundle $\mathcal{L}$, then $\rank \Ker (f)\geq \dim(A)$ (see Corollary~\ref{cor: explicit index s principal}).

In dimension two, we also obtain the following uniform bound for the index $s$ in Theorem~\ref{thm: rank main}.

\begin{theorem}\label{thm: rank main uniform}
Let $A$ be an abelian surface and let $f$ be any polarized self-isogeny of $A$. Then the following statements hold. 
\begin{enumerate}
    \item There exists a positive integer $s$ depending only on $A$ such that $\rank \Ker (f^s)\geq 2$.
    \item If $A$ is simple, then for any prime $p\mid \deg(f)$, we have $\rank_p\Ker(f)\geq 2.$
\end{enumerate}
\end{theorem}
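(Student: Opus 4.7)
The plan is to prove part (2) first and then combine it with the classification of abelian surfaces to deduce part (1).

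For part (2), let $A$ be a simple abelian surface with polarized self-isogeny $f$ satisfying $f^* L \cong qL$, so $\deg(f) = q^2$ and $p \mid q$ whenever $p \mid \deg(f)$. The key lever is the containment $\Ker(f) \subset [q]^{-1}(K(L))$ obtained by taking kernels in the identity $\hat f \circ \phi_L \circ f = \phi_L \circ [q]$. Letting $N_L$ denote the exponent of the finite group $K(L)$, this gives $\Ker(f) \subset A[qN_L]$. When $L$ is principal ($N_L=1$), $\Ker(f) \subset A[q]$; since $|\Ker(f)[p^\infty]| = p^{2v_p(q)}$ sits inside $A[q][p^\infty] = (\Z/p^{v_p(q)})^4$, a direct cardinality count forces $\rank_p \Ker(f) \geq 2$, recovering Corollary~\ref{cor: explicit index s principal} in the surface case.

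For non-principal $L$, I would supplement with a Tate-module analysis. Identify $\Ker(f)[p^\infty]$ with the cokernel of $T_p(f)\colon T_p(A) \to T_p(A)$, so $\rank_p \Ker(f)$ equals the number of non-unit invariant factors of $T_p(f)$ over $\Z_p$. Assuming $\rank_p \Ker(f) \leq 1$ for contradiction, these invariant factors must be $(1,1,1,p^{2a})$ with $a = v_p(q) \geq 1$, so $\bar M := T_p(f) \bmod p$ has rank exactly $3$ and a single Jordan block of size $\geq 2$ at the eigenvalue $0$. The polarized condition forces the characteristic polynomial of $T_p(f)$ into the palindromic form $P(t) = t^4 + c_1 t^3 + c_2 t^2 + q c_1 t + q^2$; reducing mod $p$ gives $\bar P(t) = t^2(t^2 + \bar c_1 t + \bar c_2)$, so $0$ has algebraic multiplicity $\geq 2$ in $\bar P$, consistent with the Jordan-block observation. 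Simplicity of $A$ forces $P = m_f^r$ for the minimal polynomial $m_f$ of $f$ in the division algebra $\operatorname{End}^0(A)$, with $r \deg m_f = 4$. Writing $\mu$ for the multiplicity of $0$ as a root of $\bar m_f$, the Jordan block forces $t^{r\mu} \mid \bar m_f$, hence $r\mu \leq \mu$ and $r \leq 1$; this already rules out the trivial and quadratic-subfield cases ($r = 4$ and $r = 2$). For the remaining CM-quartic case ($r = 1$, $\deg m_f = 4$), I would use the additional structure of $T_p(A)$ as a rank-one module over $\OO_K \otimes \Z_p$ (where $\OO_K = \operatorname{End}(A)$), decompose according to the splitting of $p$ in $K$, and argue directly that $\OO_K/(f)$ cannot be $\Z_p$-cyclic of order $p^{2a}$ under the norm constraint $N_{K/\Q}(f) = q^2$ combined with $f \bar f = q$.

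For part (1), combine part (2) with the trichotomy for abelian surfaces. If $A$ is simple, $s = 1$ works by part (2). If $A$ is isogenous to $E_1 \times E_2$ with $E_1 \not\sim E_2$, every polarized self-isogeny lifts through the canonical isogeny to a decomposable pair $(f_1, f_2)$ with each $f_i$ a nonzero polarized isogeny of $E_i$, so $\Ker(f_1) \oplus \Ker(f_2)$ has rank at least $2$ already for $s = 1$. If $A$ is isogenous to $E \times E$, polarized self-isogenies correspond to matrices $M \in M_2(\operatorname{End}(E))$ satisfying a similitude condition, and Example~\ref{ex: counterexample for KZ question} shows $s \geq 2$ may be required. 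I would bound $s$ uniformly in $f$ by observing that the similitude group modulo its center is compact (a real form of the projective unitary group), so its integral points in $M_2(\operatorname{End}(E))$ modulo scalars form a finite group whose order depends only on $A$; hence some iterate $f^s$ with uniformly bounded $s$ becomes a scalar multiple of the identity, and $\Ker(f^s)$ is a full torsion subgroup of rank $4$.

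The main difficulty is the CM-quartic case in part (2): the Jordan-block argument only rules out $r \geq 2$, and the remaining irreducible palindromic quartic can a priori accommodate a $(1,1,1,p^{2a})$ Smith form. Ruling this out requires a careful ideal-theoretic analysis in the CM order $\OO_K$, exploiting the relation $f \bar f = q$ together with the precise splitting of $p$ in $K$, and is where I expect the most work to lie.
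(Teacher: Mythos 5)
Your overall plan (prove part (2) first, then reduce part (1) to a trichotomy) parallels the paper's structure, but both halves have genuine gaps.

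For part (2), your Tate-module and Jordan-block analysis is a reasonable reduction, but you yourself acknowledge that it does not resolve the quartic CM case ($r = 1$, $\deg m_f = 4$), which is precisely where the content lies: a priori an irreducible palindromic quartic can have Smith form $(1,1,1,p^{2a})$, and ruling this out is the theorem. The paper's route is more elementary and avoids Tate modules altogether. It splits $\operatorname{End}^0(A)$ according to Albert's classification; for Types I, II, III the analytic representation $M$ of $f$ has a \emph{real} characteristic polynomial (in Type II and III because $M$ is the matrix image of a quaternion over a real field, so $p_M(x)=x^2 - T(q_f)x + N(q_f)$ with $T, N$ real). Lemma~\ref{lem: real characteristic polynomial} then applies Cayley--Hamilton directly: $p_M(f) = 0$ forces $\det(M)\cdot x = 0$ for $x \in \Ker(f)$, so $\exp(\Ker(f)) \mid \det(M) = q$, and a crude cardinality count gives $\rank_p \geq 2$. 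Type IV is handled via Lemma~\ref{lem: complex structure of endomorphism ring of abelian variety} by showing $\lambda + \bar\lambda \in \Z$, again giving $\exp(\Ker(f)) \mid q$. No ideal-theoretic analysis of the CM order is needed.

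For part (1), your treatment of the $E\times E$ case rests on a false claim: the set of integral unitary similitudes in $M_2(\operatorname{End}(E))$ modulo scalars is \emph{not} finite. For instance with $\operatorname{End}(E)=\Z[i]$, the matrix $\operatorname{diag}(3+4i,\,4+3i)$ is a polarized similitude (both entries have norm $25$), and its projective class has infinite order because $(4+3i)/(3+4i)$ is a non-torsion element of the unit circle; more to the point, Example~\ref{ex: counterexample for stronger version of KZ question} already exhibits matrices $M=\begin{pmatrix}0&1\\-t&1\end{pmatrix}$ whose eigenvalue ratio is not a root of unity, so no iterate is scalar. The paper's Proposition~\ref{prop: picard num 4} achieves the uniform bound in $\rho(A)=4$ by a much more delicate argument: it uses Shioda--Mitani's explicit description $A\cong\C/\Lambda_\tau\times\C/\Lambda_{a\tau}$ to show that the eigenvalue ratio $\lambda_1/\lambda_2$ of any polarized $f$ with cyclic kernel lies in a \emph{finite set $\Gamma(A)$ of cardinality $\leq 8a^2$ depending only on $A$}; the pigeonhole principle then forces some iterate $f^s$ with $s \leq |\Gamma(A)|+1$ to become a scalar multiplication. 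Note also that the paper's case split is by Picard number rather than by isogeny type: $\rho\leq 2$ falls to Proposition~\ref{prop: rank main 1 uniform} (since $(f^2)^*$ is scalar on $\NS(A)$), $\rho=3$ covers both simple Type II surfaces and non-CM $E\times E$ via the real-characteristic-polynomial lemma, and only $\rho = 4$ needs the lattice argument. Your $E_1\times E_2$ (non-isogenous) observation is essentially correct in spirit but is subsumed into the $\rho\leq 2$ case and is not needed as a separate branch.
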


We remark that the proof of Theorem~\ref{thm: rank main uniform} differs substantially from that of Theorem~\ref{thm: rank main}, and it requires a detailed analysis of both the lattice of $A$ and the explicit structure of polarized endomorphisms of $A$ (see \cite{Alb34, Alb35, Shi63, SM74, Mur84, LB92}). Moreover, Theorem~\ref{thm: rank main uniform} actually holds over an algebraically closed field of characteristic zero by the Lefschetz principle, although our proof is over $\C$. We also remark that Theorem~\ref{thm: rank main uniform} plays a key role in the proof of Theorem~\ref{thm: simple abelian surface uniform index}.

\medskip

We end the introduction with a result for non-uniruled varieties.
A normal projective variety $V$ is {\it $Q$-abelian} if there is a quasi-\'etale finite surjective morphism $A \to V$ from an abelian variety $A$ (cf. \cite[Definition~2.13]{NZ}).

\begin{proposition}
\label{thm: quasi-abelian and non-uniruled}
Let $X$ be a non-uniruled normal projective variety and $f$ a polarized endomorphism of $X$. Then
$X$ is $Q$-abelian and $f$ lifts to a polarized endomorphism $f_A: A = A_1 \to A = A_2$ on an abelian variety $A$ via a quasi-\'etale finite Galois cover $\sigma: A \to X$, so that $A_1$ is the normalization of the fibre product of $f$ and $\sigma: A_2 \to X$. 

\par
In particular, $\mathrm{ed}(f^s) \ge \mathrm{ed}(f_A^s)$ for all $s \ge 1$.
\end{proposition}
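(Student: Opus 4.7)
The plan is to combine the structural result of Nakayama--Zhang with a fibre-product construction for the lift, and then deduce the essential dimension inequality from a general pullback argument.

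First, since $X$ is non-uniruled and admits the polarized endomorphism $f$, the cited theorem of \cite{NZ} provides a quasi-\'etale finite Galois cover $\sigma: A \to X$ from an abelian variety $A$; in particular $X$ is $Q$-abelian and $K_X \equiv 0$. The ramification formula $K_X \sim_{\mathbb{Q}} f^*K_X + R_f$ with effective $R_f \geq 0$ then forces $R_f \equiv 0$, hence $R_f = 0$, so $f$ itself is quasi-\'etale.

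Next, I form the fibre product $W := X \times_{f,X,\sigma} A_2$ (writing $A_2 = A$ for the target copy) with projections $\pi_X: W \to X$ and $\pi_A: W \to A_2$ satisfying $\sigma \circ \pi_A = f \circ \pi_X$. Let $A_1$ denote the normalization of a connected component of $W$, and set $\sigma_1 := \pi_X|_{A_1}$ and $f_A := \pi_A|_{A_1}$. Since $f_A$ is the base change of the quasi-\'etale $f$ along $\sigma$, it is \'etale in codimension one; because $A_2$ is smooth, purity of the branch locus (Zariski--Nagata) upgrades $f_A$ to an \'etale morphism, so $A_1$ is a connected \'etale cover of an abelian variety and is therefore itself an abelian variety. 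To conclude that $A_1 \cong A_2$ (rather than merely isogenous), I identify $\sigma_1: A_1 \to X$ with $\sigma: A_2 \to X$ as quasi-\'etale Galois covers of the $Q$-abelian $X$ by abelian varieties with the same Galois group, using the uniqueness of the abelian cover of a $Q$-abelian variety (most naturally from the Albanese construction of $A$ out of $X$). Fixing such an identification, if $H$ is ample on $X$ with $f^*H \sim qH$, then $f_A^*\sigma^*H = \sigma_1^*f^*H = q\sigma_1^*H = q\sigma^*H$, so $f_A$ is $q$-polarized.

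For the essential dimension inequality, fix $s \geq 1$ and suppose $f^s$ is birational to the pullback of a generically finite $g: X_0 \dashrightarrow Y_0$ along some $h: X \dashrightarrow Y_0$ with $\dim Y_0 = \mathrm{ed}(f^s)$. Composing with $\sigma$ gives $h \circ \sigma: A \dashrightarrow Y_0$; pulling $g$ back along this yields $A \times_{Y_0} X_0 \dashrightarrow A$, which is birational to $f_A^s$ because fibre products commute with the further base change by $\sigma$ and birationality is preserved under pullback by a dominant morphism. Hence $\mathrm{ed}(f_A^s) \leq \dim Y_0 = \mathrm{ed}(f^s)$, as desired. The main obstacle is the identification $A_1 \cong A_2$: a priori $A_1$ is only guaranteed to be \'etale-isogenous to $A_2$, and promoting this to a literal isomorphism of abelian varieties—so that $f_A$ becomes an honest endomorphism of a single $A$—requires the above uniqueness statement for the abelian cover of $X$.
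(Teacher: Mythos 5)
Your strategy is close to the paper's: establish $Q$-abelianness via Nakayama--Zhang, lift $f$ via a fibre product with $\sigma$, and deduce the essential dimension inequality from base change. The quasi-\'etaleness of $f$ (from $K_X\equiv 0$ plus the ramification formula), the purity argument showing $A_1$ is \'etale over $A_2$ and hence abelian, the polarization computation, and the pullback argument for the $\mathrm{ed}$ inequality are all sound ingredients.

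The gap you flag yourself is the real one, and it is exactly what the paper's citation resolves. For an \emph{arbitrary} quasi-\'etale finite Galois cover $\sigma: A_2\to X$ by an abelian variety, the projection $\sigma_1 := \pi_X|_{A_1}: A_1\to X$ is again a quasi-\'etale cover of the same degree with $A_1$ abelian, but nothing in your construction forces $\sigma_1$ to be isomorphic to $\sigma$ as a cover of $X$; a $Q$-abelian variety can carry inequivalent such covers. The paper sidesteps this by taking $\sigma$ to be the Albanese closure in codimension one of \cite[Lemma~2.12]{NZ} --- a canonical cover with a universal property --- and then invoking \cite[Proposition~5.4, Step~2]{NZ}, which shows the fibre-product cover $\sigma_1: A_1\to X$ is canonically identified with $\sigma: A_2\to X$, yielding a genuine endomorphism $f_A:A\to A$. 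Without that identification, $f_A: A_1\to A_2$ is only a morphism between two possibly distinct abelian covers of $X$, so the iterates $f_A^s$ are not even defined, and the polarization identity $f_A^*\sigma^*H\sim q\sigma^*H$ (which uses $\sigma_1^*H=\sigma^*H$) does not follow. Your instinct to route the identification through ``the Albanese construction of $A$ out of $X$'' is correct, but it needs to be carried out (or the relevant step of \cite{NZ} cited), not merely invoked as an expected uniqueness.
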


\vspace{2mm}

{\bf Acknowledgements.}
The authors thank S. Meng and Z. Zhuang for many helpful discussions and comments, and J. Xie, S.-W. Zhang, and G. Zhong for valuable suggestions and interest in this work. Luo, Oguiso, and Zhang are supported by a PTA fellowship of NUS; JSPS Grant-in-Aid (A) 25H00587, 25K21992 and NCTS scholar program; and ARF of NUS: A-8002487-00-00, respectively.
\medskip

\section{Preliminary results}
We adopt the standard notations as in \cite{Ful}, \cite{Har}, \cite{Laz} and \cite{Mum74}. We start with a definition of essential dimension in the classical context.

\begin{definition}[cf. {\cite[Definition~2.1]{BR97}}]
    Let $E/F$ be a finite field extension over a base field $k$. We say that $E/F$ is \emph{defined over} a subfield $F_0$ of $F$ if there exists an extension $E_0/F_0$ such that $[E_0:F_0]=[E:F]$, $E_0 \subseteq E$ and $E_0F = E$. The \emph{essential dimension} of $E/F$, which we will usually abbreviate as $\mathrm{ed}_k(E/F)$ (or $\mathrm{ed}(E/F)$ when $k$ is clear from the context), is the minimal value of $\mathrm{trdeg}_k(F_0)$ as $F_0$ ranges over all fields for which $E/F$ is defined over $F_0$. The \emph{essential dimension of a generically finite dominant rational map} $f: X\dasharrow Y$, denoted by $\mathrm{ed}(f)$, is defined as $\mathrm{ed}_k(K(X)/K(Y))$. Here $K(X)$ is the function field of an algebraic variety $X$.
\end{definition}

We collect the following facts on essential dimensions defined above.

\begin{lemma}\label{lem: facts of essential dimension}
We fix the base field $k$ of characteristic zero.
\begin{enumerate}
    \item If two generically finite dominant rational maps are birationally equivalent, then they have the same essential dimension. In particular, the essential dimension of a birational map is zero.
    \item Let $K/L$ be a finite field extension over $k$, and let $N/L$ be the Galois closure of $K$ over $L$. Then $\mathrm{ed}(N/L)=\mathrm{ed}(K/L)$.
    \item Let $f: X\dasharrow Y$ and $g: Y\dasharrow Z$ be generically finite dominant rational maps. Then $\max\{\mathrm{ed}(g),\mathrm{ed}(f)\}\leq \mathrm{ed} (g\circ f)$. In particular, $\mathrm{ed}(g_2\circ f\circ g_1)=\mathrm{ed}(f)$, where $g_1$ (resp. $g_2$) is a birational map on $X$ (resp. $Y$).
    \item Let $f: X \dasharrow Y$ be a generically finite dominant rational map such that the induced field extension $K(X)/K(Y)$ is abelian with Galois group $G$. Then $\mathrm{ed}(f) \leq \rank(G)$.
    \item Let $f_i \colon X_i \dasharrow Y$ ($i = 1, 2$) be generically finite dominant rational maps and let $f \colon X = X_1 \times_Y X_2 \dasharrow Y$ be the map induced by the fibre product. Then $\mathrm{ed}(f) \leq \mathrm{ed}(f_1) + \mathrm{ed}(f_2)$. In particular, for generically finite dominant rational maps $f_i:X_i\dashrightarrow Y_i$ ($i=1,2$), we have $\mathrm{ed}(f_1\times_k f_2:X_1\times_k X_2\dashrightarrow Y_1\times_k Y_2)\leq \mathrm{ed}(f_1)+\mathrm{ed}(f_2)$.
\end{enumerate}
\end{lemma}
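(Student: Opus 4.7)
The plan is to verify each item directly from the field-theoretic definition of essential dimension. Concretely, for $h:V\dasharrow W$ a descent is a pair $(F_0,E_0)$ with $F_0\subseteq K(W)$, $E_0/F_0$ a subextension of $K(V)/F_0$ satisfying $E_0\cdot K(W)=K(V)$ and $[E_0:F_0]=[K(V):K(W)]$, and $\mathrm{ed}(h)=\min\mathrm{trdeg}_k F_0$ over all such descents. Items (1) and (5) are essentially formal. For (1), two birationally equivalent maps yield the same field extension over $k$ up to isomorphism, and a birational $f$ corresponds to $K(X)=K(Y)$, which admits the trivial descent $(F_0,E_0)=(k,k)$. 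For (5), descents $(F_{0,i},E_i)$ of the $f_i$ combine to give a descent of the fibre product over the compositum $F_{0,1}\cdot F_{0,2}$, with transcendence degree at most $\mathrm{trdeg}_k F_{0,1}+\mathrm{trdeg}_k F_{0,2}$; the product case is obtained by viewing $f_1\times_k f_2$ as a fibre product over $Y_1\times_k Y_2$.

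For (2), the inequality $\mathrm{ed}(K/L)\leq \mathrm{ed}(N/L)$ follows by restricting a descent $(F_0,N_0)$ of $N/L$ to the intermediate subextension corresponding to $K$, with degrees controlled by separability (automatic in characteristic zero). For the reverse inequality, given a descent $(F_0,K_0)$ of $K/L$, take the Galois closure $N_0$ of $K_0$ over $F_0$ inside a fixed algebraic closure: then $N_0/F_0$ is separable, $N_0\cdot L$ is the Galois closure of $K_0\cdot L=K$ over $L$ (hence equals $N$), and $[N_0:F_0]=[N:L]$ by the compatibility of Galois closures with the base-change $F_0\subseteq L$. For (4), we apply Kummer theory: since $k$ is algebraically closed of characteristic zero it contains all roots of unity, and writing $G\cong \bigoplus_{i=1}^{r}\mathbb{Z}/n_i\mathbb{Z}$ with $r=\rank(G)$, the extension $K(X)/K(Y)$ is generated by radicals $\sqrt[n_1]{a_1},\dots,\sqrt[n_r]{a_r}$ with $a_i\in K(Y)^{\times}$; the subfield $F_0=k(a_1,\dots,a_r)\subseteq K(Y)$ has transcendence degree at most $r$, and together with $E_0=F_0(\sqrt[n_1]{a_1},\dots,\sqrt[n_r]{a_r})$ it yields a descent of $f$.

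The main obstacle is (3). The ``in particular'' clause is immediate from (1), since pre- and post-composition by the birational maps $g_1,g_2$ preserves the associated field extension up to $k$-isomorphism. For the genuine inequality $\max\{\mathrm{ed}(g),\mathrm{ed}(f)\}\leq\mathrm{ed}(g\circ f)$, the strategy is to start with a descent $(F_0,E_0)$ of $g\circ f$ (where $F_0\subseteq K(Z)\subseteq K(Y)$) and produce descents of both $f$ and $g$ of no greater transcendence degree. For $f$, natural candidates are $F_0':=E_0\cap K(Y)$ (of transcendence degree at most $\mathrm{trdeg}_k E_0=\mathrm{trdeg}_k F_0$) and $E_0':=E_0$; the containment $E_0\cdot K(Y)\supseteq E_0\cdot K(Z)=K(X)$ gives $E_0'\cdot K(Y)=K(X)$. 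The delicate point is verifying the degree equality $[E_0':F_0']=[K(X):K(Y)]$, which is equivalent to the linear disjointness of $E_0$ and $K(Y)$ over $E_0\cap K(Y)$ inside $K(X)$. This linear disjointness is not automatic even in characteristic zero, so the plan is to first reduce via (2) to the case where $K(X)/K(Z)$ is Galois and then use the subgroup-subfield correspondence to make the disjointness a consequence of the normal-closure construction. A symmetric argument, working with an intermediate subfield of $K(Z)$ over $F_0$, gives the descent of $g$.
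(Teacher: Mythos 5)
The reverse inequality in your proof of (2) contains a concrete error: the asserted degree identity $[N_0:F_0]=[N:L]$ is false in general. Take $K_0/F_0$ a non-Galois cubic with Galois closure $N_0$ of degree $6$ over $F_0$, and let $L$ be the unique quadratic subfield $F_0\subset L\subset N_0$. Then $K_0$ and $L$ are linearly disjoint over $F_0$ (so $(F_0,K_0)$ is a descent of $K:=K_0L$ over $L$), and $K=N_0=N$, yet $[N_0:F_0]=6>3=[N:L]$ because $N_0\cap L=L\neq F_0$; thus $(F_0,N_0)$ is \emph{not} a descent of $N/L$. There is no ``compatibility of Galois closures with base change'' to appeal to here. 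The conclusion survives because one can replace $F_0$ by $N_0\cap L$, which still has $\mathrm{trdeg}_k\le \mathrm{trdeg}_k F_0$, but as written the step fails; this is precisely why the paper simply cites \cite[Lemma~2.3]{BR97}.

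For (3) you correctly isolate the linear-disjointness obstruction, and ``reduce to Galois, then use the subgroup--subfield correspondence'' is the right outline (and is the paper's), but the actual engine is left unstated. After replacing $K(X)$ by its Galois closure over $K(Z)$ with $G=\mathrm{Gal}(K(X)/K(Z))$ and $H=\mathrm{Gal}(K(X)/K(Y))$, an arbitrary descent $E_0$ of $K(X)/K(Z)$ need not sit inside the Galois correspondence at all; what makes the degree work is \cite[Lemma~2.2]{BR97}: by taking the compositum of the $G$-translates of $E_0$ one gets a $G$-stable subfield $N\subset K(X)$ with faithful $G$-action, $NK(Z)=K(X)$, and $\mathrm{trdeg}_k N=\mathrm{ed}(K(X)/K(Z))$. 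Then $(N^G,N^H)$ is a descent of $K(Y)/K(Z)$, giving $\mathrm{ed}(g)\le \mathrm{ed}(g\circ f)$, which is what the paper proves. Without this $G$-stabilization, the Galois correspondence will not by itself force $E_0$ and $K(Y)$ to be linearly disjoint over $E_0\cap K(Y)$. One further point you gloss over: passing to the Galois closure over $K(Z)$ replaces $K(X)/K(Y)$ by a different extension, so the $f$-direction is not a verbatim mirror of the $g$-direction; e.g.\ one can obtain $\mathrm{ed}(f)\le\mathrm{ed}(g\circ f)$ by combining the descent $(N^H,N)$ of $K(X)^{\#}/K(Y)$ with the $g$-direction applied to the tower $K(Y)\subset K(X)\subset K(X)^{\#}$.
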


\begin{proof}
    (1) is clear from the definition of essential dimensions. (2) is from \cite[Lemma~2.3]{BR97}. (4) follows from \cite[Lemma~6.1]{BR97}, and (5) is from {\cite[Theorem~15]{KZ}} and that $f_1\times_k f_2$ is birational to the fibre product of $f_1 \times \id$ and $\id \times f_2$ (for the second part). For (3), it is clear that $\mathrm{ed}(f)\leq \mathrm{ed}(g\circ f)$. It suffices to prove the claim that $\mathrm{ed}(g)\leq \mathrm{ed}(g\circ f)$. 
    
    We consider the Galois closure $K(X)^\#$ of $K(X)$ over $K(Z)$. By (2), $\mathrm{ed}(K(X)^\#/K(Z))=\mathrm{ed}(K(X)/K(Z))$. Possibly replacing $K(X)$ with $K(X)^\#$, we may assume that $K(X)/K(Z)$ is Galois with Galois group $G$. Let $H$ be the Galois group of $K(X)/K(Y)$. By \cite[Lemma~2.2]{BR97}, there exists a $G$-invariant subfield $N$ of $K(X)$ on which $G$ acts faithfully, such that $K(X) = NK(Z)$ and $\mathrm{trdeg}_k(N)=\mathrm{ed}_k(K(X)/K(Z))$. Denote $N^G$ by $F_0$ and $E_0 = N^H$. Then $E_0 \subset K(Y)$, $[E_0 : F_0] = [G : H] = [K(Y) : K(Z)]$, and $E_0K(Z)=N^HK(Z)=(NK(Z))^H=K(X)^H=K(Y)$. It follows that $K(X)/K(Z)$ is defined over $F_0$. Thus $$\mathrm{ed}(K(Y)/K(Z))\leq \mathrm{trdeg}_k(F_0)\leq \mathrm{trdeg}_k(N)=\mathrm{ed}_k(K(X)/K(Z)).$$ This proves the claim and also the lemma.
\end{proof}


\begin{definition}\label{defn: q-polarized}
    Let $f: X\to X$ be a finite surjective endomorphism of a normal projective variety $X$.

    We say $f$ is \emph{$q$-polarized} or simply \emph{polarized}, if there exists an ample Cartier divisor (or equivalently, ample $\mathbb{R}$-Cartier divisor, see \cite[Lemma~3.5]{MZ18}) $H$ on $X$ such that $f^* H\sim qH$ (or equivalently, $f^*H\equiv qH$, see \cite[Lemma~2.3]{NZ}) for some integer (or equivalently, rational number, see \cite[Lemma~3.5]{MZ18}) $q > 1$.

    We say $f$ is \emph{numerically $q$-polarized} or simply \emph{numerically polarized}, if there exists an ample $\R$-Cartier divisor $H$ on $X$ such that $f^*H\equiv qH$ for some real number $q > 1$. Moreover, if $f$ is numerically $q$-polarized, then 
    $q^{\dim(X)} = \deg f$ (an integer) by the projection formula. So 
    $f^{\dim(X)}$ is $q^{\dim(X)}$-polarized.

    The above notions behave well under iterations. To be more specific, if $f^s$ is numerically polarized (resp. $q^s$-polarized for some integer $q > 1$) for some integer $s \ge 1$, then $f^t$ is numerically polarized for any integer $t \ge 1$ (resp. $f$ is $q$-polarized); see \cite[Proof of Theorem~2.7, Note 1]{Zh10}.
\end{definition}

\begin{definition}\label{defn: numerically parallel and periodic}
Let $X$ be a normal projective variety.

An $r$-cycle $D$ on $X$ is \emph{weakly numerically equal} to zero, denoted as $D \equiv_w 0$ if $D . L_1 \dots L_r = 0$ for all Cartier divisors $L_j$.
For two $r$-cycles $D_j$ ($j=1,2$), by $D_1 \equiv_w D_2$ we mean $D_1 - D_2 \equiv_w 0$. By \cite[Lemma 3.2]{Zh16}, a Cartier divisor $D \equiv_w 0$ if and only if $D \equiv 0$ (numerical equivalence).

We say two $r$-cycles $D_j$ are \emph{numerically parallel} if $D_1 \equiv_w cD_2$ for some $c \ne 0$. Let $f: X \to X$ be a finite surjective endomorphism. A subvariety $D$ is \emph{numerically $f$-periodic} (resp. {\it $f$-preperiodic}) if $f^s(D)$ and $D$ (resp. $f^r(D)$ and $f^s(D)$) are numerically parallel (viewed as cycles) for some positive integer $s$ (resp. non-negative integers $r\neq s$). In particular, if $f(D)$ is numerically parallel to $D$, then we say $D$ is \emph{numerically $f$-stable}. A Cartier divisor $L$ is \emph{numerically $f$-periodic} if $(f^s)^*(L)$ (or equivalently, $(f^s)_*L$) is numerically parallel to $L$ for some positive integer $s$; see the $(*)$ in Lemma \ref{lem: numerically f-periodic elliptic curve and quotient} for the equivalence.
\end{definition}

\begin{lemma}\label{lem: eigenvalues of polarized map}
    Let $X$ be an abelian variety, and let $f$ be a surjective endomorphism of $X$. Then the following statements are equivalent:
    \begin{enumerate}
        \item $f$ is numerically $q$-polarized.
        \item The induced action $f^*$ on $H^{1,0}(X,\mathbb{C})$ is diagonalizable whose eigenvalues are of the same modulus $\sqrt{q}>1$.
    \end{enumerate}
\end{lemma}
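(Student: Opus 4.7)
The plan is to pass to the universal cover and argue in linear-algebraic terms on $V = T_0 X$. Write $X = V/\Lambda$ and lift $f$ to a $\C$-linear endomorphism $F\colon V \to V$ with $F(\Lambda) \subseteq \Lambda$. Under the identification $H^{1,0}(X, \C) = H^0(X, \Omega^1_X) \cong V^*$, the operator $f^*$ becomes the transpose $F^T$; in particular $F$ and $f^*|_{H^{1,0}}$ share the same eigenvalues with multiplicity, and the diagonalizability of one is equivalent to that of the other. By the Appell--Humbert theorem, $\NS(X)_{\R}$ embeds into the real vector space of Hermitian forms on $V$, an $\R$-class is ample precisely when the corresponding Hermitian form is positive definite, and the action of $f^*$ is $H \mapsto H(F\,\cdot,\, F\,\cdot)$.

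For $(1)\Rightarrow(2)$, suppose $H \in \NS(X)_{\R}$ is ample with $f^*H \equiv qH$, i.e.\ $H(Fv, Fw) = q H(v, w)$ for all $v, w \in V$. Setting $G := F/\sqrt{q}$, this says $G$ is unitary with respect to the positive definite Hermitian inner product $H$. Unitary operators are diagonalizable with eigenvalues on the unit circle, so $F = \sqrt{q}\, G$ is diagonalizable with eigenvalues of modulus $\sqrt{q}$, and $q > 1$ is built into the definition of numerical polarization.

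For $(2)\Rightarrow(1)$, I would fix any ample $H_0 \in \NS(X)_{\R}$ (which exists since $X$ is projective) and form the Ces\`aro average
\[
H \;:=\; \lim_{N\to\infty}\, \frac{1}{N}\sum_{k=0}^{N-1} \frac{(f^*)^k H_0}{q^k}.
\]
On $H^{1,1}(X, \C)$ the operator $f^*$ has eigenvalues $\lambda_i \overline{\lambda_j}$, all of modulus $q$ by hypothesis (2), and is semisimple; restricting to the invariant subspace $\NS(X)_{\C}$, $T := f^*/q$ is therefore semisimple with all eigenvalues of modulus $1$, so the average converges to the projection of $H_0$ onto the $1$-eigenspace of $T$. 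Since $\NS(X)_{\R}$ is a closed $f^*$-invariant subspace, $H$ lies in $\NS(X)_{\R}$, and by construction $f^*H = qH$. The delicate step is positivity of $H$: since the eigenvalues of $G = F/\sqrt{q}$ all lie on the unit circle and $G$ is diagonalizable, $\|G^k v\|$ is bounded above and below by positive constants times $\|v\|$ in any fixed Hermitian norm, forcing $H_0(G^k v, G^k v) \geq c(v) > 0$ uniformly in $k$ for each $v \ne 0$; averaging preserves this bound, so $H$ is positive definite, hence ample, and $f$ is numerically $q$-polarized. The principal obstacle is producing the invariant Hermitian form \emph{inside} the subspace $\NS(X)_{\R}$ rather than merely among all Hermitian forms on $V$; this is why one starts the averaging from a class already known to lie in $\NS(X)_{\R}$.
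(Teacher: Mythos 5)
Your proof is correct, and the direction $(2)\Rightarrow(1)$ takes a genuinely different route from the paper. The paper cites \cite[Proposition~2.9]{MZ18} as a black box (together with \cite[Lemma~2.3(1)]{NZ} for the other direction), whereas you construct the invariant ample $\R$-class directly: fix any ample $H_0\in\NS(X)_\R$ and take the Ces\`aro average of $(f^*)^k H_0/q^k$. Since $T=f^*/q$ restricted to $\NS(X)_\C$ is semisimple with unit-modulus eigenvalues, the averages converge to the spectral projection of $H_0$ onto $\ker(T-\id)$; the averages lie in the closed subspace $\NS(X)_\R$, so the limit $H$ does too and satisfies $f^*H=qH$. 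The key step is positivity: writing $G=F/\sqrt q$ and diagonalizing $G=PDP^{-1}$ with $|D_{ii}|=1$, one gets the uniform bound $\|G^kv\|\ge\|v\|/(\|P\|\,\|P^{-1}\|)$, hence $((f^*)^kH_0/q^k)(v,v)=H_0(G^kv,G^kv)\ge c'\|v\|^2$ with $c'>0$ independent of $k$; this lower bound survives the averaging, so $H$ is positive definite, and in particular nonzero. This rules out the potential degeneracy (that $H_0$ might project to zero on the $1$-eigenspace), which is exactly the point you correctly flag as ``delicate.'' What your approach buys is a self-contained linear-algebraic argument on the universal cover via Appell--Humbert, at the cost of being a little longer; the paper's version is shorter by delegating to the cited proposition, which handles general normal projective varieties. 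One very small remark: you should either replace $f$ by $f-f(0)$ or note that the linear part of the affine lift is what acts on $H^{1,0}$ and $\NS$, since a general surjective endomorphism need not fix $0$; this is harmless but worth saying.
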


\begin{proof}

Since $X$ is abelian, we have $H^{1,1}(X)=H^{1,0}(X)\otimes H^{0,1}(X)$. 

Suppose that $f$ is numerically $q$-polarized. By \cite[Lemma~2.3(1) as well as the corresponding proof]{NZ}, there exists a positive number $q$ such that the action $\frac{1}{\sqrt{q}}f^*$ on $H^{0,1}(X)$ is unitary. This proves $(2)$.

Now, we prove the opposite direction. Suppose that $f^*$ is diagonalizable on $H^{1,0}(X)$ with eigenvalues $\lambda_i$ of the same modulus $\sqrt{q}$ for $1\leq i\leq \dim(X)$. Then the induced action of $f^*$ on $H^{1,1}(X)$ is diagonalizable with the eigenvalues $\lambda_i \bar \lambda_j$ for $1\leq i,j\leq \dim(X)$ that are of the same modulus $q\in \mathbb{R}$. By \cite[Proposition~2.9]{MZ18}, we have that $f$ is numerically $q$-polarized.
\end{proof}

Recall that if $B$ is a subvariety of an abelian variety $A$, then the Kodaira dimension $\kappa(B)$ of (any smooth model of) $B$ satisfies $\kappa(B) \ge 0$, and $\kappa(B) = 0$ if and only if $B$ is a translation of a subtorus of $A$; see \cite[Theorem~10.3]{Ue75} or \cite[Corollary~3.5]{Mor87}.

Lemma~\ref{lem: numerically parallel abelian varieties} below says that if two subtori of an abelian variety are numerically parallel, then they are identical.

\begin{lemma}\label{lem: numerically parallel abelian varieties}
    Let $A$ be an abelian variety. Let $B, B'$ be two subvarieties of $A$ of the same dimension such that the Kodaira dimensions $\kappa(B)=\kappa(B')=0$. Then:
\begin{enumerate} 
\item[(1)]
Both $B$ and $B'$ are translations of subtori of $A$. 
\item[(2)] 
Let $f: A \to A$ be a finite surjective endomorphism. Then $f(B)$ is a translation of a subtorus.

\par \noindent
Moreover, the following are equivalent.

\item[(3i)] $B'\equiv_w cB$ for some $c \ne 0$.
\item[(3ii)] 
$B$ and $B'$ are translations of the same subtorus in $A$ (In particular, $B = B'$ when $0_A\in B\cap B'$).
\item[(3iii)] 
$B'$ is a fiber of the quotient map $A\to Y$ whose fibers are translations of $B$.
\end{enumerate}
\end{lemma}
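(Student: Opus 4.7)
Parts (1) and (2) are routine: (1) follows directly from the cited Ueno--Mori theorem. For (2), I would write $f = \phi + f(0)$ with $\phi: A \to A$ a group endomorphism (by rigidity for morphisms of abelian varieties), so $f(a + T) = (\phi(a) + f(0)) + \phi(T)$, and $\phi(T)$ is a subtorus as a connected subgroup. Among the equivalences in (3), the implication (3ii) $\Leftrightarrow$ (3iii) is immediate since the fibers of $A \to A/T$ are exactly the translates of $T$. The implication (3ii) $\Rightarrow$ (3i) follows because translations on $A$ are homotopic to the identity and hence preserve numerical classes, so $B$ and $B'$ (both translates of $T$) satisfy $B \equiv_w B'$ with $c = 1$.

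For the substantive direction (3i) $\Rightarrow$ (3ii), I plan to induct on $\dim A$, with the base case $\dim A = 1$ trivial. After writing $B = a + T$, $B' = a' + T'$, translation-invariance of intersection numbers reduces the hypothesis to $T' \equiv_w cT$. Assuming for contradiction $T \ne T'$, consider the quotient $\pi: A \to A/T$. Since $\pi|_T$ is constant, $T \cdot (\pi^* \bar L)^r = 0$ for any ample $\bar L$ on $A/T$, so by the projection formula the hypothesis yields $\pi_*(T') \equiv_w 0$ on $A/T$. In either dimensional regime ($r > g/2$ forcing positive-dimensional fibers for $\pi|_{T'}$ automatically, or $r \le g/2$ where intersection against an ample $\bar L$ would make $\pi_*T'$ a positive $r$-cycle with nonzero top self-intersection), this forces $\pi|_{T'}$ to have positive-dimensional fibers, hence $\dim(T \cap T') > 0$. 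Set $T_0 := (T \cap T')^0$ of dimension $s$ with $0 < s \le r$; if $s = r$ then $T_0 = T = T'$, so assume $s < r$ and pass to $A_0 := A/T_0$ with quotient $\pi_0$ and subtori $\bar T := T/T_0$, $\bar T' := T'/T_0$ of dimension $r - s \ge 1$.

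The crucial descent step is to derive $\bar T \equiv_w c \bar T'$ in $A_0$. I would test the hypothesis on mixed divisor products $(\pi_0^* \bar L)^{r-s} \cdot M^s$, where $\bar L$ ranges over divisors on $A_0$ and $M$ is a fixed ample divisor on $A$. Since $T_0 \subset T \cap T'$ is the generic fiber of both $\pi_0|_T$ and $\pi_0|_{T'}$, one obtains the identity
\[
T \cdot (\pi_0^* \bar L)^{r-s} \cdot M^s = (\bar L|_{\bar T})^{r-s} \cdot (M|_{T_0})^s,
\]
and the analogue for $T'$ with $\bar T$ replaced by $\bar T'$. Cancelling the nonzero factor $(M|_{T_0})^s$ (positive since $M|_{T_0}$ is ample on $T_0$) and polarizing the resulting degree-$(r-s)$ polynomial identity in $\bar L$ yields $\bar T \equiv_w c \bar T'$ on $A_0$. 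Since $\dim A_0 = g - s < g$, the inductive hypothesis gives $\bar T = \bar T'$, whence $T = \pi_0^{-1}(\bar T) = \pi_0^{-1}(\bar T') = T'$, a contradiction. The main obstacle I expect is this descent identity: carefully isolating the contributions of $\bar L$ and $M$ in the mixed intersection via the smoothness of $\pi_0|_T$ and $\pi_0|_{T'}$ with common fiber $T_0$, and confirming that the polarization step recovers the full $\equiv_w$ statement on $A_0$ rather than only the diagonal self-intersection equality.
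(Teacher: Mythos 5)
Your proof is correct, but you take a genuinely different and considerably longer route for the key implication (3i) $\Rightarrow$ (3ii) than the paper does. The paper's argument is a one-step contradiction: after translating so that $B$ is a subtorus, consider $\pi: A \to A/B$; if $\pi(B')$ is not a point, pick a very ample $H_Y$ on $A/B$ away from $\pi(B)$ and ample $H_1,\dots,H_{r-1}$ on $A$, and observe that $cB \cdot \pi^*H_Y \cdot H_1\cdots H_{r-1} = 0$ (since $\pi^*H_Y|_B$ is trivial) while $B' \cdot \pi^*H_Y \cdot H_1\cdots H_{r-1} > 0$ (since $B' \cdot \pi^*H_Y$ is an effective cycle dominating $\pi(B') \cap H_Y$), which is already a contradiction without any induction or descent. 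Your inductive argument quotienting by $T_0 = (T\cap T')^0$ also works: the descent identity you worried about is indeed valid --- applying the projection formula to the $T_0$-bundle $\pi_0|_T: T \to \bar T$ gives $(\pi_0|_T)_*((M|_T)^s) = ((M|_{T_0})^s)\,[\bar T]$, and so $T \cdot (\pi_0^*\bar L)^{r-s}\cdot M^s = (M|_{T_0})^s \cdot (\bar L|_{\bar T})^{r-s}$, and the polarization identity for symmetric multilinear forms (valid in characteristic $0$) does upgrade the diagonal equality $(\bar L|_{\bar T'})^{r-s} = c(\bar L|_{\bar T})^{r-s}$ to the full $\bar T' \equiv_w c\bar T$ --- but all of this bookkeeping, plus the initial step establishing $\dim(T\cap T') > 0$, is replaced in the paper by the single intersection computation above. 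Your proof of (2) via the decomposition $f = \phi + f(0)$ is also fine and more elementary than the paper's appeal to Kodaira dimensions and the ramification formula.
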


\begin{proof}
By \cite[Corollary~3.5]{Mor87}, both $B$ and $B'$ are translations of some subtori in $A$. (1) follows.

By the ramification divisor formula, we have $0 = \kappa(B) \ge \kappa(f(B)) \ge 0$, so $\kappa(f(B)) = 0$ and (2) follows; see \cite[Corollary~3.5]{Mor87}.

Now we prove the equivalence part of the lemma.
Clearly (3ii) and (3iii) are equivalent, and they easily imply (3i). Now we assume (3i). We will prove (3ii). Possibly replacing $B$ with its translation, we may assume that $B$ is a subgroup, hence a subtorus of $A$. Consider the quotient morphism $\pi: A\to A/B$. 
    If $\pi(B')$ is a point, then $B'$ is a fiber and we are done.
    Suppose to the contrary that $C:=\pi(B')$ is positive-dimensional.
    Take a very ample divisor $H_Y$ on $Y$ away from the point $\pi(B)$, and ample divisors $H_j$ on $A$.
    Then for $r = \dim(B)$ and ample divisors $H_1,\cdots, H_{r-1}$, we have $$0 = cB . (\pi^*H_Y). H_1 \cdots H_{r-1} = B' . (\pi^*H_Y) . H_1 \cdots H_{r-1},$$ where the right hand side is positive because $B' . \pi^*H_Y$
    is a positive cycle dominating nonzero cycle $C \cap H_Y$ via $\pi$.
    This is a contradiction.
\end{proof}

\begin{lemma}\label{lem: uniform stable index}
    Let $A$ be an abelian variety, and let $f$ be a numerically $q$-polarized self-isogeny of $A$. Then the following statements hold.
    \begin{enumerate}
        \item Let $B$ be a subtorus of $A$. If $B$ is numerically $f$-preperiodic, then $B$ is $f$-periodic.
        \item Suppose that every subtorus of $A$ is numerically $f$-preperiodic. Then there exists a positive integer $v$ such that every subtorus of $A$ is $f^v$-stable.
    \end{enumerate}
\end{lemma}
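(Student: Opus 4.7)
The plan is to handle the two parts separately.

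For part (1), since $f$ is a self-isogeny, each iterate $f^i(B)$ is itself a subtorus (it contains $0_A$). From the hypothesis $f^r(B) \equiv_w c\, f^s(B)$ for some $r > s$ and $c \ne 0$, Lemma \ref{lem: numerically parallel abelian varieties}, combined with the fact that $0_A \in f^r(B) \cap f^s(B)$, upgrades the numerical parallelism to the equality $f^r(B) = f^s(B)$. I would then verify the elementary injectivity statement that $f$ is one-to-one on the set of subtori of each fixed dimension: if subtori $X, Y$ of dimension $d$ satisfy $f(X) = f(Y) = Z$, then each of $X, Y$ is a connected subgroup of $f^{-1}(Z)$ of dimension $\dim Z = d$, and must therefore coincide with the identity component of $f^{-1}(Z)$. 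Applying this injectivity $s$ times to $f^s(f^{r-s}(B)) = f^s(B)$ yields $f^{r-s}(B) = B$, so $B$ is $f$-periodic.

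For part (2), by part (1) every subtorus is already $f$-periodic, so the task is to produce a uniform period. I would linearize: by Lemma \ref{lem: eigenvalues of polarized map}, the operator $\phi := f_*$ on $V := H_1(A, \mathbb{C})$ is diagonalizable with a finite set of distinct eigenvalues $\alpha_1, \ldots, \alpha_k$, all of modulus $\sqrt{q}$; write $V = \bigoplus_i E_i$ for the eigenspace decomposition. Let $v$ be the least common multiple of the orders of those ratios $\alpha_i/\alpha_j$ which happen to be roots of unity. The integer $v$ is finite because there are only finitely many such ratios. The key observation is that for any $n \geq 1$, whenever $\alpha_i^n = \alpha_j^n$ the ratio $\alpha_i/\alpha_j$ is a root of unity of order dividing $n$, hence dividing $v$, so $\alpha_i^v = \alpha_j^v$. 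Consequently $\phi^v$ acts as a single scalar on each $\phi^n$-eigenspace $F_\beta := \bigoplus_{\alpha_i^n = \beta} E_i$, and it follows that every $\phi^n$-stable subspace of $V$ is automatically $\phi^v$-stable. Each subtorus $B$ corresponds to the $\mathbb{Q}$-sub-Hodge-structure $W := H_1(B, \mathbb{Q}) \subseteq H_1(A, \mathbb{Q})$, and $B$ is $f$-periodic if and only if $W$ is $\phi$-periodic. Hence every such $W$ is $\phi^v$-stable, which gives $f^v(B) = B$ for every subtorus $B$.

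The main obstacle is producing a single $v$ that simultaneously bounds all the individual periods $n_B$ of the various subtori $B$; the resolution is that the subset of eigenvalue ratios that are roots of unity is finite, so their orders have a finite lcm. The descent of $\phi^v$-stability from $\mathbb{C}$-subspaces of $V$ to $\mathbb{Q}$-sub-Hodge-structures is automatic, since $W_\mathbb{Q} = W_\mathbb{C} \cap H_1(A, \mathbb{Q})$.
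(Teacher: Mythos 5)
Your proof is correct in both parts, and it takes a genuinely different route from the paper's in each.

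For part (1), the paper works entirely inside the finite-dimensional vector space $TN_w^k(A)$ of codimension-$k$ subtorus classes modulo weak numerical equivalence: it pulls back the parallelism relation, uses that $(f^t)^*$ is an isomorphism on $TN_w^k(A)$, and invokes the polarization to pin down the scalar $(f^u)^*[B]=q^{uk}[B]$ before finally applying Lemma~\ref{lem: numerically parallel abelian varieties}. You instead apply Lemma~\ref{lem: numerically parallel abelian varieties} immediately (using that $0_A\in f^r(B)\cap f^s(B)$, since $f$ is a homomorphism) to get $f^r(B)=f^s(B)$ outright, then finish with the elementary injectivity of $f$ on subtori of a fixed dimension via the identity component of $f^{-1}(Z)$. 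Your argument is shorter and, notably, does not invoke the polarization hypothesis at all for this part — only that $f$ is an isogeny.

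For part (2), the paper again stays in $TN_w^k(A)$: it picks finitely many subtori $T_i$ whose classes span, applies part (1) to each to get finite individual periods, and takes a common multiple so that $(f^v)^*|_{TN_w^k(A)}=q^{vk}\cdot\id$. You instead linearize on $V=H_1(A,\C)$, use Lemma~\ref{lem: eigenvalues of polarized map} to diagonalize $\phi=f_*$ with eigenvalues of equal modulus $\sqrt q$, and take $v$ to be the lcm of the orders of the root-of-unity eigenvalue ratios $\alpha_i/\alpha_j$; any $\phi^n$-stable subspace is then automatically $\phi^v$-stable, and the descent to $\Q$-subspaces and thence to subtori is standard. (One phrase, ``a root of unity of order dividing $n$, hence dividing $v$,'' is slightly misworded — the order divides $v$ because $v$ is defined as the lcm of all such orders, not because it divides $n$ — but your intended logic is clearly correct.) Your version is more explicit: it produces a concrete formula for $v$ in terms of eigenvalue data, whereas the paper's $v$ arises from an abstract common period over a spanning set. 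The paper's approach has the advantage of staying within the cycle-theoretic framework it has already built; yours is perhaps more illuminating about where the uniform bound actually comes from.
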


\begin{proof}
    Let $N^k(A)$ be the $\R$-linear space of codimension $k$ algebraic cycles of $A$ modulo numerical equivalence, and let $TN^k(A)\subset N^k(A)$ be the $\R$-linear subspace generated by the classes of codimension $k$ subtori of $A$. By \cite[Theorem~3.5]{Klei68}, $N^k(A)$ is a finite dimensional $\R$-linear space. 
    Let $N_w^k(A)$ be the group of $k$-cycles modulo weak numerical equivalence, and $N^k(A) \to N_w^k(A)$ and $TN^k(A) \to TN_w^k(A)$ the natural surjective homomorphisms. By the projection formula, we have $f_*f^*
    = \deg(f) \cdot \id$ on $TN^k(A)$, where $\deg(f) = q^{\dim(A)}$. See \cite[\S 2.3]{Zh10} for the pullback and intersection of cycles.
    
    For (1), since $B$ is numerically $f$-preperiodic, there are positive integers $r > s$ such that $f^{r}(B)$ and $f^{s}(B)$ are numerically parallel. So $(f^{r})^*B$ and $(f^{s})^*B$ are numerically parallel too. Here we view $B$ (instead of $[B]$) as an element in $TN_w^k(A)$ with $k$ the codimension of $B$ in $A$. Since $f$ is numerically $q$-polarized and $(f^t)^*$ acts as an isomorphism on $TN_w^k(A)$ for any $t$, we have $(f^{u})^*[B]=q^{uk}[B]$ with $u = r-s$. This implies that $B$ is numerically $f$-periodic, hence $f$-periodic by Lemma~\ref{lem: numerically parallel abelian varieties}.

    For (2), we take subtori $T_i$ ($1\leq i\leq m$) such that their classes $[T_i]$ form a finite $\R$-basis in $TN_w^k(A)$. By (1), every subtorus $T_i$ of $A$ is $f$-periodic. This implies that $(f^v)^*|_{TN_w^k(A)}=q^{vk} \cdot \mathrm{id}$ for some $v > 0$. In particular, every subtorus of $A$ is numerically $f^v$-stable, and hence $f^v$-stable by Lemma~\ref{lem: numerically parallel abelian varieties}.
\end{proof}

\begin{lemma}\label{lem: numerically f-periodic elliptic curve and quotient}
Let $f: A \to A$ be a finite surjective endomorphism of an abelian surface, and let $E \subset A$ be an elliptic curve. Then:
\begin{enumerate}
\item 
$f(E)$ is a smooth curve of genus one. 
\item 
Let $\tau_1: A \to Y_1$ and $\tau_2: A \to Y_2$ be the quotient maps with the fibers being translates of $E$ and $f(E)$ respectively.
Then $f$ induces a
surjective morphism $g: Y_1 \to Y_2$ so that $f^*$ takes a fiber of $\tau_2$ to a disjoint union of $\deg g$ of fibers of $\tau_1$. Furthermore,
$$f^*(f(E)) \equiv (\deg g) E, \,\,\,  \deg f|_E = (\deg f)/(\deg g).$$
\item 
$f(E)$ and $E$ are numerically parallel (i.e., $f(E) \equiv aE$ for some $a > 0$) if and only if $E$ and $f^*E$ are numerically parallel, if and only if $f$ permutes fibers of $\tau_1$. In this case, $\tau_1 = \tau_2$.
\item 
$f^i(E)$ is numerically parallel to $f^j(E)$ for some $j > i$ if and only if $E$ and $f^{j-i}(E)$ are numerically parallel.
\end{enumerate}
\end{lemma}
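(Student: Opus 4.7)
The overall plan is to exploit the fact that, after composing with appropriate translations, $f$ is a homomorphism of abelian varieties; then the two quotient maps $\tau_1, \tau_2$ together with a canonically descended isogeny $g\colon Y_1 \to Y_2$ of elliptic curves control everything. For part (1), I would simply invoke Lemma~\ref{lem: numerically parallel abelian varieties}(2) applied to $E \subset A$: the image $f(E)$ is a translation of a one-dimensional subtorus, hence a smooth elliptic curve.

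For part (2), after translating so that $E$ and $f(E)$ both pass through $0_A$ and $f$ is a homomorphism, $f$ descends to a surjective homomorphism $g\colon Y_1 = A/E \to Y_2 = A/f(E)$ between elliptic curves, hence an isogeny, hence \'etale in characteristic zero. The commutative square $\tau_2 \circ f = g \circ \tau_1$ then shows that the scheme-theoretic preimage $f^{-1}(\tau_2^{-1}(y_2))$ equals $\tau_1^{-1}(g^{-1}(y_2))$, which is the disjoint union of $\deg(g)$ fibers of $\tau_1$ with multiplicity one by \'etaleness; hence $f^*(f(E)) \equiv (\deg g)\cdot E$, and the degree identity $\deg(f) = \deg(g) \cdot \deg(f|_E)$ follows by counting preimages of a general point of $f(E)$ along the $\deg(g)$ components.

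For part (3), the equivalence of the first two conditions follows directly from Lemma~\ref{lem: numerically parallel abelian varieties}(3): $f(E) \equiv aE$ if and only if $E$ and $f(E)$ are translates of the same subtorus, if and only if $\tau_1 = \tau_2$, which (via the descended $g\colon Y_1 \to Y_1$) is equivalent to $f$ permuting fibers of $\tau_1$. The link with $f^*E \equiv bE$ goes through part (2): if $f(E) \equiv aE$, then $f^*(f(E)) \equiv (\deg g)E$ yields $f^*E \equiv ((\deg g)/a)E$; conversely, applying $f_*$ to $f^*E \equiv bE$ and using $f_* f^* = \deg(f) \cdot \id$ together with $f_* E = \deg(f|_E) \cdot f(E)$ produces $f(E) \equiv (\deg(f)/(b\deg(f|_E))) \cdot E$.

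For part (4), the ``if'' direction uses pushforward: $(f^i)_* E = \deg(f^i|_E) \cdot f^i(E)$ and $(f^i)_* f^{j-i}(E) = \deg(f^i|_{f^{j-i}(E)}) \cdot f^j(E)$, so $E \equiv c\cdot f^{j-i}(E)$ becomes $f^i(E) \equiv c'\cdot f^j(E)$ after applying $(f^i)_*$. The converse applies $(f^i)^*$ to $f^i(E) \equiv c\cdot f^j(E)$ and invokes part (2) applied to $f^i$ in two ways --- once with $E$ and once with $f^{j-i}(E)$ --- converting both sides to scalar multiples of $E$ and $f^{j-i}(E)$ respectively. The main technical point I expect is the \'etaleness step in part (2), which ensures that $f^*(f(E))$ is a reduced disjoint union of translates of $E$ rather than carrying any ramification multiplicity; once this is granted, the remaining arguments are bookkeeping with the projection formula and the degree count along fibers.
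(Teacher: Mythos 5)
Your proposal is correct and takes essentially the same approach as the paper: invoking Lemma~\ref{lem: numerically parallel abelian varieties} for part (1), descending $f$ to a surjective isogeny $g\colon Y_1 \to Y_2$ via the commutative square for part (2), and using $f_* f^* = \deg(f)\cdot \id$ together with the pullback formula from (2) for parts (3) and (4). You fill in some scalar bookkeeping for (3)--(4) that the paper dismisses as ``clear,'' but the underlying argument is the same.
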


\begin{proof}
(1) follows from Lemma~\ref{lem: numerically parallel abelian varieties}.

For (2), by possibly replacing $E$ with its translation, we may assume that $E$ (hence $f(E)$) is a subgroup of $A$. Now we consider the morphism $g'=\tau_2\circ f: A\to A \to Y_2=A/f(E)$. Since $\Ker  (g') \supset E$, we have an induced surjective isogeny $g: Y_1=A/E \to Y_2=A/f(E)$ and the following commutative diagram
$$\xymatrix{
    A \ar[r]^f \ar[d]_{\tau_1} & A \ar[d]^{\tau_2}\\
    Y_1 \ar[r]^g & Y_2
}$$
In particular, $f^*$ takes a fiber of $\tau_2$ to a disjoint union of $\deg g$ of fibers of $\tau_1$. Now the last part of (2) follows from the projection formula.

For (3)-(4), note that 
$$(*) \,\hskip 1pc 
f_* f^* = (\deg f) \id = f^* f_*$$ on $\Pic (A) \otimes_{\Z} \Q$ (actually true on any projective variety); see \cite[Proposition~3.7(3)]{MZ22}. $(*)$ implies that $f^*f(E)$ is numerically parallel to $E$. Hence $f(E)$ and $E$ are numerically parallel if and only if $E$ and $f^*E$ are numerically parallel. The remaining assertions are clear.
\end{proof}

We denote the exponent of a finite abelian group $G$ as ${\rm exp}(G)$.

\begin{lemma}\label{lem: complex structure of endomorphism ring of abelian variety}
    Let $A$ be an abelian variety of dimension $n$. Let $\lambda$ be a complex number such that the multiplication-by-$\lambda$ map defines a $q$-polarized self-isogeny of $A$. Then $\lambda+\bar\lambda$ is an integer. In particular, $\mathrm{exp}(\Ker(\lambda\cdot))$ divides $q$.
\end{lemma}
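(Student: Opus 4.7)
The plan is to combine the analytic and rational representations of the self-isogeny $[\lambda]:=\lambda\cdot\colon A\to A$. Writing $A=V/\Lambda$ with tangent space $V\cong\mathbb{C}^n$, the analytic representation $\rho_a([\lambda])$ is the scalar matrix $\lambda I_n$ on $V$, so $[\lambda]^*$ acts on $H^{1,0}(A,\mathbb{C})$ by the scalar $\lambda$ (since $[\lambda]^*dz_i=\lambda\,dz_i$). Applying Lemma~\ref{lem: eigenvalues of polarized map} to the $q$-polarized $[\lambda]$, each such eigenvalue has modulus $\sqrt{q}$, whence $\lambda\bar\lambda=q$.

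To upgrade this to $s:=\lambda+\bar\lambda\in\mathbb{Z}$, I pass to the rational representation $\rho_r\colon\mathrm{End}(A)\to M_{2n}(\mathbb{Z})$ attached to a $\mathbb{Z}$-basis of $\Lambda$. Since $\rho_r\otimes\mathbb{C}\cong\rho_a\oplus\overline{\rho_a}$, the characteristic polynomial of $\rho_r([\lambda])$ is
\[
\chi(x)=(x^2-sx+q)^n\in\mathbb{Z}[x].
\]
In particular $\lambda$ is an algebraic integer, so its minimal polynomial $m(x)\in\mathbb{Z}[x]$ over $\mathbb{Q}$ is irreducible and separable. Since $m\mid\chi$ in $\mathbb{Q}[x]$ and the only roots of $\chi$ in $\mathbb{C}$ are $\lambda$ and $\bar\lambda$, the roots of $m$ lie in $\{\lambda,\bar\lambda\}$, each with multiplicity one by separability. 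Thus either $m(x)=x-\lambda\in\mathbb{Z}[x]$, giving $\lambda\in\mathbb{Z}$ and $s=2\lambda\in\mathbb{Z}$; or $m(x)=(x-\lambda)(x-\bar\lambda)=x^2-sx+q\in\mathbb{Z}[x]$, giving $s\in\mathbb{Z}$ directly.

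With $s\in\mathbb{Z}$ in hand, the ``in particular'' clause follows from the identity $\lambda(s-\lambda)=q$ obtained from $\lambda^2-s\lambda+q=0$. Since $s$ is an integer and $[\lambda]\in\mathrm{End}(A)$, the difference $[s]-[\lambda]$ is again an endomorphism of $A$ acting on $V$ as the scalar $s-\lambda$, so
\[
[\lambda]\circ\bigl([s]-[\lambda]\bigr)=[\lambda(s-\lambda)]=[q]\quad\text{on }A.
\]
Therefore $\Ker([\lambda])\subseteq\Ker([q])=A[q]$, and $\mathrm{exp}(\Ker([\lambda]))$ divides $q$.

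The main obstacle is the integrality of $s$: naive coefficient comparison in $\chi(x)\in\mathbb{Z}[x]$ only yields $ns\in\mathbb{Z}$. The crucial refinement invokes the irreducibility and separability of the minimal polynomial of $\lambda$, together with the restriction of roots of $\chi$ to $\{\lambda,\bar\lambda\}$, to pin down $m(x)$ as either $x-\lambda$ or $x^2-sx+q$, forcing $s\in\mathbb{Z}$.
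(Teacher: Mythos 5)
Your proof is correct, and while it shares the core idea with the paper — namely, that the rational representation of $[\lambda]$ is an integer matrix $L$ whose characteristic polynomial over $\C$ factors as $(x-\lambda)^n(x-\bar\lambda)^n$, forcing arithmetic constraints on $\lambda$ — the way you extract integrality of $s=\lambda+\bar\lambda$ differs. The paper writes down $L$ explicitly in a real basis, computes $\mathrm{tr}(L)=ns$ to deduce $s\in\Q$, and then applies Gauss's lemma to the equation $p_\lambda(x)^n=\det(xI_{2n}-L)$ to conclude $p_\lambda\in\Z[x]$. You instead observe that $\lambda$ is an algebraic integer (being a root of the monic $\chi\in\Z[x]$), so its minimal polynomial is monic in $\Z[x]$, irreducible and separable; since it divides $\chi$ whose only complex roots are $\lambda,\bar\lambda$, it must equal $x-\lambda$ or $x^2-sx+q$, and in either case $s\in\Z$. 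This is a clean shortcut that avoids both the trace computation and the primitivity argument; both the paper's Gauss-lemma step and your minimal-polynomial step are really two standard ways to see that a monic $\Q[x]$-polynomial with an $n$-th power in $\Z[x]$ already lies in $\Z[x]$. Your use of Lemma~\ref{lem: eigenvalues of polarized map} to supply $\lambda\bar\lambda=q$ makes explicit a fact the paper leaves implicit. The "in particular" clause is handled the same way in both proofs (compose $[\lambda]$ with $[s]-[\lambda]=[\bar\lambda]$ to get $[q]$); one might note that, in your formulation, the containment $\Ker([\lambda])\subseteq\Ker([q])$ follows because $[\lambda]$ and $[s]-[\lambda]$ commute (both being scalars on the universal cover), so the factorization can be read in either order.
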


\begin{proof}
Write $A=\mathbb{C}^n/\Lambda$, where $\Lambda$ is a lattice generated by $2n$ complex (column) vectors $v_1,\cdots,v_{2n}$. Since the multiplication-by-$\lambda$ map defines a self-isogeny on $A$, we have $\lambda\cdot \Lambda\subset \Lambda$, and $$\lambda\cdot (v_1,\cdots,v_{2n})= (v_1,\cdots,v_{2n})\cdot L$$ for some $2n\times 2n$ integer matrix $L$. Note that the matrix of $\lambda\cdot$ with respect to the standard $\mathbb{R}$-basis of $\mathbb{C}^n$ given by $$\{e_k,ie_k\mid e_k \text{ is the standard Euclidean basis vector whose $k$-th coordinate is 1}\}_{1\leq k\leq n}$$ is $\mathrm{diag}(\rho(\lambda),\cdots,\rho(\lambda))$, where $\rho(a+bi)=\begin{pmatrix}
        a &b\\-b &a
    \end{pmatrix}$. This implies that $$\mathrm{tr}(L)=\mathrm{tr}(\mathrm{diag}(\rho(\lambda),\cdots,\rho(\lambda)))=n(\lambda+\bar\lambda)\in \mathbb{Z}.$$ Let $p_{\lambda}(x)$ be the characteristic polynomial for the matrix $\rho(\lambda)$. Then $$p_{\lambda}(x)=(x-\lambda)(x-\bar \lambda)=x^2-(\lambda+\bar \lambda)+q\in \Q[x].$$ 
    There is a rational number $m$ such that $mp_{\lambda}(x)$ is a primitive element in $\Z[x]$. By Gauss's lemma, $(mp_{\lambda}(x))^n$ is primitive in $\Z[x]$.
    Since $L$ is similar to $\mathrm{diag}(\rho(\lambda),\cdots,\rho(\lambda))$, $$p_{\lambda}(x)^n=\det(xI_{2n}-L)$$ is a monic in $\Z[x]$. This implies that $m=1$. Hence $p_{\lambda}(x)\in \Z[x]$, and $\lambda+\bar\lambda=n_{\lambda}\in \Z$. In particular, $\bar{\lambda}=n_{\lambda}-\lambda$ defines a self-isogeny of $A$. Now, for any $x\in \Ker(\lambda\cdot)$, $0 = \bar\lambda\cdot \lambda\cdot x=|\lambda|^2\cdot x=q\cdot x$. Hence, the exponent $\mathrm{exp}(\mathrm{Ker}(\lambda\cdot))$ divides $q$.
\end{proof}

\begin{lemma}\label{lem: basic property of abelian Neron Severi}
Let $A$ be an abelian variety and let $B$ be any subtorus of $A$. Then we have:
\begin{enumerate}
\item[(1)]
$\NS(B)_{\Q} = (\NS(A)_{\Q})|_B$.
\item[(2)]
Suppose that $A$ is isogenous to $E_1\times \cdots \times E_n$, where $E_i$ is an elliptic curve for each $i$. Then for every subtorus $B$ of codimension $m > 0$, we have a sequence $B = A_m \subset A_{m-1} \subset \cdots \subset A_{1}$ of subtori $A_k$ of codimension $k$.
\end{enumerate}
\end{lemma}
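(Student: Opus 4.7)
For part (1), the inclusion $(\NS(A)_\Q)|_B \subseteq \NS(B)_\Q$ is automatic, so the content is to show any class in $\NS(B)_\Q$ comes from $\NS(A)_\Q$ via restriction. My plan is to invoke Poincar\'e's complete reducibility theorem to produce a subtorus $B'\subset A$ complementary to $B$, meaning $B+B'=A$ and $B\cap B'$ is finite. The quotient morphism $q:A\to A/B'$ then restricts to an isogeny $\rho := q|_B: B \to A/B'$ of abelian varieties of the same dimension. Since every isogeny admits a dual, there is an isogeny $\tau:A/B'\to B$ with $\tau\circ\rho = [n]_B$ for some positive integer $n$. For $L\in\NS(B)_\Q$ one computes
$$\rho^*\tau^*L \;=\; [n]^*L \;\equiv\; n^2 L$$
in $\NS(B)_\Q$, using the standard fact that $[n]^*$ acts as $n^2 \cdot \mathrm{id}$ on $\NS_\Q$ of an abelian variety. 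Hence $M := \tfrac{1}{n^2}\, q^*\tau^* L \in \NS(A)_\Q$ satisfies $M|_B = L$, proving surjectivity.

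For part (2), I plan to lift the problem through the given isogeny $\psi: E_1\times\cdots\times E_n\to A$ and build the chain inside the product. Let $B_0$ be the identity component of $\psi^{-1}(B)$; since $\psi$ has finite kernel, $B_0$ is a subtorus of $E := E_1\times\cdots\times E_n$ of codimension $m$ with $\psi(B_0)=B$. Regarding each $E_i$ as a subtorus of $E$ via the standard embedding, one has $E = E_1+\cdots+E_n$. The key observation is: for any proper subtorus $T\subsetneq E$, some factor $E_i$ must fail to lie in $T$ (otherwise $T\supseteq\sum_i E_i = E$); for such $i$ the intersection $T\cap E_i$ is a proper subgroup of the elliptic curve $E_i$, hence finite, so $T+E_i$ is a subtorus of $E$ with $\dim(T+E_i) = \dim T + 1$.

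Starting from $T_m := B_0$ and applying this observation repeatedly, I obtain a chain $B_0 = T_m\subsetneq T_{m-1}\subsetneq\cdots\subsetneq T_1\subsetneq E$ of subtori with $\mathrm{codim}_E(T_k)=k$ for each $k$. Setting $A_k := \psi(T_k)$ then yields subtori of $A$ of the same codimensions (as $\psi$ preserves dimensions), with $A_m = \psi(B_0)=B$, producing the desired chain $B = A_m\subset A_{m-1}\subset\cdots\subset A_1$. Neither part presents a serious obstacle; the care required is to work throughout in $\NS_\Q$ in part (1) so that isogenies become invertible, while in (2) the hypothesis that $A$ is isogenous to a product of elliptic curves is used essentially: it provides the distinguished one-dimensional subtori $E_i$ that can be absorbed one at a time to increase dimension by exactly one.
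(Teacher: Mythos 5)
Your proposal is correct, and both parts follow a genuinely different route from the paper's proof, so a comparison is warranted.

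For (1), the paper also invokes Poincar\'e reducibility, but then considers the isogeny $\mu: B\times B'\to A$ given by addition and the projection $\pi_B: B\times B'\to B$: since $\mu^*$ is an isomorphism on $\NS_\Q$, any $D\in\NS(B)_\Q$ gives $\pi_B^*D = \mu^*D_A$, and restricting to $B\times\{0\}$ (where $\mu$ is the inclusion of $B$ and $\pi_B$ is the identity) yields $D = (D_A)|_B$. You instead pass to the quotient $A/B'$ and use a dual isogeny $\tau$ with $\tau\circ\rho=[n]_B$ together with $[n]^*\equiv n^2\cdot\mathrm{id}$ on $\NS_\Q$. Both arguments are short and correct; the paper's avoids dual isogenies and the $[n]^*$ computation by exploiting the identity $\pi_B|_{B\times\{0\}}=\mathrm{id}_B$, while yours is perhaps closer to how one would first think of ``inverting'' the restriction map.

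For (2), the paper projects downward: it considers $g: E_1\times\cdots\times E_n\to A\to A/B$, finds $m$ factors $E_{k_1},\ldots,E_{k_m}$ mapping finitely and surjectively onto $A/B$, concludes $A/B$ is isogenous to a product of elliptic curves, then takes the preimages in $A$ of a flag of subtori in $A/B$. You instead lift $B$ upward to $B_0\subset E_1\times\cdots\times E_n$ and grow the flag one dimension at a time by absorbing factors $E_i\not\subset T$, using that $T\cap E_i$ is then finite, finally pushing forward along $\psi$. Your construction is a bit more hands-on and makes the inductive step explicit; the paper's is a one-line reduction to ``the quotient is again a product of elliptic curves,'' trading the explicit step for a cleaner recursion. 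Both are fine, and the key point in each case is that isogenies and quotients of products of elliptic curves remain (isogenous to) products of elliptic curves.
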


\begin{proof}
For (1), by the complete reducibility of Poincar\'e, there is a subtorus $B'$ such that the addition map $\mu: B \times B' \to A$ is an isogeny (of the same Picard number). Let $\pi_B: B \times B' \to B$ be the projection.
For any $\Q$-Cartier divisor $D$ on $B$, we have $\pi_B^*D = \mu^*D_A$ for some $\Q$-Cartier divisor $D_A$. Now
$$D = \pi_B^*D|_{B \times \{0\}} = \mu^*(D_A|_B) = (D_A)|_B.$$

For (2), by assumption, we have an isogeny $h: E_1\times\cdots \times E_n \to A$. Now we consider the composition morphism (also a group homomorphism) $$g: E_1\times\cdots \times E_n \to A \to A/B.$$ Then there exist some indices $\{k_1,\cdots,k_{m}\}$ such that $$g|_{E_{k_1}\times \cdots E_{k_m}}: E_{k_1}\times \cdots \times E_{k_m} \to A/B$$ is finite and surjective. Hence $A/B$ is isogenous to the product of some elliptic curves. In particular, we can find a chain of subtori inside $A/B$ such that $$\{0\}=A_m'\subset A_{m-1}'\cdots\subset A_0'=A/B$$ with $\mathrm{codim}(A_i')=i$ for $0\leq i\leq m$, and this induces a chain of subtori $$B= A_m\subset \cdots \subset A_0=A$$ with $\mathrm{codim}(A_i)=i$ for $0\leq i\leq m$.
\end{proof}

\begin{lemma}\label{lem: maximal Albanese dimension}
    Let $X$ be a normal projective variety such that $X$ admits a polarized endomorphism and has a generically finite morphism $\alpha: X \to A$ into an abelian variety $A$ (or equivalently $X$ has maximal Albanese dimension).
    Then both $X$ and $\alpha(X)$ are abelian varieties, $\alb_X: X \to \Alb(X)$ is an isomorphism, and $\alpha$ factors as $\alb_X$ and a finite morphism (into $A$) $\Alb(X) \to A$.
\end{lemma}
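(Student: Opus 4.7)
The plan is to first verify that $X$ has maximal Albanese dimension, then to check that $X$ is non-uniruled, then to apply Proposition~\ref{thm: quasi-abelian and non-uniruled} to realize $X$ as a Galois quotient $A'/G$ of an abelian variety, and finally to exploit maximal Albanese dimension to show that $G$ acts purely by translations, forcing $X$ to be abelian.

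First I would observe that, by the universal property of the Albanese map, $\alpha$ factors as $\alpha = \beta \circ \alb_X$ for a unique $\beta: \Alb(X) \to A$; since $\alpha$ is generically finite, so is $\alb_X$, giving maximal Albanese dimension (this also shows the two formulations in the hypothesis coincide). Passing to a smooth resolution $\widetilde{X} \to X$, the Albanese map $\alb_{\widetilde{X}}$ remains generically finite. Because every rational curve is contracted by any morphism to an abelian variety, a uniruled $\widetilde{X}$ would produce positive-dimensional fibers of $\alb_{\widetilde{X}}$ through a general point, contradicting generic finiteness. Hence $\widetilde{X}$, and therefore $X$, is non-uniruled.

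Since $X$ is non-uniruled and admits a polarized endomorphism, Proposition~\ref{thm: quasi-abelian and non-uniruled} provides a quasi-\'etale finite Galois cover $\sigma: A' \to X$ with Galois group $G$, where $A'$ is abelian and $\dim A' = \dim X$. After normalizing origins so that $\tau := \alb_X \circ \sigma$ sends $0$ to $0$, $\tau$ is a homomorphism of abelian varieties whose image $\tau(A') = \alb_X(X)$ has dimension $\dim X = \dim A'$. Thus $\tau$ is a surjective isogeny onto the sub-abelian variety $\tau(A') \subset \Alb(X)$, and in particular $\Ker(\tau)$ is finite.

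For the crucial last step, write each $g \in G$ acting on $A'$ as $g(a) = \phi_g(a) + v_g$, with $\phi_g$ a group automorphism fixing $0$ and $v_g \in A'$ a translation vector. The $G$-invariance $\tau \circ g = \tau$, evaluated at $0$, gives $\tau(v_g) = 0$, and then yields $(\phi_g - \id)(A') \subset \Ker(\tau)$ for all $a \in A'$. Since the morphism $\phi_g - \id$ sends the connected variety $A'$ into the finite set $\Ker(\tau)$ and vanishes at $0$, it is identically zero; hence $\phi_g = \id$. Thus $G$ acts on $A'$ purely by translations, so $X = A'/G$ is itself an abelian variety, $\alb_X$ is an isomorphism, and $\beta$ is a morphism of abelian varieties with finite kernel (as $\alpha$ is generically finite), i.e., a finite morphism, with image the sub-abelian variety $\alpha(X) \subset A$.

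The main obstacle I anticipate is this last step: it is exactly the finiteness of $\Ker(\tau)$, forced by maximal Albanese dimension combined with $\dim A' = \dim X$, that rules out nontrivial $\phi_g$. Without this input, $Q$-abelian but non-abelian phenomena (for instance, hyperelliptic surfaces when $\dim X = 2$) could a priori arise, and one would have to exclude them separately by verifying that such varieties do not admit a generically finite morphism to an abelian variety.
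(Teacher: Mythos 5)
Your proof is correct, and it takes a genuinely different route from the paper's. The paper first uses a Kodaira-dimension argument (combining \cite[Theorem~1.3]{NZ}, \cite[Corollary~3.5]{Mor87}, and the ramification formula) to get $\kappa(X)=\kappa(\alpha(X))=0$ and hence non-uniruledness; then it deduces that $\alpha$ is finite from the finiteness of $A' \to X \to \alpha(X)$; and finally it invokes Kawamata's structure theorem \cite[Theorem~1]{Kaw81} (together with the fact that $Q$-abelian varieties have rational singularities) to see that $\alb_X$ is surjective with connected fibers, so finiteness forces it to be an isomorphism. Your argument reaches non-uniruledness more elementarily (rational curves are contracted by any map to an abelian variety, contradicting generic finiteness of $\alb_{\widetilde{X}}$), and then, instead of routing through Kawamata, you analyze the Galois action directly: after normalizing origins, $\tau = \alb_X \circ \sigma : A' \to \Alb(X)$ is a homomorphism with finite kernel because $\dim \tau(A') = \dim A'$, and the $G$-equivariance $\tau \circ g = \tau$ forces each $\phi_g - \id$ to map $A'$ into the finite group $\Ker(\tau)$, hence to vanish; so $G$ acts by translations, $X=A'/G$ is abelian, and the conclusions follow. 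This is a nice self-contained alternative that trades the black box of Kawamata's theorem for a short rigidity argument; the paper's version is a bit quicker given the heavy machinery it cites. One small stylistic note: the phrase ``and then yields $(\phi_g-\id)(A')\subset\Ker(\tau)$ for all $a\in A'$'' contains a stray quantifier, but the mathematics is sound.
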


\begin{proof}
By \cite[Theorem~1.3]{NZ}, \cite[Corollary~3.5]{Mor87} and the ramification divisor formula for $\alpha$, the Kodaira dimensions satisfy $0 \ge \kappa(X) \ge \kappa(\alpha(X)) \ge 0$. Hence $\kappa(X) = \kappa(\alpha(X)) =0$, and $\alpha(X)$ is a translation of a subtorus of $A$. In particular, $X$ is non-uniruled. It follows that $X$ is $Q$-abelian: there exists a quasi-\'etale finite surjective morphism $A' \to X$ from an abelian variety $A'$ (cf.~\cite[Theorem~3.4]{NZ}, \cite[Theorem~1.21]{GKP13}). 

The composition morphism $A' \to X \to \alpha(X)$ is generically finite and hence finite. Since $A' \to X$ is surjective, it follows that $\alpha: X \to \alpha(X)$ is also a finite morphism. By the universal property of $\Alb(X)$, the morphism $\alpha: X \to \alpha(X)$ factors 
through the Albanese morphism: $$X \xrightarrow{\alb_X} \Alb(X) \xrightarrow{\tau} \alpha(X).$$
By \cite[Theorem~1]{Kaw81} and noting that $X$ has quotient singularities (hence rational singularities), $\alb_X$ is surjective with connected fibers. Since $\alb_X$ is finite, it is an isomorphism.

\end{proof}

\section{Exponents and ranks of kernels of polarized isogenies: Proof of Theorem~\ref{thm: rank main}}\label{section 3}

In this section, we study the exponents and ranks of the kernels of polarized self-isogenies of an abelian variety.

\medskip

We recall some facts about polarizations of abelian varieties. Let $\mathcal{L}$ be an ample line bundle on an abelian variety $A$ of dimension $d$. Then $\mathcal{L}$ determines a polarization $\lambda_{\mathcal{L}}: A\to A^\vee$, where $\lambda_{\mathcal{L}}$ is an isogeny from $A$ to its dual $A^\vee$ with a finite kernel $K(\mathcal{L}):=\Ker(\lambda_{\mathcal{L}})$. Moreover, we have $|K(\mathcal{L})|=\chi(\mathcal{L})^2$, and by the Riemann-Roch formula, $\chi(\mathcal{L})=\frac{1}{d!}\mathrm{Vol}(\mathcal{L}) = \frac{1}{d!}(\mathcal{L}^d)$.

\begin{lemma}\label{lem: rank main 1}
Let $f$ be a $q$-polarized self-isogeny of an abelian variety $A$ and let $\mathcal{L}$ be an ample line bundle on $A$ that is numerically $f$-periodic. Then for any $f$-stable subtorus $B$ of $A$, we have the following.
\begin{enumerate}
    \item $\rank \Ker (f^s|_B)\geq \dim(B)$ for all integers $s$ with $$s> (\dim(A)-1)\cdot \log_q |K(\mathcal{L})|.$$
    \item For any prime number $p\mid q$, we have $\rank_p \Ker (f^s|_B)\geq \dim(B)$ for all integers $$s>(\dim(A)-1)\cdot \log_p |K(\mathcal{L})|.$$
\end{enumerate}
Moreover, if $f^*\mathcal{L}\equiv\mathcal{L}^{\otimes q}$, then for each positive integer $s$, $$\exp(\Ker (f^s))\ \big|\   q^s|K(\mathcal{L})|.$$
\end{lemma}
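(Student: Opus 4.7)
The plan is to bound the exponent of $\Ker f^s$ via the polarization isogeny, and then convert the exponent bound into a rank (or $p$-rank) bound for $\Ker f^s|_B$ by pigeonhole. I first address the ``moreover'' clause. Using the standard pullback identity $\lambda_{f^*\mathcal{L}} = f^\vee \circ \lambda_\mathcal{L} \circ f$ for the polarization isogeny $\lambda_\mathcal{L}\colon A \to A^\vee$ (which depends only on the numerical class of $\mathcal{L}$), the hypothesis $f^*\mathcal{L}\equiv\mathcal{L}^{\otimes q}$ yields $f^\vee \circ \lambda_\mathcal{L} \circ f = q\lambda_\mathcal{L}$, and iterating gives $(f^s)^\vee \circ \lambda_\mathcal{L} \circ f^s = q^s \lambda_\mathcal{L}$ for every $s \ge 1$. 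For $x \in \Ker f^s$, the left-hand side sends $x$ to $0$, so $q^s x \in K(\mathcal{L})$; since $K(\mathcal{L})$ is annihilated by its own order, $q^s |K(\mathcal{L})| \cdot x = 0$, giving $\exp(\Ker f^s) \mid q^s |K(\mathcal{L})|$.

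Next I address parts (1) and (2). Because $f$ is $q$-polarized and $B$ is $f$-stable, $f|_B$ is $q$-polarized on $B$, so $|\Ker f^s|_B| = q^{s\dim B}$. Since $\Ker f^s|_B \subseteq \Ker f^s$, the exponent bound from the moreover clause transfers: $\exp(\Ker f^s|_B) \mid q^s |K(\mathcal{L})|$. Using $|G| \le \exp(G)^{\rank G}$ for any finite abelian group $G$, if $r := \rank \Ker f^s|_B < \dim B$ then
\[
q^{s \dim B} \;\le\; (q^s |K(\mathcal{L})|)^{\dim B - 1},
\]
which rearranges to $s \le (\dim B - 1)\log_q |K(\mathcal{L})| \le (\dim A - 1)\log_q |K(\mathcal{L})|$. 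The contrapositive gives (1). For (2), the same pigeonhole applied to the Sylow $p$-subgroup of $\Ker f^s|_B$---whose order equals $p^{s \dim B \cdot v_p(q)}$ and whose exponent divides $p^{v_p(q^s |K(\mathcal{L})|)}$---produces the $p$-rank bound with $\log_p$ in place of $\log_q$.

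The main obstacle is that the moreover clause needs the hypothesis $f^*\mathcal{L}\equiv \mathcal{L}^{\otimes q}$, whereas parts (1) and (2) only assume that $\mathcal{L}$ is numerically $f$-periodic, say $(f^v)^*\mathcal{L}\equiv q^v\mathcal{L}$ for some $v \ge 1$. My proposed fix is to replace $\mathcal{L}$ by the ample line bundle $\mathcal{M}:=\sum_{i=0}^{v-1}q^{v-1-i}(f^i)^*\mathcal{L} \in \Pic(A)$, which satisfies $f^*\mathcal{M} \equiv q\mathcal{M}$ by a telescoping computation, and then apply the moreover clause to $\mathcal{M}$ in place of $\mathcal{L}$. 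This gives $\exp(\Ker f^s) \mid q^s |K(\mathcal{M})|$, and one must then compare $|K(\mathcal{M})|$ with $|K(\mathcal{L})|$ by expanding the top self-intersection $\mathcal{M}^{\dim A}$ via the multinomial formula and the projection formula $((f^i)^*\mathcal{L})^{\dim A}= q^{i\dim A}\mathcal{L}^{\dim A}$. Ensuring that this comparison is sharp enough to reach the stated threshold---and does not leave a residual dependence on the period $v$---is the chief technical hurdle.
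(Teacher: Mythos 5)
Your treatment of the ``moreover'' clause and the pigeonhole conversion from an exponent bound to a rank ($p$-rank) bound are correct and agree with the paper. The gap you flag at the end, however, is a genuine one, and the proposed auxiliary bundle $\mathcal{M}=\sum_{i=0}^{v-1}q^{v-1-i}(f^i)^*\mathcal{L}$ cannot close it: since $(f^i)^*\mathcal{L}\equiv q^{i}\mathcal{L}$, every summand is numerically $q^{v-1}\mathcal{L}$, so $\mathcal{M}\equiv vq^{v-1}\mathcal{L}$, whence $\lambda_{\mathcal{M}}=vq^{v-1}\lambda_{\mathcal{L}}$ and $|K(\mathcal{M})|=(vq^{v-1})^{2\dim(A)}\,|K(\mathcal{L})|$. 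The threshold you would extract is therefore $(\dim(A)-1)\log_q|K(\mathcal{M})|$, which strictly exceeds the stated $(\dim(A)-1)\log_q|K(\mathcal{L})|$ and retains an explicit dependence on the period $v$. So this route proves a weaker statement than Lemma~\ref{lem: rank main 1}, not the lemma itself.

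The paper avoids introducing any new bundle. The decisive point, which you already half-state (``$\lambda_{\mathcal{L}}$ depends only on the numerical class''), is pushed further: on an abelian variety $\NS(A)$ is torsion-free, so the numerical identity $(f^m)^*\mathcal{L}\equiv\mathcal{L}^{\otimes q^m}$ forces $(f^m)^*\mathcal{L}$ and $\mathcal{L}^{\otimes q^m}$ to be the \emph{same} class in $\NS(A)$, and since $\mathcal{L}\mapsto\lambda_{\mathcal{L}}$ factors through $\NS(A)$ this gives $\lambda_{(f^m)^*\mathcal{L}}=q^m\lambda_{\mathcal{L}}$ exactly, with the \emph{original} $\lambda_{\mathcal{L}}$ on the right. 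Plugging this into $\lambda_{(f^{ms'})^*\mathcal{L}}=(f^{ms'})^\vee\circ\lambda_{\mathcal{L}}\circ f^{ms'}$ gives, for $x\in\Ker(f^{ms'})$, that $q^{ms'}\lambda_{\mathcal{L}}(x)=0$, hence $\exp(\Ker(f^{ms'}))\mid q^{ms'}|K(\mathcal{L})|$ with no inflation of the kernel constant. The pigeonhole step then runs as in your write-up. In short, one should not synthesize a $(f^*,q)$-eigenbundle $\mathcal{M}$; one should observe that $\lambda_{\mathcal{L}}$ already transforms as a $q^m$-eigenvector under $(f^m)^*$ on the nose, by torsion-freeness of $\NS(A)$, and iterate $f^m$ rather than $f$.
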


\begin{proof}
    Let $s$ be a positive integer. Since $B$ is $f$-stable, $f^s|_B$ is $q^s$-polarized. Hence $|\Ker(f^s|_B)|=q^{\dim(B)\cdot s}$.

    \medskip

    We consider the polarization $$\lambda_{\mathcal{L}}: A \to A^\vee=\mathrm{Pic}^0(A)\ \ \ \ \ \ \lambda_{\mathcal{L}}(x)=t_x^*\mathcal{L}\otimes \mathcal{L}^{-1}$$ induced by $\mathcal{L}$, where $t_x$ denotes the translation-by-$x$ map on $A$. Let $K(\mathcal{L})$ be the kernel of $\lambda_{\mathcal{L}}$. Since $\mathcal{L}$ is ample, $|K(\mathcal{L})|$ is a finite integer and $$|K(\mathcal{L})|=\chi(\mathcal{L})^2=(\frac{1}{\dim(A)!}\cdot\mathrm{Vol}(\mathcal{L}))^2.$$ By \cite[Page~74,\S II.8]{Mum74}, $\lambda_{\mathcal{L}}$ is defined up to algebraic equivalence of $\mathcal{L}$, hence defined on $\NS(A)$.

    For simplicity, we denote $f^n$ by $f_n$ for each positive integer $n$. Since $\mathcal{L}$ is numerically $f$-periodic, there exists an integer $m > 0$ such that $f_m^*\mathcal{L}$ is numerically parallel to $\mathcal{L}$. Since $f$ is $q$-polarized, $f_m^*\mathcal{L}\equiv \mathcal{L}^{q^m}$. In particular, $f_m^*\mathcal{L}\otimes \mathcal{L}^{-q^m}$ (mod $\Pic^0(A)$) is a torsion element in $\NS(A)$ (cf. \cite[Corollary~1.4.38]{Laz}). Since $\NS(A)$ is torsion free, this further implies that $f_m^*\mathcal{L}$ and $\mathcal{L}^{q^m}$ represent the same class in $\NS(A)$. In particular, we have $$\lambda_{f_m^*\mathcal{L}}=\lambda_{\mathcal{L}^{q^m}}=q^m\cdot \lambda_\mathcal{L}.$$

    Now we replace $f_m$ with its iteration $f_m^{s'}=f_{ms'}$.  Let $f_{ms'}^\vee$ be the dual of $f_{ms'}$ induced via pulling back line bundles on $A$. Note that the following diagram commutes (cf. \cite[\S III.15, Theorem~1]{Mum74}). $$\begin{tikzcd} A \arrow[r, "f_{ms'}"] \arrow[d, "\lambda_{f_{ms'}^*\mathcal{L}}"'] & A \arrow[d, "\lambda_{\mathcal{L}}"] \\ A^\vee  & A^\vee \arrow[l, "f_{ms'}^\vee"'] \end{tikzcd}$$
    Hence, for any point $x\in A$, we have
    \begin{align}\label{eq: the duality composition relation}
        f_{ms'}^\vee\circ\lambda_\mathcal{L}\circ f_{ms'}(x)=\lambda_{f_{ms'}^*\mathcal{L}}(x)=\lambda_{\mathcal{L}^{q^{ms'}}}(x)=q^{ms'}\cdot \lambda_{\mathcal{L}}(x),
    \end{align}
    where $q^{ms'}\cdot \lambda_{\mathcal{L}}(x)$ means the multiplication-by-$q^{ms'}$ on the abelian variety $A^\vee$. Thus, the LHS (and hence the RHS) of \eqref{eq: the duality composition relation} is trivial for all $x \in \Ker(f_{ms'})$. So there is an induced group homomorphism $$\lambda_{\mathcal{L}}: \Ker(f_{ms'}) \to A^\vee [q^{ms'}],$$ where $A^\vee[q^{ms'}]$ denotes the $q^{ms'}$-torsion points on $A^\vee$. Hence, the exponent of the finite abelian group $\Ker(f_{ms'})$ satisfies
    \begin{align}\label{eq: estimation of the exponent of kernel}
        \mathrm{exp}(\Ker(f_{ms'}))\ \big| \  q^{ms'}|K(\mathcal{L})|.
    \end{align}
    As a consequence, we have $\mathrm{exp}(\Ker(f_{ms'}|_B))\ \big| \  q^{ms'}|K(\mathcal{L})|$. Set $r_{ms'}:= \rank \Ker(f_{ms'}|_B)$. Then by the structure theorem of finite abelian groups, we have
    $$q^{ms'\cdot \dim(B)}= \deg f_{ms'}|_B = |\Ker(f_{ms'}|_B)|\leq (q^{ms'}|K(\mathcal{L})|)^{r_{ms'}}.$$ This implies that 
    \begin{align}\label{eq: global estimation of global rank}
        r_{ms'}\geq \frac{ms'}{ms'+\log_q |K(\mathcal{L})|}\dim(B)
    \end{align}
    Since $r_{ms'}$ is an integer and $\dim(A)\geq \dim(B)$, by choosing $s=ms'$ such that $$s> (\dim(A)-1)\cdot \log_q|K(\mathcal{L})|,$$ we have $r_s=r_{ms'}\geq \dim(B)$. This proves (1).

    \medskip
    
    Let $\Ker_p(f_{ms'}|_B)$ be the Sylow $p$-subgroup of $\Ker(f_{ms'}|_B)$, and write $q=p^v\cdot u$ for some positive integers $v,u$ such that $\mathrm{gcd}(p,u)=1$. Then \eqref{eq: estimation of the exponent of kernel} implies that
    \begin{align}\label{eq: estimation of the exponent of p kernel}
        \mathrm{exp}(\Ker_p(f_{ms'}|_B))\ \big| \  p^{vms'}|K(\mathcal{L})|.
    \end{align}
    Suppose that $\rank_p \Ker (f_{ms'}|_B)=\rank \Ker_p(f_{ms'}|_B)=r_{ms',p}$ for all positive integers $s'$. Since $$|\Ker(f_{ms'}|_B)|=\deg(f_{ms'}|_B)=q^{\dim(B)\cdot ms'},$$ we have $|\Ker_p(f_{ms'}|_B)|=p^{\dim(B)\cdot vms'}$. By the structure theorem of finite abelian groups, we have $$p^{\dim(B)\cdot vms'}=|\Ker_p(f_{ms'}|_B)|\leq (p^{vms'}|K(\mathcal{L})|)^{r_{ms',p}},$$ hence
    $$r_{ms',p}\geq \frac{vms'}{vms'+\log_p|K(\mathcal{L})|} \dim(B).$$ Since $r_{ms',p}$ is an integer, by choosing $s=ms'$ such that $s> \frac{1}{v} (\dim(A)-1)\cdot \log_p|K(\mathcal{L})|$, we have $r_{s,p}=r_{ms',p}\geq \dim(B)$. This proves (2).
\end{proof}

The following, which includes Theorem~\ref{thm: rank main}, follows as a direct corollary of Lemma~\ref{lem: rank main 1}.

\begin{theorem}\label{thm: rank main 1}
Let $f$ be a $q$-polarized self-isogeny of an abelian variety $A$. Then for any prime $p\mid q$, we have $\rank_p \Ker (f^s)\geq \dim(A)$ for some positive integer $s$. In particular, $\rank \Ker (f^s)\geq \dim(A)$ for some positive integer $s$.
\end{theorem}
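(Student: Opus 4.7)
The plan is to deduce Theorem~\ref{thm: rank main 1} as a direct corollary of Lemma~\ref{lem: rank main 1}, applied to the full abelian variety by taking $B = A$. The only input the lemma requires, beyond $f$ being $q$-polarized, is an ample line bundle on $A$ that is numerically $f$-periodic; I would choose it to be the very polarization that witnesses $q$-polarization of $f$.

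To carry this out, I would first observe that by definition of $q$-polarization there is an ample line bundle $H$ on $A$ with $f^*H \sim qH$, hence $f^*H \equiv qH$. Thus $H$ and $f^*H$ are numerically parallel, so $H$ is numerically $f$-stable (in particular numerically $f$-periodic with period $1$). Second, since $f$ is a self-isogeny, $A$ is trivially an $f$-stable subtorus of itself. With $\mathcal{L} := H$ and $B := A$, the hypotheses of Lemma~\ref{lem: rank main 1} are satisfied, and part (2) of the lemma, applied to any prime divisor $p$ of $q$ (such a $p$ exists since $q > 1$), gives the desired bound $\rank_p \Ker(f^s) \geq \dim(A)$ for every integer $s > (\dim(A) - 1)\log_p |K(H)|$. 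The ``in particular'' clause is then immediate from the elementary inequality $\rank G \geq \rank_p G$ for any finite abelian group $G$ and prime $p$, a direct consequence of the structure theorem for finite abelian groups.

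There is essentially no obstacle at this stage: all the technical work --- the commutative square involving $\lambda_{\mathcal{L}}$, the Riemann--Roch identity $|K(\mathcal{L})| = \chi(\mathcal{L})^2$, and the counting via the structure theorem that converts an exponent bound into a $p$-rank bound --- is already packaged inside the proof of Lemma~\ref{lem: rank main 1}. The one conceptual point worth stressing is that the ample line bundle required by that lemma is provided for free by the hypothesis of $q$-polarization itself, so the deduction is genuinely a one-line application.
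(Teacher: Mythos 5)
Your proposal is correct and follows exactly the route the paper takes: the paper states Theorem~\ref{thm: rank main 1} as a direct corollary of Lemma~\ref{lem: rank main 1}, and your choices $B = A$ and $\mathcal{L} = H$ (the polarizing line bundle, which is numerically $f$-stable since $f^*H \equiv qH$) are precisely the natural instantiation that makes the corollary immediate.
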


As another corollary of Lemma~\ref{lem: rank main 1}, when $\rho(A)\leq 2$, we have the following uniform bound for the integer $s$ such that $\rank \Ker(f^s)\geq \dim(A)$.

\begin{proposition}\label{prop: rank main 1 uniform}
Let $A$ be an abelian variety. Then we may choose an integer $s$ depending only on $A$ that satisfies the following.

Let $f$ be a $q$-polarized self-isogeny of $A$. Assume either one of the following conditions.
\begin{enumerate}
    \item Every element in $\NS(A)$ is numerically $f$-periodic.
    \item $(f^k)^*$ acts as a scalar multiplication map on $\NS(A)$ for some integer $k \ge 1$.
    \item $\rho(A)\leq 2$.
\end{enumerate}
Then for any prime $p\mid q$, we have $\rank_p \Ker (f^s)\geq \dim(A)$. In particular, $\rank \Ker (f^s)\geq \dim(A)$.
\end{proposition}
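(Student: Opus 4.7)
The plan is to reduce the conclusion in each of the three cases to a single application of Lemma~\ref{lem: rank main 1} with $B=A$ and an ample line bundle $\mathcal{L}_0$ chosen to depend only on $A$, not on $f$. First I would fix once and for all any ample line bundle $\mathcal{L}_0$ on $A$, so that $|K(\mathcal{L}_0)|$ depends only on $A$, and take $s$ to be any integer strictly larger than $(\dim(A)-1)\log_2|K(\mathcal{L}_0)|$. Because $\log_p|K(\mathcal{L}_0)|\le \log_2|K(\mathcal{L}_0)|$ for every prime $p\ge 2$, this single $s$ will satisfy the numerical hypothesis of Lemma~\ref{lem: rank main 1}(2) simultaneously for every prime $p\mid q$, regardless of $q$.

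It then remains to verify, in each case, that $\mathcal{L}_0$ itself is numerically $f$-periodic, since only then may Lemma~\ref{lem: rank main 1} be invoked. Under condition (1) this is immediate from the assumption applied to $[\mathcal{L}_0]\in \NS(A)$. Under condition (2), the scalar action of $(f^k)^*$ on $\NS(A)_{\Q}$ forces $(f^k)^*\mathcal{L}_0 \equiv c\cdot \mathcal{L}_0$ for some nonzero rational $c$, so $\mathcal{L}_0$ is numerically $f$-periodic by definition.

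For condition (3) the strategy is to show that it forces condition (2) with $k\le 2$. First I would establish semisimplicity of $f^*$ on $\NS(A)_{\Q}$: by the proof of Lemma~\ref{lem: eigenvalues of polarized map}, $f^*$ is diagonalizable on $H^{1,1}(A,\C)$ with all eigenvalues of modulus $q$, and $\NS(A)_{\R}$ is an $f^*$-stable real subspace of $H^{1,1}(A,\R)$. Next I would analyze the characteristic polynomial of $f^*|_{\NS(A)_{\Q}}$: it is monic with integer coefficients and admits $q$ as a root (corresponding to the polarization class $H$ with $f^*H\equiv qH$). When $\rho(A)=1$ the action is forced to be the scalar $q$. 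When $\rho(A)=2$, the other root is rational and an algebraic integer of modulus $q$, hence equal to $\pm q$; combining with semisimplicity, $(f^2)^*$ acts on $\NS(A)_{\Q}$ as the scalar $q^2$. In either subcase, condition (2) holds with $k\le 2$.

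The main obstacle is precisely this eigenvalue analysis under condition (3): coupling semisimplicity of the $\NS$-action (extracted from the Hodge-theoretic input of Lemma~\ref{lem: eigenvalues of polarized map}) with the modulus-$q$ constraint and the integrality of the characteristic polynomial on the lattice $\NS(A)$. Once these arithmetic inputs are in place, the rest is the routine bookkeeping set up in the first two paragraphs, which uniformly produces the desired $s$ from Lemma~\ref{lem: rank main 1}(2).
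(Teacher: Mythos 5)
Your proposal is correct and follows essentially the same route as the paper's proof: fix an ample $\mathcal{L}$ depending only on $A$, show that each of the three hypotheses makes $\mathcal{L}$ numerically $f$-periodic (the paper disposes of case (3) in a parenthetical remark that $(f^2)^*$ is scalar on $\NS(A)$ when $\rho(A)\le 2$, which you justify in more detail via Lemma~\ref{lem: eigenvalues of polarized map}, integrality of the characteristic polynomial on $\NS(A)$, and the modulus-$q$ constraint), and then invoke Lemma~\ref{lem: rank main 1}(2). Your explicit observation that $\log_p|K(\mathcal{L}_0)|\le\log_2|K(\mathcal{L}_0)|$ makes the uniformity of $s$ over all primes $p\mid q$ transparent, a point the paper leaves implicit.
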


\begin{proof}
    We fix an ample line bundle $\mathcal{L}$ on $A$. Let $f$ be a $q$-polarized self-isogeny of $A$ such that the pair $(A,f)$ satisfies one of our conditions above. Then $\mathcal{L}$ is numerically $f$-periodic (we remark that when $\rho(A)\leq 2$, $(f^2)^*$ acts as a scalar multiplication on the N\'eron-Severi group $\NS(A)$). By Lemma~\ref{lem: rank main 1}, we have $$\rank_p \Ker (f^s)\geq \dim(A)$$ for all $s>\log_p|K(\mathcal{L})|\cdot (\dim(A)-1)$. This completes the proof.
\end{proof}

In Lemma~\ref{lem: rank main 1}, we can take $s=1$ if $A$ is principally polarized by a numerically $f$-periodic line bundle. Hence, we obtain the following.

\begin{corollary}\label{cor: explicit index s principal}
    Let $A$ be an abelian variety and let $f$ be a $q$-polarized self-isogeny of $A$. Suppose that $A$ is principally polarized by a numerically $f$-periodic line bundle $\mathcal{L}$. Then for any prime $p\mid q$, we have $\rank_p \Ker (f)\geq \dim(A)$. In particular, $\rank \Ker (f)\geq \dim(A)$.
\end{corollary}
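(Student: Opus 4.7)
The plan is to apply Lemma~\ref{lem: rank main 1}(2) directly, taking the subtorus $B = A$ and using the fact that the principal polarization trivializes the constant $|K(\mathcal{L})|$ appearing in the iteration bound. First, I would record that, since $\mathcal{L}$ is a principal polarization, $\lambda_{\mathcal{L}}\colon A \to A^\vee$ is an isomorphism, so $|K(\mathcal{L})| = \chi(\mathcal{L})^2 = 1$. Consequently, $\log_p |K(\mathcal{L})| = 0$ for every prime $p$.

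Next I would check the hypotheses of Lemma~\ref{lem: rank main 1}(2): the subtorus $B = A$ is trivially $f$-stable, and $\mathcal{L}$ is numerically $f$-periodic by assumption. The lemma then delivers $\rank_p \Ker(f^s) \geq \dim(A)$ for every integer $s$ satisfying $s > (\dim(A) - 1)\cdot \log_p|K(\mathcal{L})| = 0$, so in particular for $s = 1$. This yields $\rank_p \Ker(f) \geq \dim(A)$ for every prime $p \mid q$, and the second assertion follows at once from the elementary inequality $\rank \Ker(f) \geq \rank_p \Ker(f)$.

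There is no real obstacle here: the corollary is essentially Lemma~\ref{lem: rank main 1}(2) applied in the degenerate regime where the principal polarization forces the iteration threshold down to $s = 1$. The only verification needed is the computation $|K(\mathcal{L})| = 1$, which is immediate from the definition of a principal polarization.
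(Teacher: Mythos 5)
Your proposal is correct and follows exactly the paper's own argument: observe that a principal polarization gives $|K(\mathcal{L})|=1$, so the iteration threshold $(\dim(A)-1)\log_p|K(\mathcal{L})|$ in Lemma~\ref{lem: rank main 1}(2) collapses to $0$, allowing $s=1$ with $B=A$. You merely spell out the details the paper leaves implicit.
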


\begin{proof}
    Notice that $|K(\mathcal{L})|=1$. The assertion follows directly from Lemma~\ref{lem: rank main 1}.
\end{proof}

The following example shows that the rank bound in Lemma~\ref{lem: rank main 1}, and hence in Theorem~\ref{thm: rank main 1} and Proposition~\ref{prop: rank main 1 uniform}, is optimal.

\begin{example}\label{ex: optimal rank bound for rank main}
    Let $E=\mathbb{C}/\mathbb{Z}[i]$ be an elliptic curve, and let $A=E\times\cdots \times E$ be the product of $n$ copies of $E$. We consider the map $f$ given by $\mathrm{diag}(4+3i,\cdots, 4+3i)$. Then $\Ker(f^s)=(\mathbb{Z}/25^s\mathbb{Z})^n$ by Remark~\ref{rem: simple lemma to check kernel is cyclic}. Hence $\rank \Ker(f^s)=n=\dim(A)$ for all $s$.
\end{example}

\section{Essential dimensions of polarized self-isogenies}\label{sect: isogenies}

In this section, we study the essential dimensions of polarized self-isogenies of abelian varieties.

\subsection{Proof of {Theorem~\ref{thm: ed main} and Proposition~\ref{thm: quasi-abelian and non-uniruled}}}\label{section 4.1}

In this subsection, we discuss the essential dimension for a polarized self-isogeny $f$ of an abelian variety whose subtori are all $f$-periodic.

Recall that Prokhorov and Shramov proved the Jordan property for birational automorphism groups of rationally connected varieties of fixed dimension (\cite[Theorem~1.8 and Theorem~1.10]{PS14}) assuming the Borisov–Alexeev–Borisov conjecture (which is now a theorem of Birkar \cite{Bir21}). We recall one of their byproducts in the proof of \cite[Theorem~1.10]{PS14}, which will later play an important role in the proof of Theorem~\ref{thm: main with all subtori f numerically periodic} (hence Theorem~\ref{thm: ed main}). For the reader's convenience, we also give a proof here.

\begin{lemma}[{\cite[Theorem~1.10]{PS14}}]\label{lem: uniform finite index embedding to general linear group}
    Fix dimension $n$. Then there exists a constant $J=J(n)$ depending only on $n$ satisfying the following.
    For any rationally connected variety $X$ of dimension $n$ and any finite subgroup $G\subset \mathrm{Bir}(X)$, there exists an abelian subgroup $H\leq G$ with an embedding $H\leq \mathrm{GL}(n,\mathbb{C})$ such that $[G:H]\leq J(n)$. In particular, $\rank H\leq n$.
\end{lemma}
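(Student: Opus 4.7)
The plan is to follow Prokhorov--Shramov \cite{PS14}, whose main inputs are $G$-equivariant resolution of singularities, the $G$-equivariant MMP in characteristic zero, the boundedness of Fano varieties of Picard number one with terminal singularities in bounded dimension (Birkar's theorem \cite{Bir21}), and induction on $n = \dim X$. First, after $G$-equivariant resolution I may replace $X$ by a smooth projective $G$-variety. Running the $G$-equivariant MMP terminates, since $X$ is rationally connected and hence $K_X$ is not pseudoeffective, at a $G$-equivariant Mori fiber space $\pi \colon Y \to Z$ with $Y$ $\Q$-factorial terminal and $\dim Z < n$.

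Then I split into cases according to $\dim Z$. If $\dim Z = 0$, then $Y$ is a terminal Fano of Picard number one and dimension $n$; by Birkar's theorem such $Y$ form a bounded family, so $\Aut(Y)$ embeds into a linear algebraic group of dimension bounded in terms of $n$, and the classical Jordan theorem for $\GL(N, \C)$ produces an abelian subgroup $H \le G$ whose index is bounded by a constant depending only on $n$, together with a faithful linear representation of $H$. If $\dim Z \ge 1$, then $Z$ is rationally connected of strictly smaller dimension (by Graber--Harris--Starr) and the geometric generic fiber $F$ of $\pi$ is also rationally connected of dimension less than $n$. From the exact sequence
\begin{equation*}
1 \longrightarrow G_F \longrightarrow G \longrightarrow G_Z \longrightarrow 1,
\end{equation*}
with $G_Z$ the image of $G$ in $\Bir(Z)$ and $G_F$ the kernel acting faithfully on a general fiber, the inductive hypothesis applied to $G_F$ and $G_Z$ yields abelian subgroups of bounded indices; combining a preimage of the one in $G_Z$ with the one in $G_F$, and passing to a further finite-index subgroup to guarantee overall commutativity, produces an abelian subgroup $H \le G$ of index bounded by some $J(n)$ depending only on $n$.

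To realize the embedding $H \hookrightarrow \GL(n, \C)$, I would find a fixed point $p$ of $H$ on a smooth projective $H$-equivariant birational model of $X$ and consider the induced action on the tangent space $T_p X \cong \C^n$. Since $H$ is finite abelian, its action at a fixed point on a smooth variety is linearizable, hence faithful on $T_pX$, giving an injection $H \hookrightarrow \GL(n, \C)$. The final assertion $\rank H \le n$ then follows from the standard fact that a finite abelian subgroup of $\GL(n, \C)$ has rank at most $n$, by simultaneous diagonalization. The main obstacle will be producing the fixed point for $H$: this typically relies on a Lefschetz-type argument using that a smooth projective rationally connected $X$ satisfies $h^i(\cO_X) = 0$ for $i > 0$, so the holomorphic Lefschetz number of every finite-order automorphism is nonzero and its fixed locus is nonempty; combined with $H$-equivariant resolution this places us in the smooth setting required for the tangent-space linearization, and a careful book-keeping of the constants through the inductive step yields the uniform Jordan constant $J = J(n)$.
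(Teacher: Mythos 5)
The paper's proof is much shorter than yours: it cites \cite[Theorem~4.2]{PS14} as a black box, which directly provides a constant $J_1 = J_1(n)$ and a subgroup $H' \le G$ of index $\le J_1$ that has a \emph{fixed point} on a smooth birational $G$-model $\widetilde X$; it then linearizes $H'$ at that point to embed $H' \hookrightarrow \GL(n,\C)$, and only \emph{afterwards} applies the classical Jordan theorem for $\GL(n,\C)$ to extract the abelian $H \le H'$. You instead attempt to reprove Prokhorov--Shramov's fixed-point theorem from scratch via the $G$-equivariant MMP and induction on the base of the Mori fiber space, which is essentially the structure of their argument; this is legitimate in spirit but reproduces a substantial amount of \cite{PS14} that the paper simply invokes.

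There is, however, a genuine gap in the order of operations in your last paragraph. You first produce the abelian subgroup $H \le G$ of bounded index and \emph{then} try to find a fixed point of $H$ so as to linearize it. But a finite abelian group acting on a smooth projective rationally connected variety need not have a common fixed point, and your holomorphic Lefschetz argument only gives that each \emph{individual} element has nonempty fixed locus, not that the fixed loci meet. The simplest counterexample is the Klein group $\Z/2\times\Z/2$ acting on $\PP^1$ by $x \mapsto -x$ and $x \mapsto 1/x$: it is abelian, has bounded index (indeed index $1$) in itself, every element has fixed points, yet the fixed loci $\{0,\infty\}$ and $\{\pm 1\}$ are disjoint, so the group has no fixed point and does not embed in $\GL(1,\C)$. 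To repair this one must run the argument in the other order, as in \cite[Theorem~4.2]{PS14}: first produce a subgroup of bounded index (not assumed abelian) with a fixed point, linearize it into $\GL(n,\C)$, and only then apply Jordan for $\GL(n,\C)$. A secondary issue is that your inductive gluing step (``combining a preimage of the one in $G_Z$ with the one in $G_F$, and passing to a further finite-index subgroup to guarantee overall commutativity'') is not automatic: the preimage in $G$ of an abelian subgroup of $G_Z$ is only an extension by $G_F$ and need not contain an abelian subgroup of uniformly bounded index without additional input; this is exactly where the substance of the Jordan property lies, and it cannot be waved away.
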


\begin{proof}
    Let $X$ be a rationally connected variety of dimension $n$, and let $G \leq \mathrm{Bir}(X)$ be a finite group. Let $\widetilde{X}$ be a regularization of $G$, i.e. $\widetilde{X}$ is a projective variety with an action of $G$ and a $G$-equivariant birational map $\xi: \widetilde{X} \dashrightarrow X$ (see \cite[Theorem~1.4]{deFE02}). Taking a $G$-equivariant resolution of singularities (\cite[Theorem~13.2]{BM97}), one can assume that $\widetilde{X}$ is smooth. Note that $\widetilde{X}$ is rationally connected since so is $X$. By \cite[Theorem~4.2]{PS14} there is a constant $J_1$ depending only on $n$ such that there exists a subgroup $H' \subset G$ of index at most $J_1$ and a point $x \in \widetilde{X}$ fixed by $H'$. The action of $H'$ on the Zariski tangent space $T_x(\widetilde{X})$ is faithful (cf. \cite[Lemma~2.2]{KS13}). Hence, we have an embedding $H'\leq \mathrm{GL}(n,\mathbb{C})$. By the classical Jordan property of $\mathrm{GL}(n,\mathbb{C})$, there is a constant $J_2$ depending only on $n$ such that $H'$ has an abelian subgroup $H$ of index at most $J_2$. We may take $J=J_1J_2$. This completes the proof.
\end{proof}

Let $J$ be the constant in Lemma~\ref{lem: uniform finite index embedding to general linear group}. We give a sufficient condition for incompressibility.

\begin{lemma}\label{lem: main lemma with all subtori f stable}
    Let $A$ be an abelian variety and $f$ a $q$-polarized self-isogeny of $A$. Suppose that
    \begin{enumerate}
        \item every subtorus of $A$ is $f$-stable, and
        \item there exists a constant $C\in \Z_{>0}$ such that the exponent $\mathrm{exp}(\mathrm{Ker}(f^k))$ divides $Cq^k$ for each positive integer $k$.
    \end{enumerate}
    Then $f^s$ is incompressible for all positive integers $s>\log_q(J)+\dim(A)\cdot \log_q C$.
\end{lemma}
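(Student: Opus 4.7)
The plan is to argue by contradiction: suppose $\mathrm{ed}(f^s) = d < \dim(A)$. Since $f^s \colon A \to A$ is a Galois cover with abelian group $G_s := \Ker(f^s)$ acting by translation, the classical reinterpretation of essential dimension for Galois covers produces a $G_s$-equivariant dominant rational map $\phi \colon A \dashrightarrow X_0$ where $\dim X_0 = d$ and $G_s \le \Bir(X_0)$ acts faithfully (generically freely). My goal will be to contradict the hypothesis $s > \log_q J + \dim(A) \cdot \log_q C$ by analyzing the birational geometry of $X_0$.

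First I will establish a dichotomy: either $X_0$ is rationally connected or $X_0$ is non-uniruled. This follows from considering the MRC fibration $X_0 \dashrightarrow Z$ (with $Z$ non-uniruled). By rigidity for morphisms from abelian varieties, the composition $A \dashrightarrow X_0 \dashrightarrow Z$ factors through a quotient $A \to A/B$ by some subtorus $B \le A$, with the induced map $A/B \dashrightarrow Z$ generically finite. Hence the general fibers of $A \dashrightarrow Z$ are disjoint unions of translates of $B$. If $\dim Z < \dim X_0$, each such translate would dominate the positive-dimensional rationally connected general fiber of $X_0 \dashrightarrow Z$; but abelian varieties admit no non-constant rational maps to rationally connected varieties, since they contain no rational curves. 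This forces either $Z = X_0$ (so $X_0$ is non-uniruled) or $\dim Z = 0$ (so $X_0$ is rationally connected).

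In the non-uniruled case, $\phi$ factors as $A \to A/B \dashrightarrow X_0$ with $\dim B = \dim A - d > 0$, and by hypothesis $B$ is $f$-stable. Then $f^s|_B$ is $q^s$-polarized on $B$, so $|G_s \cap B| = |\Ker(f^s|_B)| = q^{s \dim B} > 1$. On the other hand, $G_s \cap B$ acts trivially on $A/B$ by translation, and therefore by $G_s$-equivariance together with the generic finiteness of $A/B \dashrightarrow X_0$, it acts trivially on $X_0$ as well, contradicting the faithfulness of the $G_s$-action.

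In the rationally connected case, I will invoke Lemma~\ref{lem: uniform finite index embedding to general linear group} applied to the finite subgroup $G_s \le \Bir(X_0)$: it produces an abelian subgroup $H \le G_s$ of index at most $J = J(d)$ admitting an embedding $H \hookrightarrow \GL(d, \C)$. Since finite abelian subgroups of $\GL(d, \C)$ are diagonalizable and hence generated by at most $d$ elements, I obtain $|H| \le \exp(H)^d \le \exp(G_s)^d \le (Cq^s)^d$ using the exponent hypothesis. Combining with $|G_s| = q^{s \dim A}$ and $[G_s : H] \le J$ yields $q^{s(\dim A - d)} \le J \cdot C^d \le J \cdot C^{\dim A}$, which forces $s \le \log_q J + \dim(A) \cdot \log_q C$ and contradicts our hypothesis. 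The principal obstacle I anticipate is justifying the dichotomy cleanly: ensuring the MRC factorization respects the $G_s$-action on a suitable smooth equivariant model of $X_0$, and that the rigidity argument for $A \to A/B$ applies at the level of rational maps to $X_0$.
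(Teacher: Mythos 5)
Your rationally connected case and your numerical estimate match the paper's proof exactly, and your non-uniruled case is a valid (in fact stronger) observation. However, the dichotomy ``$X_0$ is rationally connected or non-uniruled'' is \emph{false}, and the justification you give for it contains a genuine error. You claim that abelian varieties admit no non-constant rational maps to rationally connected varieties because they contain no rational curves; but the Weierstrass quotient $E \to E/\{\pm 1\} \cong \mathbf{P}^1$ is already a non-constant finite map from an abelian variety onto a rationally connected variety. (What is true is that no \emph{non-constant map from} $\mathbf{P}^1$ \emph{into} an abelian variety exists; the direction matters.) Consequently, the ``intermediate'' case $0 < \dim Z < \dim X_0$ -- where $X_0$ is uniruled but not rationally connected, e.g.\ birational to (abelian)$\times$(rationally connected) -- can and does occur, and your argument handles neither branch there: the Jordan lemma (Lemma~\ref{lem: uniform finite index embedding to general linear group}) requires rational connectedness of $X_0$, which fails, and the triviality argument via $G_s\cap B$ requires $A/B \dashrightarrow X_0$ to be generically finite, which also fails.

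The paper's proof treats all cases uniformly precisely to avoid this gap: by \cite[Corollary~7]{KZ}, the Albanese map $g_s \colon Y_s \to \Alb(Y_s)$ of the compressed variety $Y_s$ (your $X_0$) has rationally connected general fiber $F_s$, and $A \to \Alb(Y_s)$ is a surjective homomorphism with kernel $B_s$. One then restricts to $G_s := B_s \cap \Ker(f^s) = \Ker(f^s|_{B_s})$, which acts faithfully on $F_s$, and applies the Jordan lemma to $G_s \le \Bir(F_s)$ rather than to all of $\Ker(f^s)$ on $Y_s$. The $f$-stability of $B_s$ gives $|G_s| = q^{s\dim B_s}$, the exponent bound passes to the subgroup $G_s$, and the key identity $\dim B_s - \dim F_s = \dim A - \dim Y_s$ reproduces exactly the numerical inequality you derived in the rationally connected case, now valid with no case split. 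So the fix is not to repair the dichotomy but to discard it: work with the RC fibers of $g_s$ and the subgroup $G_s$, which recovers your two cases as the extremes $\dim \Alb(Y_s) = 0$ and $\dim F_s = 0$ and closes the gap in between.
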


\begin{proof}
    Fix a positive integer $s$. By our assumption, we have a birational base change diagram: 
    $$\xymatrix{
    A \ar[r]^{f^s} \ar@{-->}[d] & A \ar@{-->}[d] \\
    Y_s \ar[r]^{h_s} & Y_s'
    }$$
    where the horizontal maps are finite and $\dim (Y_s)=\dim (Y_s')=\mathrm{ed}(f^s)\leq \dim(A)$. We may assume that
    \begin{enumerate}
        \item $Y_s, Y_s'$ are normal (or smooth by \cite[Theorem~13.2]{BM97}),
        \item the rational maps $A \dashrightarrow Y_s'$ and $A \dashrightarrow Y_s$ have connected general fibers, and
        \item $\Ker(f^s)$ acts birationally and faithfully on $Y_s$.
    \end{enumerate}
    Let $g_s: Y_s\to \mathrm{Alb}(Y_s)$ be the Albanese map. By \cite[Corollary~7]{KZ}, $g_s$ is surjective with rationally connected general fibers, and we have an induced surjective morphism of abelian varieties $h_s: A\to \mathrm{Alb}(Y_s)$. Let $B_s:=\Ker (h_s)$. We set $$G_s:=B_s\cap \Ker(f^s)=\Ker(f^s|_{B_s}).$$
    Then $G_s$ acts birationally and faithfully on a general fiber $F_s$ of $g_s$. Since $B_s$ is $f^s$-stable, $f^s|_{B_s}$ is $q^s$-polarized, hence $$|G_s|= \deg(f^s|_{B_s}) = q^{\dim(B_s)\cdot s}.$$ By Lemma~\ref{lem: uniform finite index embedding to general linear group}, there exists a constant $J$ depending only on $n$ and an abelian subgroup $H_s\leq G_s$ such that $[G_s :H_s]\leq J$ and $H_s\leq \mathrm{GL}(\dim(F_s),\mathbb{C})$. In particular, $$\rank(H_s)\leq \dim(F_s)=\dim(B_s)-(\dim(A)-\dim(Y_s)) \leq \dim (B_s).$$ By our construction, the exponent of $H_s$ satisfies $\exp(H_s)\leq \exp(G_s)\leq Cq^s$. Hence $$q^{\dim(B_s)\cdot s}=|G_s| \le J\cdot |H_s|\leq J\cdot (Cq^s)^{\dim(F_s)}.$$ It follows that 
    \begin{align}\label{eq: control of the essential dimension and dimension}
        q^{(\dim(A)-\mathrm{ed}(f^s))\cdot s}=q^{(\dim(B_s)-\dim(F_s))\cdot s}\leq J\cdot C^{\dim(F_s)}\leq J\cdot C^{\dim(A)}.
    \end{align}
    Now we take $s>\log_q(J)+\dim(A)\cdot \log_q C$. If $\dim(A)>\mathrm{ed}(f^s)$, then by \eqref{eq: control of the essential dimension and dimension} we have $$(J\cdot C^{\dim(A)})^{(\dim(A)-\mathrm{ed}(f^s))}<q^{(\dim(A)-\mathrm{ed}(f^s))\cdot s}\leq J\cdot C^{\dim(A)},$$ a contradiction. It follows that $\dim(A)=\mathrm{ed}(f^s)$.
\end{proof}

\begin{remark}\label{rem: dim two case choose the constant J}
    Note that in Lemma~\ref{lem: main lemma with all subtori f stable}, if $\dim(A)=2$, then $\dim(F_s)\leq 1$. The only smooth rationally connected variety of dimension one is $\mathbf{P}^1$. By the classification of finite subgroups of $\mathrm{Aut}(\mathbf{P}^1)=\mathrm{PGL}(2)$ (cf. \cite{Bea10}), finite abelian subgroups of $\mathrm{PGL}(2)$ are either cyclic or equal to the Klein group. Hence, we can take $J=2$ in this case.
\end{remark}

Now we are ready to prove our main theorem below, which implies Theorem~\ref{thm: ed main}.

\begin{theorem}\label{thm: main with all subtori f numerically periodic}
        Let $A$ be an abelian variety and $f$ a polarized endomorphism of $A$. If every subtorus of $A$ is numerically $f$-preperiodic, then $f^s$ is incompressible for some positive integer $s$.
\end{theorem}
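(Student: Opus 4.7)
The plan is to replace $f$ by a suitable iterate $f^v$ that stabilizes every subtorus of $A$, and then invoke Lemma~\ref{lem: main lemma with all subtori f stable} to conclude that a further iterate is incompressible. The key point is that the hypothesis ``numerically preperiodic'' has already been upgraded to ``stable after iteration'' by the preliminary work in Section~2.

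First, I would apply Lemma~\ref{lem: uniform stable index}(2): since every subtorus of $A$ is numerically $f$-preperiodic by hypothesis, there is a positive integer $v$ so that every subtorus of $A$ is $f^v$-stable. Set $F := f^v$. If $f$ is $q$-polarized with respect to an ample line bundle $\mathcal{L}$, then $F$ is $q^v$-polarized, and this verifies hypothesis~(1) of Lemma~\ref{lem: main lemma with all subtori f stable} for $F$.

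Next, I would verify hypothesis~(2) of Lemma~\ref{lem: main lemma with all subtori f stable} for $F$, namely a uniform bound of the form $\mathrm{exp}(\mathrm{Ker}(F^k)) \mid C \cdot (q^v)^k$ for every $k \ge 1$. This is precisely the ``moreover'' clause of Lemma~\ref{lem: rank main 1}: from $f^*\mathcal{L} \sim q\mathcal{L}$ we obtain $F^*\mathcal{L} \equiv \mathcal{L}^{\otimes q^v}$, and the cited clause then yields
\[
\mathrm{exp}\bigl(\mathrm{Ker}(F^k)\bigr) \,\Big|\, (q^v)^k \cdot |K(\mathcal{L})|,
\]
so one can take $C := |K(\mathcal{L})|$. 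Applying Lemma~\ref{lem: main lemma with all subtori f stable} to $F$, I obtain that $F^t = f^{vt}$ is incompressible whenever $t > \log_{q^v}(J) + \dim(A) \cdot \log_{q^v}|K(\mathcal{L})|$, and setting $s := vt$ for such $t$ proves the theorem. After clearing logarithms this also yields the explicit uniform bound $s > \log_q(J) + \dim(A)\cdot \log_q |K(\mathcal{L})|$ advertised in the introduction.

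The genuine work has already been absorbed into Lemma~\ref{lem: main lemma with all subtori f stable}, whose proof uses a base change to the Albanese image of a suitable birational model, the faithful birational action of $\Ker(f^s)$ on a rationally connected general fibre, and the Jordan-type theorem of Prokhorov--Shramov to produce a large abelian subgroup embedded in $\mathrm{GL}(n,\mathbb{C})$ whose rank is bounded by the fibre dimension. The present statement is then a clean assembly of three ingredients: the periodic-to-stable reduction (Lemma~\ref{lem: uniform stable index}), the exponent bound coming from the dual isogeny diagram (Lemma~\ref{lem: rank main 1}), and the Jordan-based incompressibility engine (Lemma~\ref{lem: main lemma with all subtori f stable}); no further essentially new idea is needed.
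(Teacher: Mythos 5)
Your proof is correct and follows essentially the same route as the paper: reduce to the $f$-stable case via Lemma~\ref{lem: uniform stable index}(2), obtain the exponent bound $\exp(\Ker(F^k))\mid (q^v)^k|K(\mathcal{L})|$ from the ``moreover'' clause of Lemma~\ref{lem: rank main 1}, and then apply Lemma~\ref{lem: main lemma with all subtori f stable} with $C=|K(\mathcal{L})|$; your observation that $\log_{q^v}$ rescales by $1/v$ so that $s=vt$ recovers the advertised bound is also right. The one small step you left implicit is the preliminary reduction from a polarized endomorphism to a polarized self-isogeny (replace $f$ by $f\circ t_{-f(0)}$, using $(f\circ t_x)^s=f^s\circ t_{x_s}$ so that $\mathrm{ed}$ of iterates is unchanged), which is needed because Lemmas~\ref{lem: uniform stable index} and \ref{lem: rank main 1} are stated for self-isogenies.
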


\begin{proof}
    By replacing $f$ with its translation, we may assume that $f$ is a self-isogeny. In fact, for any positive integer $s$ and any $x\in A$, we have $(f\circ t_x)^s=f^s\circ t_{x_s}$ for some $x_s\in A$. Hence $\mathrm{ed}((f\circ t_x)^s)=\mathrm{ed}(f^s)$. By Lemma~\ref{lem: uniform stable index}(2), we may replace $f$ with an iterate so that every subtorus of $A$ is $f$-stable.

    Since $f$ is polarized, there exists an ample line bundle $\mathcal{L}$ on $A$ and a positive integer $q>1$ such that $f^*\mathcal{L}\equiv \mathcal{L}^{\otimes q}$. By Lemma~\ref{lem: rank main 1}, we have 
    \begin{align}\label{eq: control of the exponent}
        \exp(\Ker (f^k))\ \big|\   q^k|K(\mathcal{L})|
    \end{align}
    for each positive integer $k$. The assertion follows from Lemma~\ref{lem: main lemma with all subtori f stable} by taking $C=|K(\mathcal{L})|$.
\end{proof}

\begin{proof}[Proof of Proposition~\ref{thm: quasi-abelian and non-uniruled}]
Since $X$ is non-uniruled, $X$ is $Q$-abelian (cf. \cite[Theorem~3.4]{NZ} and \cite[Theorem~1.21]{GKP13}). Now let $A \to X$ be the Albanese closure in codimension one as defined in \cite[Lemma 2.12]{NZ}. Then the proposition follows from \cite[Proposition 5.4, Step 2]{NZ}. Indeed, for the last part, for $f_A^s: A = A_1 \to A = A_2$, the $A_1$ is the normalization of the fibre product of $f^s$ and $\sigma: A_2 \to X$. This completes the proof.
\end{proof}

\subsection{Isogenous to products of elliptic curves case: Proof of Theorem~\ref{thm: essential dimension isogenous to product elliptic curves intro}}

In this subsection, we focus on essential dimensions of polarized endomorphisms when $A$ is isogenous to products of elliptic curves. We start with a lemma on the structures of subtori and polarized self-isogenies of $A$.

\begin{lemma}\label{lem: basic property of polarized end of product abelian}
  Let $A$ be an abelian variety that is isogenous to the product of elliptic curves. Let $f$ be a $q$-polarized self-isogeny of $A$. Then the following are equivalent.
  \begin{enumerate}
      \item $f^*|_{\NS(A)_{\mathbb{Q}}}$ is a scalar multiplication.
      \item Every subtorus of $A$ is $f$-stable.
      \item Every one-dimensional subtorus of $A$ is $f$-stable.
  \end{enumerate}
  Under the above equivalent conditions, we have the exponent $\mathrm{exp}(\Ker(f))$ divides $q$ and $$\rank_p \Ker(f|_B)\geq \dim(B)$$ for any prime $p\mid q$ and any subtorus $B$ of $A$.

  Moreover, if every one-dimensional subtorus of $A$ is numerically $f$-preperiodic, then there exists an integer $s$ depending on $f$ such that every subtorus of $A$ is $f^s$-stable.
\end{lemma}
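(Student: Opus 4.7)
The plan is to prove the equivalences $(1)\Leftrightarrow(2)\Leftrightarrow(3)$, then derive the exponent and rank bounds under those conditions, and finally handle the iteration claim.

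\emph{Equivalences.} The implication $(2)\Rightarrow(3)$ is immediate. For $(1)\Rightarrow(2)$, given a subtorus $B\subset A$, Lemma~\ref{lem: numerically parallel abelian varieties}(2) shows $f(B)$ is a subtorus through the origin of dimension $\dim B$ (the origin because $f$ is a self-isogeny, so $f(0)=0$). If $f(B)\not\subset B$, let $\pi: A\to T:=A/B$; then $\pi\circ f|_B:B\to T$ has positive-dimensional image. Picking any ample $H_T$ on $T$ and a curve $C\subset B$ with $(\pi\circ f)(C)$ positive-dimensional yields $(\pi\circ f|_B)^*H_T\cdot C>0$. But $(1)$ gives $f^*(\pi^*H_T)\equiv q\pi^*H_T$, which restricts to $0$ on $B$ because $\pi(B)=\{0\}$; intersecting with $C$ yields $0$, a contradiction. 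For $(3)\Rightarrow(1)$, each $1$-dim subtorus $E$ is $f$-stable, and restricting $f^*H\equiv qH$ to $E$ shows $f|_E$ is a degree-$q$ self-isogeny of $E$; hence $f_*E=qE$ and the projection formula gives $(f^*\mathcal M)\cdot E=q(\mathcal M\cdot E)$ for every $\mathcal M\in\NS(A)$. One then concludes $f^*\mathcal M\equiv q\mathcal M$ from the claim that classes of $1$-dim subtori span $N_1(A)_{\Q}$ when $A$ is isogenous to a product of elliptic curves, proved by induction on $\dim A$ using Lemma~\ref{lem: basic property of abelian Neron Severi}(1) and ultimately reducing to an explicit Hermitian non-degeneracy check on $\prod E_i^{n_i}$.

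\emph{Exponent and rank.} Under $(1)$, numerical and algebraic equivalence agree on $A$, so $f^*\mathcal L\equiv q\mathcal L$ lifts to the morphism equality $\lambda_{f^*\mathcal L}=q\lambda_{\mathcal L}$, i.e., $f^{\vee}\lambda_{\mathcal L}f=q\lambda_{\mathcal L}$. Evaluating at $x\in\Ker(f)$ gives $q\lambda_{\mathcal L}(x)=0$, so $qx\in K(\mathcal L)$ for every $\mathcal L\in\NS(A)$. I would then exhibit line bundles $\mathcal L_1,\ldots,\mathcal L_m$ on $A$ whose kernels intersect trivially: on $\prod E_i$, the kernels of $p_i^*\mathcal O(0_{E_i})$ are the complementary codimension-$1$ coordinate subtori, which intersect in $\{0\}$, and this property transfers to $A$ via norms under the isogeny. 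Hence $qx=0$, giving $\exp(\Ker(f))\mid q$. For the rank bound, apply the same exponent result to $f|_B:B\to B$, which is a $q$-polarized self-isogeny still satisfying $(1)$ (since $\NS(B)_{\Q}=\NS(A)_{\Q}|_B$ by Lemma~\ref{lem: basic property of abelian Neron Severi}(1)) with $B$ again isogenous to a product of elliptic curves; combined with $|\Ker(f|_B)|=q^{\dim B}$ and the structure theorem for finite abelian groups, this forces $\rank_p\Ker(f|_B)\geq\dim B$ for every prime $p\mid q$.

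\emph{Moreover.} If every $1$-dim subtorus of $A$ is numerically $f$-preperiodic, Lemma~\ref{lem: uniform stable index}(1) applied to each one shows it is $f$-periodic. Since the classes of $1$-dim subtori span a finite-dimensional subspace of $TN_w^{n-1}(A)$ (where $n=\dim A$), the scalar-action argument in the proof of Lemma~\ref{lem: uniform stable index}(2) produces a single integer $s$ such that $(f^s)^*$ acts as a scalar on this subspace, forcing every $1$-dim subtorus to be $f^s$-stable. The equivalence $(3)\Rightarrow(2)$ applied to $f^s$ then shows every subtorus of $A$ is $f^s$-stable. The principal obstacle in the whole plan is the pair of spanning statements---classes of $1$-dim subtori spanning $N_1(A)_{\Q}$, and $\bigcap_{\mathcal L}K(\mathcal L)=\{0\}$---both of which rely crucially on the hypothesis that $A$ is isogenous to a product of elliptic curves.
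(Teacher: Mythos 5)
Your proof of $(1)\Rightarrow(2)$ is a valid alternative to the paper's: the paper instead uses the chain of subtori from Lemma~\ref{lem: basic property of abelian Neron Severi}(2) and the restriction formula $\NS(A_k)_{\Q}=(\NS(A)_{\Q})|_{A_k}$ to descend the scalar action one codimension at a time, whereas you argue directly by intersecting with a curve in $B$. Both work, and yours is arguably slicker for this implication.

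The substantive issues are in $(3)\Rightarrow(1)$ and in the exponent bound, and they trace to a common cause: you take an abstract route where the paper works with an explicit complex-matrix representative. For $(3)\Rightarrow(1)$ you reduce to the claim that classes of one-dimensional subtori span $N_1(A)_{\Q}$; you flag this as the principal obstacle but do not prove it. The paper avoids the claim entirely: it writes $f$ as $M_f\in M_{n\times n}(\C)$ on the universal cover, considers the specific one-dimensional subtori $T_i$ (coordinate axes) and $T_{i,j}$ (diagonals between isogenous factors), and shows their $f$-stability forces $M_f=\diag(\lambda_1 I_{n_1},\ldots,\lambda_k I_{n_k})$; then \cite[Theorem~4.3]{PreS12} gives scalarity on $\NS_{\Q}$. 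The key point is that only finitely many well-chosen subtori are needed, and in a form that yields the matrix structure directly.

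For the exponent bound, your route via $f^{\vee}\lambda_{\mathcal L}f=q\lambda_{\mathcal L}$ and the claim $\bigcap_{\mathcal L}K(\mathcal L)=\{0\}$ has two weak points. First, the line bundles $p_i^*\mathcal O(0_{E_i})$ whose kernels you propose to intersect live on $\prod E_i$, not on $A$; the ``transfer via norms'' is not automatic, because $h^*\colon\NS(A)\hookrightarrow\NS(\prod E_i)$ has finite cokernel and the $p_i^*\mathcal O(0_{E_i})$ need not lie in the image, and passing to integer multiples $N\mathcal L_i$ enlarges the kernels (since $\lambda_{N\mathcal L}=N\lambda_{\mathcal L}$), so the intersection argument requires genuine care. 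Second, this machinery is doing more work than needed. The paper's argument is shorter and bypasses both issues: once $M_f=\diag(\lambda_1 I_{n_1},\ldots,\lambda_k I_{n_k})$ is in hand, Lemma~\ref{lem: complex structure of endomorphism ring of abelian variety} gives $\lambda_i+\bar\lambda_i\in\Z$, so the conjugate matrix $\diag(\bar\lambda_1 I_{n_1},\ldots,\bar\lambda_k I_{n_k})$ also preserves the lattice and defines a self-isogeny $\bar f$ with $\bar f\circ f=q\cdot\id_A$; this immediately yields $\exp(\Ker(f))\mid q$. The rank bound and the ``Moreover'' part in your plan match the paper's. In short: your $(1)\Rightarrow(2)$ and the last two parts are fine, but $(3)\Rightarrow(1)$ and the exponent bound rest on unproved spanning/kernel claims that the paper deliberately avoids by working in explicit matrix coordinates, which is the design that makes Lemma~\ref{lem: complex structure of endomorphism ring of abelian variety} applicable.
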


\begin{proof}
First we prove the equivalence of (1), (2) and (3).

\medskip

We first assume (1) and prove (2). We have every codimension-one subtorus is $f$-stable; see Lemma~\ref{lem: numerically parallel abelian varieties}. Let $B$ be a subtorus of codimension $m < n$. We claim that $B$ is $f$-stable. Indeed, by Lemma~\ref{lem: basic property of abelian Neron Severi}(2), we have a sequence
$B = A_m \subset A_{m-1} \subset \cdots \subset A_1$ of subtori such that $\mathrm{codim}(A_k)=k$ for each $k$.
By Lemma~\ref{lem: basic property of abelian Neron Severi}(1), $\NS(A_k)_{\Q} = (\NS(A)_{\Q})|_{A_k}$ for all $k$, and $A_1$ is $f$-stable, $(f|_{A_1})^*|_{\NS(A_1)_{\Q}} =q \id$. Hence $A_2$ is $f$-stable. Inductively, $B$ is $f$-stable.

Clearly (2) implies (3).  We now assume (3) and prove (1). By our assumption, we have an isogeny $h: E_1^{n_1}\times\cdots\times E_k^{n_k} \to A$, where $n_i$ are positive integers such that $\sum_{i=1}^k n_i=n$, $E_i$ and $E_j$ are not isogenous for $i\neq j$, and $E_i^{n_i}$ represents the self-product $n_i$ times. We may write each elliptic curve $E_i$ as $\mathbb{C}/\Lambda_i$ and $A$ as $\mathbb{C}^n/\Lambda$. Note that $h$ is represented by a matrix multiplication $$h(z_1,\cdots,z_n)=(z_1,\cdots,z_n)\cdot M_h$$ between the corresponding universal covering spaces. Note that $$M_h\in \mathrm{End}(E_1^{n_1}\times\cdots\times E_k^{n_k})\otimes\mathbb{Q}\subseteq\mathrm{diag}(M_{n_1\times n_1}(\mathbb{C}),\cdots, M_{n_k\times n_k}(\mathbb{C})).$$ Therefore, by reorganizing the order of the coordinates for $A$, we may write $$M_h=\mathrm{diag}(M_{n_1},\cdots,M_{n_k})$$ for $M_{n_i}\in M_{n_i\times n_i}(\mathbb{C})$.

Now we return to the self-isogeny $f$ of $A$. We assume that $f$ is represented by a matrix $M_f\in M_{n\times n}(\mathbb{C})$. We consider the one-dimensional subtorus $$T_i:= (0,\cdots,0,z_i,0,\cdots,0)\cdot M_h$$ for each $1\leq i\leq n$, where $z_i$ denotes the $i$-th coordinate of $E_1^{n_1}\times\cdots\times E_k^{n_k}$. And we further consider the one-dimensional subtorus $$T_{i,j}= (0,\cdots,z_i,\cdots,z_j,\cdots,0)\cdot M_h$$ for each pair of indices $i,j$ with $z_i=z_j=:z$, where elliptic curves corresponding to the $i,j$ coordinates are isogenous. By our assumption, $T_i$ and $T_{i,j}$ are $f$-stable for all $i,j$. Since $T_i$ is $f$-stable, we have $$M_h\cdot M_{f}= \diag(\lambda_1',\dots,\lambda_n')\cdot M_h,$$ where $\lambda_i'$ are complex numbers such that $|\lambda_i'|^2=q$. Since $T_{i,j}$ are $f$-stable, 
\begin{align*}
    (0,\cdots,\lambda_i' z_i,\cdots,\lambda_j' z_j,\cdots,0)\cdot M_h &=f((0,\cdots,z_i,\cdots,z_j,\cdots,0)\cdot M_h) \\&=(0,\cdots,\lambda_{i,j}'z_i,\cdots,\lambda_{i,j}'z_j,\cdots,0)\cdot M_h
\end{align*}
for some complex number $\lambda_{i,j}'$. It follows that $\lambda_i'=\lambda_j'$. In particular, we have $$M_h\cdot M_{f} = \mathrm{diag}(\lambda_1 I_1,\cdots,\lambda_k I_k)\cdot M_h$$ for some complex numbers $\lambda_1,\dots,\lambda_k$ whose norms are $q$. Here $I_i$ denotes the $n_i\times n_i$ identity matrix for each $1\leq i \leq k$. It follows that $$M_f=M_h^{-1}\cdot \mathrm{diag}(\lambda_1 I_1,\cdots,\lambda_k I_k) \cdot M_h=\mathrm{diag}(\lambda_1 I_1,\cdots,\lambda_k I_k).$$ By \cite[Theorem~4.3]{PreS12}, $f^*|_{\mathrm{NS}_{\mathbb{Q}}}$ is a scalar multiplication map. This completes the proof of the equivalence of statements (1)-(3).

\medskip

Next, we prove the `Moreover' part. By Lemma~\ref{lem: uniform stable index}(1), by replacing $f$ with its iterate $f^s$ for some positive integer $s$, we may assume that all $T_i$ and $T_{i,j}$ defined in the previous argument are $f^s$-stable (see Lemma~\ref{lem: numerically parallel abelian varieties}). Now, the same proof implies that $(f^s)^*|_{\mathrm{NS}_{\mathbb{Q}}}$ is a scalar multiplication map. This completes the proof of the `Moreover' part.

\medskip

Now we assume the equivalent conditions (1)-(3). Since the multiplication-by-$\lambda_i$ defines a morphism of $E_i$, by Lemma~\ref{lem: complex structure of endomorphism ring of abelian variety}, $\lambda_i+\bar \lambda_i=l_i\in \mathbb{Z}$. In particular, the map $\bar{f}$ represented by the matrix $$\mathrm{diag}(\bar\lambda_1 I_1,\cdots,\bar\lambda_k I_k)=\mathrm{diag}( l_1 I_1,\cdots, l_k I_k)-\mathrm{diag}(\lambda_1 I_1,\cdots,\lambda_k I_k)$$ induces a self-isogeny of $A$ as it preserves the lattice of $A$. Now, for each element $x\in \Ker(f)$, $0 = \bar f\circ f(x)=q\cdot x$. Hence $\mathrm{exp}(\Ker(f))\mid q$. For the last part, we take a prime $p\mid q$. Then we may write $q=p^v \cdot u$, where $u,v$ are positive integers such that $\mathrm{gcd}(u,p)=1$. Let $B$ be a non-trivial subtorus of $A$ (which is $f$-stable), and let $r:=\rank\Ker_p(f|_B)$. Since $f|_B$ is $q$-polarized, $\deg(f|_B)=q^{\dim(B)}=|\Ker(f|_B)|$. Let $\Ker_p (f|_B)$ be the Sylow $p$-subgroup of $\Ker(f|_B)$. Then $\mathrm{exp}(\Ker_p(f|_B))\mid p^v$. We have $$p^{v\cdot \dim(B)}=|\Ker_p (f|_B)|\leq p^{v\cdot r}.$$ It follows that $r\geq \dim(B)$.
\end{proof}

As a corollary of the above lemma, we have the following, which includes Theorem~\ref{thm: essential dimension isogenous to product elliptic curves intro} as a special case.

\begin{corollary}\label{cor: essential dimension isogenous to product elliptic curves periodic}
    Let $A$ be an abelian variety that is isogenous to the product of elliptic curves. Let $f$ be a $q$-polarized endomorphism of $A$. Then the following statements hold.
    \begin{enumerate}
        \item If every one-dimensional subtorus in $A$ is numerically $f$-preperiodic, then $f^s$ is incompressible for some positive integer $s$.
        \item If every one-dimensional subtorus in $A$ is numerically $f$-stable, then we can choose the index $s$ in (1) so that it depends only on $\dim(A)$.
        \item Under the assumption in (2), for any prime integer $p\mid q$, we have $$\mathrm{ed}(f)\geq \frac{p-1}{p} \dim(A).$$
    \end{enumerate}

\end{corollary}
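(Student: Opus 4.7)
By Lemma~\ref{lem: facts of essential dimension}(3), replacing $f$ by the self-isogeny $f-f(0)$ does not change any $\mathrm{ed}(f^s)$, and under the hypothesis of (2) or (3) it upgrades ``numerically $f$-stable'' to ``$f$-stable'' on every subtorus (any translate of a subtorus meeting $0$ coincides with it, by Lemma~\ref{lem: numerically parallel abelian varieties}). For (1), the ``Moreover'' clause of Lemma~\ref{lem: basic property of polarized end of product abelian} furnishes an integer $s_1\ge 1$ such that every subtorus is $f^{s_1}$-stable, and Theorem~\ref{thm: main with all subtori f numerically periodic} applied to $f^{s_1}$ produces $s_2\ge 1$ with $(f^{s_1})^{s_2}=f^{s_1 s_2}$ incompressible. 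For (2), the equivalence (1)$\Leftrightarrow$(2)$\Leftrightarrow$(3) in Lemma~\ref{lem: basic property of polarized end of product abelian} gives simultaneously that every subtorus is $f$-stable and $\exp\Ker(f^k)\mid q^k$; Lemma~\ref{lem: main lemma with all subtori f stable} with $C=1$ then ensures $f^s$ is incompressible once $s>\log_q J(\dim A)$, so (since $q\ge 2$) the uniform index $s=\lceil\log_2 J(\dim A)\rceil+1$ depends only on $\dim A$.

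For (3), the plan is to repeat the base-change analysis inside the proof of Lemma~\ref{lem: main lemma with all subtori f stable} with $s=1$. This produces the Albanese fibration $g\colon Y\to\Alb(Y)$ with rationally connected general fibre $F$, the induced surjective morphism $h\colon A\to\Alb(Y)$ with kernel $B=\Ker(h)$, and a finite abelian group $G:=\Ker(f|_B)$ of order $q^{\dim B}$ acting faithfully and birationally on $F$, so that $\mathrm{ed}(f)=\dim A-(\dim B-\dim F)$. The refined input is the $p$-rank estimate $\rank_p G\ge\dim B$ furnished by the final sentence of Lemma~\ref{lem: basic property of polarized end of product abelian}, which produces an elementary abelian subgroup $(\mathbb{Z}/p)^{\dim B}\le G$. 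Combining Lemma~\ref{lem: uniform finite index embedding to general linear group} (after regularizing the birational action and invoking the fixed-point argument, a bounded-index subgroup of this elementary abelian group embeds into $\GL(\dim F,\mathbb{C})$) with the fact that the maximal elementary abelian $p$-rank of $\GL(d,\mathbb{C})$ equals $d$, together with the envelope $\dim B\le\dim A$, I extract the key estimate $\dim B-\dim F\le\dim(A)/p$. Substituting into $\mathrm{ed}(f)=\dim A-(\dim B-\dim F)$ yields $\mathrm{ed}(f)\ge\frac{p-1}{p}\dim A$.

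The main obstacle is extracting this sharp slack $\dim(A)/p$ in the last estimate: the Jordan property from Lemma~\ref{lem: uniform finite index embedding to general linear group} alone bounds $\dim B-\dim F$ only by $\log_p J(\dim F)$, which is generally much larger. The sharper bound requires carefully exploiting the joint constraints on exponent, rank, and order of $G$ (forced by $f$ being $q$-polarized with every subtorus $f$-stable, via Lemma~\ref{lem: basic property of polarized end of product abelian}) when $G$ is embedded up to finite index into $\GL(\dim F,\mathbb{C})$, together with the geometric envelope $\dim F\le\dim B\le\dim A$ coming from the base-change diagram.
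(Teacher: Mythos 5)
Parts (1) and (2) of your proposal match the paper's argument: translate to a self-isogeny, invoke Lemma~\ref{lem: basic property of polarized end of product abelian} to upgrade the hypothesis on one-dimensional subtori to $f^s$-stability (resp.\ $f$-stability) of all subtori, and then invoke Theorem~\ref{thm: main with all subtori f numerically periodic} (resp.\ Lemma~\ref{lem: main lemma with all subtori f stable} with $C=1$). The minor off-by-one slack in your uniform index $\lceil\log_2 J\rceil+1$ versus the paper's claimed $\lceil\log_2 J\rceil$ is immaterial.

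For (3) there is a genuine gap, and you correctly identify it yourself. Your plan re-runs the Albanese/base-change analysis of Lemma~\ref{lem: main lemma with all subtori f stable} and then tries to bound $\dim B-\dim F$ using the Jordan-type Lemma~\ref{lem: uniform finite index embedding to general linear group} together with the elementary abelian $p$-rank bound for $\GL(\dim F,\C)$. But as you observe, passing through a Jordan subgroup $H$ of index $\le J$ only intersects your elementary abelian $p$-group $(\Z/p)^{\dim B}\le G$ in a subgroup of rank at least $\dim B - \log_p J$, which then yields only $\dim B-\dim F\le\log_p J(\dim F)$. That bound is not $\dim(A)/p$ in general (and is not even bounded in terms of $\dim A$ alone for small $p$), so it does not deliver $\mathrm{ed}(f)\ge\frac{p-1}{p}\dim A$. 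Your sentence ``I extract the key estimate $\dim B-\dim F\le\dim(A)/p$'' is therefore unjustified, and the subsequent paragraph admits as much.

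The paper closes this gap by not re-deriving the fibration analysis at all: it quotes Koll\'ar--Zhuang's Theorem~2 from \cite{KZ}, which already packages the base-change and group-action estimates and says directly that
\[
\mathrm{ed}(f)\ \ge\ \dim(A)-\dim(B)+\tfrac{p-1}{p}\,\rank_p\bigl(\Ker(f)\cap B\bigr)
\]
for some subtorus $B\subset A$. Feeding in the rank estimate $\rank_p(\Ker(f)\cap B)=\rank_p\Ker(f|_B)\ge\dim(B)$ from the last part of Lemma~\ref{lem: basic property of polarized end of product abelian} gives $\mathrm{ed}(f)\ge\dim(A)-\tfrac1p\dim(B)\ge\tfrac{p-1}{p}\dim(A)$ immediately. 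The essential point is that Koll\'ar--Zhuang's theorem is a strictly sharper input than the bare Jordan property: it exploits the $p$-group action on the rationally connected fibre more precisely than ``finite index abelian subgroup embeds into $\GL$.'' Without citing that sharper tool, the $\frac{p-1}{p}$ constant is out of reach for the argument you sketch.
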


\begin{proof}
    As in Theorem~\ref{thm: main with all subtori f numerically periodic}, by replacing $f$ with its translation, we may assume that $f$ is a self-isogeny.
    
    For (1), by Lemma~\ref{lem: basic property of polarized end of product abelian}, there is a positive integer $s$ such that every subtorus $B$ of $A$ is $f^s$-stable. Now (1) follows from Theorem~\ref{thm: main with all subtori f numerically periodic}.

    For (2), by Lemma~\ref{lem: basic property of polarized end of product abelian}, every subtorus $B$ of $A$ is $f$-stable and the exponent of $\Ker f$ divides $q$. The assertion now follows from Lemma~\ref{lem: main lemma with all subtori f stable} by taking $C=1$.

    For (3), by Lemma~\ref{lem: basic property of polarized end of product abelian}, we have $\rank_p \Ker(f|_B)\geq \dim(B)$ for every subtorus $B\subset A$ of dimension $m\geq 1$ and any prime $p\mid \deg(f)$. By \cite[Theorem~2]{KZ}, we have $$\mathrm{ed}(f) \geq \dim(A) - \dim(B) + \frac{p-1}{p} \rank_p(\Ker (f) \cap B)\geq\dim(A)-\frac{1}{p}\dim(B)$$ for some subtorus $B$ of $A$ and all prime integers $p \, | \, q$. It follows that $$\mathrm{ed}(f) \geq \frac{p-1}{p}\dim(A).$$ This completes the proof.
\end{proof}
\section{Abelian surfaces}

\subsection{Counterexamples}

First, we introduce the following criterion for a self-isogeny of an abelian surface to be compressible.

\begin{proposition}\label{prop: essential dimension be one criterion}
    Let $f: A \to A$ be a polarized endomorphism of an abelian surface $A$. Suppose that there exists a sub-1-torus $E$ in $A$ such that the restriction $f|_E: E\to f(E)$ has degree one. Then $\mathrm{ed}(f)\leq 1$.
\end{proposition}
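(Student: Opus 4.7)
The plan is to construct a commutative square exhibiting $f$ as (birational to) a pull-back of a morphism between curves. Since $E$ is birational-invariant under translation of $f$ (Remark~1.2(4)), I may assume after translation that $f$ is a self-isogeny and that $E$ and $f(E)$ are subtori. By Lemma~\ref{lem: numerically f-periodic elliptic curve and quotient}(1)-(2), there are quotient maps $\tau_1 \colon A \to Y_1 = A/E$ and $\tau_2 \colon A \to Y_2 = A/f(E)$ onto elliptic curves, together with an induced surjective isogeny $g \colon Y_1 \to Y_2$ fitting in the commutative diagram
\[
\begin{tikzcd}
A \ar[r,"f"] \ar[d,"\tau_1"'] & A \ar[d,"\tau_2"] \\
Y_1 \ar[r,"g"'] & Y_2.
\end{tikzcd}
\]

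The key computation is the degree of $g$. By Lemma~\ref{lem: numerically f-periodic elliptic curve and quotient}(2) we have $\deg f|_E = (\deg f)/(\deg g)$, and since by hypothesis $\deg f|_E = 1$, I obtain $\deg g = \deg f$.

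Next I form the fibre product $P := Y_1 \times_{Y_2} A$ with projections $\pi_1 \colon P \to Y_1$ and $\pi_2 \colon P \to A$. Because $\tau_2$ is a smooth surjection with elliptic-curve fibres, the base change $\pi_1 \colon P \to Y_1$ is also smooth with elliptic-curve fibres, so $P$ is an irreducible smooth projective surface. The commutative diagram yields a canonical morphism
\[
\phi := (\tau_1, f) \colon A \longrightarrow P, \qquad \pi_1 \circ \phi = \tau_1, \quad \pi_2 \circ \phi = f.
\]
Since $\pi_2$ is the base change of $g$ along $\tau_2$, we have $\deg \pi_2 = \deg g = \deg f$. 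From $\deg f = \deg \pi_2 \cdot \deg \phi$ I conclude $\deg \phi = 1$, so $\phi$ is birational.

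Consequently $f \colon A \to A$ is birational to $\pi_2 \colon P \to A$, which by construction is the pull-back of $g \colon Y_1 \to Y_2$ along $\tau_2 \colon A \to Y_2$. As $\dim Y_1 = \dim Y_2 = 1$, this exhibits $\mathrm{ed}(f) \le 1$.

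The only real subtlety is step~3: checking that $P$ is irreducible (so that the single component realising the pull-back is actually $P$ itself) and that the naive degree count $\deg f = \deg \pi_2 \cdot \deg \phi$ is valid. Both follow from the smoothness of $\tau_2$ and the fact that base changes of finite flat morphisms preserve degree, so I do not expect serious obstacles; the entire argument is a clean application of Lemma~\ref{lem: numerically f-periodic elliptic curve and quotient} combined with the universal property of the fibre product.
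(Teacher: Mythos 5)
Your proof is correct and follows essentially the same route as the paper: reduce to the case where $E$ and $f(E)$ are subtori, invoke Lemma~\ref{lem: numerically f-periodic elliptic curve and quotient}(2) to build the commutative square and deduce $\deg g=\deg f$, form the fibre product over $Y_2$, and use the degree count to show $f$ is birational to the base change of $g$. The only cosmetic difference is that the paper works with the normalization of the fibre product and uses connectedness of $\tau_2$'s fibers to get irreducibility, whereas you observe directly that $P$ is already smooth and irreducible because $\tau_2$ is smooth with connected fibers; both are fine.
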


\begin{proof}
    By Lemma~\ref{lem: numerically f-periodic elliptic curve and quotient}(2), we have an induced surjective isogeny $g: Y_1:=A/E \to Y_2:=A/f(E)$ and the following commutative diagram,
$$\begin{tikzcd}
X \arrow[rrd, bend left, "h"] \arrow[rdd, bend right] & & \\
 & A \arrow[lu, dashed, "\sigma"] \arrow[r, "f"] \arrow[d, "\tau_1"'] & A  \arrow[d, "\tau_2"] \\
& Y_1 \arrow[r, "g"] & Y_2
\end{tikzcd}$$
where $\tau_1,\tau_2$ are the natural quotient morphisms. Moreover, we have $\deg(g)=\deg(f)$. Let $X$ be the normalization of the fibre product $A\times_{Y_2} Y_1$ with the projection map $h: X\to A$. Since $\tau_2: A \to Y_2$ has connected fiber, $X$ is irreducible. By the universal property of fibre products, $f$ factors as $h \circ \sigma: A \to X \to A$. 
Now $\deg(h)=\deg(g)=\deg(f)$. So $\sigma$ is birational and finite and hence an isomorphism.
Hence $f$ is a base change of $g$. In particular, $\mathrm{ed}(f)\leq 1$.
\end{proof}

As a corollary, we introduce a way to construct 
compressible endomorphisms.

\begin{lemma}\label{lem: essential dimension integral matrix and factorization}
    Let $E$ be an elliptic curve, $A=E\times E$, and $f_M$ a self-isogeny of $A$ defined by $$f_M: A\to A,\ \ \ f_M(x,y)=(ax+by,cx+dy)=(x,y)\cdot M,$$ where $a,b,c,d$ are integers that represent the multiplication-by-integer maps on $E$. Then the following statements hold:
    \begin{enumerate}
        \item Suppose that there exists an integer row vector $V$ such that the greatest common divisor of the entries of $V\cdot M$ is one. Then $\mathrm{ed}(f_M)<2$.
        \item $\mathrm{ed}(f_M)$ equals $2$ if and only if $f_M$ factors through a multiplication-by-$m$ map $m_A$ on $A$ for some integer $m>1$.
        \item We have $\mathrm{ed}(f_M)=\mathrm{ed}(f_{PMQ})$ for any $P,Q\in \mathrm{GL}(2,\mathbb{Z})$. In particular, $\mathrm{ed}(f_M)=1$ if and only if $M$ has Smith normal form $\mathrm{diag}(\mathrm{det}(M),1)$, or equivalently, the greatest common divisor of the entries of $M$ is one.
    \end{enumerate}
\end{lemma}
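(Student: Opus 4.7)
The overall strategy is to exploit the composition identity $f_M \circ f_{M'} = f_{M'M}$, which follows directly from the definition $f_N(x,y) = (x,y) \cdot N$, and to deduce the three statements in the order (3)$\to$(1)$\to$(2). The first part of (3) is immediate: $f_{PMQ} = f_Q \circ f_M \circ f_P$, and since $P, Q \in \mathrm{GL}(2, \mathbb{Z})$ forces $f_P, f_Q$ to be automorphisms of $A$, Lemma~\ref{lem: facts of essential dimension}(3) yields $\mathrm{ed}(f_{PMQ}) = \mathrm{ed}(f_M)$.

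For (1), given a row vector $V = (u, v) \in \mathbb{Z}^2$ with $V \cdot M$ having coprime entries (which forces $\gcd(u, v) = 1$, since any common factor of $V$ divides each entry of $V \cdot M$), I would define $\iota_V \colon E \to A$ by $t \mapsto (ut, vt)$. A B\'ezout argument shows $\iota_V$ is injective, so $E' := \iota_V(E)$ is a sub-1-torus of $A$. The identity $f_M \circ \iota_V = \iota_{VM}$, combined with the injectivity of $\iota_{VM}$ (from coprimality of $VM$), shows $f_M|_{E'} = \iota_{VM} \circ \iota_V^{-1}$ has degree one, whence Proposition~\ref{prop: essential dimension be one criterion} gives $\mathrm{ed}(f_M) \leq 1 < 2$.

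For the forward direction of (2), if $f_M = m_A \circ h$ with $m > 1$, Lemma~\ref{lem: facts of essential dimension}(3) combined with Koll\'ar--Zhuang's result $\mathrm{ed}(m_A) = \dim(A) = 2$ (\cite[Theorem~1]{KZ}) yields $\mathrm{ed}(f_M) \geq 2$, hence equals 2. Conversely, if $\gcd$ of entries of $M$ equals one, Smith normal form supplies $P, Q \in \mathrm{GL}(2, \mathbb{Z})$ with $P M Q = \diag(1, \det M)$, so the choice $V = (1, 0) \cdot P$ gives $V \cdot M = (1, 0) \cdot Q^{-1}$, which is a row of $Q^{-1} \in \mathrm{GL}(2, \mathbb{Z})$ and is hence coprime; (1) then gives $\mathrm{ed}(f_M) \leq 1$. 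The ``equivalently'' clause in (3) then follows by combining (2) with the observation that $f_{\diag(1, d)}(x, y) = (x, dy)$ is a base change of the multiplication-by-$d$ map on $E$ via the projection $A \to E$, $(x, y) \mapsto y$, giving $\mathrm{ed}(f_{\diag(1, d)}) \leq 1$.

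The main conceptual step is the construction in (1): matching a row vector $V$ with a sub-1-torus $\iota_V(E) \subset A$ so that the coprimality of $V \cdot M$ is precisely what makes $f_M|_{E'}$ an isomorphism and allows Proposition~\ref{prop: essential dimension be one criterion} to apply. A minor technicality is that this proposition is stated for polarized endomorphisms while a general $f_M$ need not be polarized; however, its proof only uses that $f_M$ is a finite surjective endomorphism, together with the quotient structure from Lemma~\ref{lem: numerically f-periodic elliptic curve and quotient}, both of which are available for any self-isogeny of an abelian surface.
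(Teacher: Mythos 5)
Your proposal is correct and follows essentially the same route as the paper: both realize a sub-$1$-torus of $A$ as the image of $E$ under a primitive integer row vector, observe that the restriction of $f_M$ to it has degree one precisely when the entries of $V\cdot M$ are coprime, invoke Proposition~\ref{prop: essential dimension be one criterion}, and use Smith normal form for the converse in (2). One small difference is cosmetic: you argue degree one via injectivity of both $\iota_V$ and $\iota_{VM}$, whereas the paper computes $\deg(f_M|_{\Delta_{p,q}}) = l^2$ with $l = \gcd$ of the entries of $(p,q)M$ and then sets $l=1$; these are the same calculation. One small point where you add something the paper elides: you explicitly note that Proposition~\ref{prop: essential dimension be one criterion} is stated for polarized endomorphisms yet its proof only requires $f$ to be a finite surjective endomorphism (via Lemma~\ref{lem: numerically f-periodic elliptic curve and quotient} and the fibre-product construction), so it applies to an arbitrary self-isogeny $f_M$; the paper applies the proposition without comment. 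That is a worthwhile observation, as Lemma~\ref{lem: essential dimension integral matrix and factorization} itself does not assume $f_M$ polarized.
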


\begin{proof}
    For (1), we define the following one-dimensional subtorus in $A$: $$\Delta_{p,q}=\{(px,qx)\mid x\in E,~ p,q\in \mathbb{Z},~ \mathrm{gcd}(p,q)=1\}.$$ A simple calculation shows that $f_M(\Delta_{p,q})=\Delta_{p',q'}$, where $$(p',q')=\frac{1}{l}(p,q)\cdot M =\frac{1}{l}(ap+bq,cp+dq)$$ and $l=\mathrm{gcd}(ap+bq,cp+dq)$. Moreover, $f_M$ restricts to an isogeny $f_M|_{\Delta_{p,q}}: \Delta_{p,q} \to \Delta_{p',q'}$ with degree $l^2$. Now, write $V = (p,q)$. Then the entries of $V$ are automatically coprime by our assumption. It follows that $l=1$ by our assumption. This implies that $\deg(f_M|_{\Delta_{p,q}})=1$, hence $\mathrm{ed}(f_M)<2$ by Proposition~\ref{prop: essential dimension be one criterion}.

    For (2), by Lemma~\ref{lem: facts of essential dimension}(3) and \cite[Theorem~1]{KZ}, if $f_M$ factors through a multiplication-by-$m$ map $m_A$ for some $m>1$, then $\mathrm{ed}(f_M)=2$. Now suppose that $\mathrm{ed}(f_M)=2$. We prove that $f_M$ factors through the multiplication-by-$m$ map for some integer $m>1$. For simplicity, we introduce a map $$\phi: \mathbb{Z}^2 \to \mathbb{Z}\ \ \ (v_1,v_2)\to \mathrm{gcd}(v_1,v_2).$$ By $(1)$, we conclude that $\phi(W\cdot M)>1$ for all integer row vectors $W$. Note that $\phi(W\cdot MM')=\phi(V\cdot M)$ for any invertible integer matrix $M'$. Now we consider the Smith normal form of $M$: $M=PNQ$ for some invertible integer matrices $P,Q$ and $N = \diag(d_1, d_2)$ such that $d_1,d_2\in \mathbb{Z}$ and $d_2\mid d_1$. We take $V'=(1,1)\cdot P^{-1}$. Then $$1 < \phi(V'\cdot M)=\phi((1,1)P^{-1}PNQ)=\phi((1,1)\mathrm{diag}(d_1,d_2))=\mathrm{gcd}(d_1,d_2)=d_2.$$
    Hence $1<d_2=\mathrm{gcd}(d_1,d_2)\mid \mathrm{gcd}(a,b,c,d)$. This implies that $f_M$ factors through the multiplication-by-$d_2$ map.

  (3) follows directly from Lemma~\ref{lem: facts of essential dimension}(2),
    noting that $f_P$ and $f_Q$ are automorphisms.
\end{proof}

The following example shows that Lemma~\ref{lem: essential dimension integral matrix and factorization}(2) does not hold in general even after iteration.

\begin{example}\label{ex: iteration does not factor through multiplication}
    We consider the abelian surface $A:=E\times E$, where $E:=\mathbb{C}/\mathbb{Z}[i]$. For each $\alpha\in \mathbb{Z}[i]$, we consider the polarized self-isogeny of $A$ defined by $$f_{\alpha}: (z_1,z_2) \mapsto (\alpha\cdot z_1,\alpha\cdot z_2).$$ Then $\deg(f_\alpha)=|\alpha|^4$. In particular, $f^*$ and $f_*$ act as $|\alpha|^2\cdot \mathrm{id}$ on the N\'eron-Severi group of $A$. Hence every $1$-subtorus of $A$ is stable under $f_\alpha$. We claim that $\mathrm{ed}(f_\alpha)=2$ whenever $|\alpha|^2>2$. Suppose to the contrary that $f_\alpha$ is birationally a base change of a morphism $h: Y\to Y'$ for some smooth curves $Y$ and $Y'$. By \cite[Corollary~7]{KZ}, the map $Y\to \mathrm{Alb}(Y)$ is surjective.  $G : = \Ker (f_\alpha)$ descends to a faithful action on $Y$ so that $Y' = Y/G$. We set $B:=\Ker (A\to \mathrm{Alb}(Y))$. 
    
    Suppose that $\dim(B)=2$, i.e., $B=A$, then $Y=Y'=\mathbf{P}^1$. Since $\Ker (f_\alpha)\subset \mathrm{Aut}(\mathbf{P}^1)=\mathrm{PGL}(2)$ and $|\Ker (f_\alpha)|= |\alpha|^4> 4$, $\Ker (f_\alpha)$ is cyclic, which is a contradiction. 

    Suppose that $\dim(B) = 1$. Since $\deg(f_\alpha|_B) = |\alpha| > 1$, we have $\rank_p\Ker (f_\alpha|_B)\geq 1$ for any prime $p \, | \, \deg(f_\alpha|_B)$. Applying \cite[Theorem~2]{KZ} (see also its proof) for $f_\alpha$ and $B$: $$\mathrm{ed}(f_\alpha)\geq \dim(A)-\dim(B)+\frac{p-1}{p}\dim(B)=\dim(A)-\frac{1}{p}\dim(B) \ge 1.5.$$
    This proves that $\mathrm{ed}(f_\alpha)=2$.

    \medskip

    Now take $\alpha=4+3i$. Then $f_\alpha$ is polarized and has essential dimension two. However, $f_\alpha^s$ never factors through any $m_A$ with $m \ge 2$ (see Remark~\ref{rem: simple lemma to check kernel is cyclic}(2)) for any positive integer $s>0$.
\end{example}

\begin{remark}
    We remark that if $f$ is a surjective self-isogeny of $A$, where $A=\mathbb{C}/\mathbb{Z}[i] \times \mathbb{C}/\mathbb{Z}[i]$, such that $\mathrm{ed}(f)=2$, then $f$ factors through the isogeny $\alpha\cdot \mathrm{id}$ for some $\alpha\in \mathbb{Z}[i]$ with $|\alpha|>1$. The proof goes exactly the same as in Lemma~\ref{lem: essential dimension integral matrix and factorization}, noticing that $\mathbb{Z}[i]$ is a principal ideal domain. 
\end{remark}

As a corollary of Lemma~\ref{lem: essential dimension integral matrix and factorization} or Proposition \ref{prop: essential dimension be one criterion}, we construct in Example \ref{ex: counterexample for stronger version of KZ question} below a polarized self-isogeny $f$ of an abelian surface $E\times E$ such that $\mathrm{ed}(f^s)<2$ for all positive integers $s$. {\it This gives a counterexample to Theorem~\ref{thm: ed main} if we do not require that all subtorus are $f$-periodic.}

\begin{example}\label{ex: counterexample for stronger version of KZ question} 
Now we construct polarized endomorphisms $f$ of $A=E\times E$ satisfying the property that 
$$\deg(f)\geq 2, \hskip 1pc \mathrm{ed}(f^s)<2$$
for all positive integers $s$ (this includes non-simple abelian surfaces with Picard number three or four). To do so, by Lemma~\ref{lem: essential dimension integral matrix and factorization}(1) (a consequence of Proposition \ref{prop: essential dimension be one criterion}), it suffices to find
\begin{enumerate}
    \item a $2\times 2$ diagonalizable integer matrix $M$ such that $|\mathrm{det}(M)|>1$ and the eigenvalues of $M$ have the same modulus, and
    \item an integer vector $V=(m_0,n_0)$ such that for all positive integers $s$, $V_s:=V\cdot M^s=(m_s,n_s)$ satisfies $\mathrm{gcd}(m_s,n_s)=1$.
\end{enumerate}
Indeed, we consider the self-isogeny $f=f_M$ of $A:=E\times E$ represented by multiplication by the matrix $M$ on the universal covering space of $A$. Then $\deg(f)=|\mathrm{det}(M)^2| >1$. By Lemma~\ref{lem: eigenvalues of polarized map}, Condition $(1)$ above implies that $f$ is numerically $q$-polarized. Hence $f^2$ is polarized. By Lemma~\ref{lem: essential dimension integral matrix and factorization}(1), Condition $(2)$ above guarantees that $\mathrm{ed}(f^s)\leq 1$ for all positive integers $s$. It follows that $f^2$ satisfies the desired property.

There are many matrices and vectors satisfying Conditions $(1)$ and $(2)$ above. For example, let $$M=\begin{pmatrix} 0 & 1 \\ -t & 1 \end{pmatrix}\ \ \ \text{\rm and}\ \ \ V=(1, 0)$$ for each integer $t>1$. We first check Condition $(1)$. Since the characteristic polynomial of $M$ is $x^2-x+t$, the two eigenvalues are conjugate to each other and hence have the same modulus. For Condition $(2)$, we set $V_s:=V\cdot M^s=(m_s,n_s)$ as before, and we inductively prove that $m_s+n_s\equiv 1\mod t$ and $\mathrm{gcd}(m_s,n_s)=1$. This holds for $s=0$.  Since $$m_{s+1}+n_{s+1}\equiv m_s+n_s\mod t\ \ \ and \ \ \ \mathrm{gcd}(m_{s+1},n_{s+1})=\mathrm{gcd}(m_s+n_s,-tn_s)=\mathrm{gcd}(m_s,n_s),$$ we obtain Condition $(2)$ by induction.
\end{example}

\begin{remark}\label{rem: rank is not ed}
    Example~\ref{ex: counterexample for stronger version of KZ question} also gives an example of a polarized isogeny such that $\rank \Ker(f)\geq \dim(A)$ (see Lemma~\ref{lem: real characteristic polynomial}) and $\mathrm{ed}(f)<\dim(A)$.
\end{remark}

The following lemma is motivated by a communication with S. Meng, and we state a generalized version here:

\begin{lemma}\label{lem: integer matrix coprime index}
    Let $R$ be a commutative ring and let $A\in M_{n\times n}(R)$ be an $n\times n$ matrix. Suppose that the ideal generated by all entries of $A^n$ is $R$. Then the ideal generated by all entries of $A^{k}$ is $R$ for all $k\in \mathbb{Z}_{>0}$.
\end{lemma}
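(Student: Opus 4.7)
Let $I_k \subseteq R$ denote the ideal generated by the entries of $A^k$. The statement is that $I_n = R$ implies $I_k = R$ for every positive integer $k$. My plan is to argue by contradiction and localize the problem at a maximal ideal, reducing it to a standard fact about nilpotent matrices over a field.

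Suppose, for contradiction, that $I_k \neq R$ for some $k \geq 1$. Then $I_k$ is contained in some maximal ideal $\mathfrak{m} \subseteq R$. Reducing modulo $\mathfrak{m}$, the matrix $\bar{A} \in M_{n \times n}(\kappa)$ over the residue field $\kappa = R/\mathfrak{m}$ satisfies $\bar{A}^k = 0$, since matrix multiplication commutes with the quotient map $R \to \kappa$ and every entry of $A^k$ lies in $\mathfrak{m}$. Hence $\bar{A}$ is a nilpotent $n \times n$ matrix over the field $\kappa$.

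The key step is the well-known fact that any nilpotent $n \times n$ matrix over a field satisfies $\bar{A}^n = 0$: since all eigenvalues of a nilpotent matrix vanish, its characteristic polynomial is $x^n$, and Cayley-Hamilton yields $\bar{A}^n = 0$ (alternatively, this follows from the Jordan canonical form). Consequently every entry of $A^n$ lies in $\mathfrak{m}$, so $I_n \subseteq \mathfrak{m} \neq R$, contradicting the hypothesis $I_n = R$.

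I do not anticipate a serious obstacle: the argument is essentially a one-line reduction to linear algebra over a field once one recognizes that the condition ``the entries of $A^k$ generate the unit ideal'' is equivalent to ``$\bar{A}^k \neq 0$ modulo every maximal ideal''. A more direct approach applying Cayley-Hamilton inside $R$ is also conceivable, but routing through residue fields is cleaner because the nilpotency index of an $n \times n$ matrix over a field is automatically at most $n$.
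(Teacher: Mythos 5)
Your proposal is correct and follows essentially the same route as the paper: localize at a maximal ideal containing $I_k$ and deduce that the reduced matrix $\bar A$ over the residue field $\kappa$ is nilpotent, hence $\bar A^n = 0$, contradicting $I_n = R$. The only cosmetic differences are that the paper first disposes of $k \le n$ via the inclusion $I_n \subseteq I_k$ and only localizes for $k > n$, whereas you treat all $k$ uniformly, and the paper justifies $\bar A^n = 0$ by the stabilization of the chain of kernels $\Ker(\bar A^j)$ rather than by quoting Cayley--Hamilton for a nilpotent matrix; these are interchangeable standard facts.
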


\begin{proof}
    We may assume that $R\neq 0$. Otherwise the statement is trivial.
    
    Let $I_k$ be the ideal generated by all entries of $A^{k}$ for each $k$. It follows that $I_k\subset I_{k'}$ for each $k'\leq k$. Hence $I_n=R$ implies that $I_k=R$ for all $k\leq n$. Now we prove that $I_k=R$ for $k>n$. Suppose to the contrary that $I_k\subsetneq R$ for some $k>n$. Then we may take a maximal ideal $\mathfrak{m}_k\subsetneq R$ containing $I_k$, with the residue field $\kappa=R/\mathfrak{m}_k$. We let $A_\kappa$ be the reduction of $A$ modulo $\mathfrak{m}_k$. Then $A_\kappa\in M_{n\times n}(\kappa)$. Moreover, $A_\kappa^k=0$ over $\kappa$. Note that $\Ker (A_\kappa^k)=\Ker (A_\kappa^n)$ when $k\geq n$, hence $A_\kappa^n=0$. This implies that $I_n\subset \mathfrak{m}_k$, a contradiction.
\end{proof}

\begin{remark}\label{rem: simple lemma to check kernel is cyclic}
    There are several interesting applications of Lemma~\ref{lem: integer matrix coprime index}.
    \begin{enumerate}
        \item Consider the elliptic curve $E:=\C/\Lambda$ where $\Lambda=\Z\oplus \Z\tau$ for some quadratic integer $\tau\in \C\setminus \R$. We consider the isogeny $f_c$ of $E$ defined by $z\to c\cdot z$ for some $c\in \mathrm{End}(E)$. Then there exists an integer matrix $M_c$ such that $c\cdot (1,\tau)=(1,\tau)\cdot M_c$. It is easy to see that $\deg(f_c^s)=|\det(M_c)|^s$ for all positive integers $s$, and $\Ker(f^s)$ is a cyclic group if and only if the greatest common divisor of the entries of $M_c^s$ is $1$. By applying Lemma~\ref{lem: integer matrix coprime index} to $M_c$ with $R=\Z$, we conclude that $\rank \Ker(f^s)=1$ for all positive integers $s$ if and only if $\rank \Ker(f^2)=1$.
        \item For $p+qi\in \Z[i]$, set $p_s+q_si=(p+qi)^s$. Then $\gcd(p_s,q_s)=1$ for all positive integers $s$ if and only if $\gcd(p_2,q_2)=1$.
    \end{enumerate}
\end{remark}

As a corollary of Lemma~\ref{lem: integer matrix coprime index}, we have:

\begin{lemma}\label{lem: essential dimension index}
    Let $E$ be an elliptic curve and $A=E\times E$. Let $f$ be a self-isogeny of $A$ defined by $$f: A\to A,\ \ \ f(x,y)=(x,y)\cdot M,$$ for some integer matrix $M$ and some coordinates $(x,y)$ on the universal cover of $A$. Suppose that $\mathrm{ed}(f^s)=2$ for some integer $s>0$. Then $\mathrm{ed}(f^t)=2$ for all $t \ge 2$.
\end{lemma}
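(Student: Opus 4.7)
The plan is to combine the arithmetic characterization of $\mathrm{ed}(f_M)$ from Lemma~\ref{lem: essential dimension integral matrix and factorization} with the ideal-stabilization result Lemma~\ref{lem: integer matrix coprime index}, reducing everything to the single case $t=2$.

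First I would translate the hypothesis into a statement about $M$. Since $f^s = f_{M^s}$ and $\mathrm{ed}(f^s) = 2 = \dim(A)$, Lemma~\ref{lem: essential dimension integral matrix and factorization}(2) says that $f^s$ factors through $m_A$ for some integer $m>1$; equivalently, by Lemma~\ref{lem: essential dimension integral matrix and factorization}(3), the greatest common divisor of the entries of $M^s$ is strictly greater than one.

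Next I would apply Lemma~\ref{lem: integer matrix coprime index} with $R = \mathbb{Z}$ and $n = 2$ to the matrix $A = M$. Its contrapositive asserts: if the gcd of the entries of $M^k$ exceeds one for some $k \ge 1$, then the gcd of the entries of $M^2$ also exceeds one. Taking $k = s$, we obtain that $M^2$ admits a nontrivial common divisor $m > 1$ among its entries, so by Lemma~\ref{lem: essential dimension integral matrix and factorization}(2), $f^2 = f_{M^2}$ factors through a multiplication-by-$m$ map $m_A$. Then for any $t \ge 2$ the decomposition $f^t = f^{t-2} \circ f^2$ shows that $f^t$ factors through $m_A$ as well, and combining Lemma~\ref{lem: facts of essential dimension}(3) with the Koll\'ar--Zhuang theorem \cite[Theorem~1]{KZ} yields
\[
2 \;=\; \mathrm{ed}(m_A) \;\le\; \mathrm{ed}(f^t) \;\le\; \dim(A) \;=\; 2,
\]
hence $\mathrm{ed}(f^t) = 2$.

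The argument is essentially a bookkeeping exercise once Lemmas~\ref{lem: essential dimension integral matrix and factorization} and \ref{lem: integer matrix coprime index} are available, so I do not anticipate any substantial obstacle; the genuine content is already carried by Lemma~\ref{lem: integer matrix coprime index}, which converts the a priori infinite family of coprimality conditions on $M^k$ into the single test at $k=2$.
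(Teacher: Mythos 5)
Your argument is correct and matches the paper's intended proof, which is stated in one line as a direct consequence of Lemma~\ref{lem: essential dimension integral matrix and factorization}(2), Lemma~\ref{lem: integer matrix coprime index} with $R=\Z$, and Lemma~\ref{lem: facts of essential dimension}; you have simply fleshed out the same chain (translate $\mathrm{ed}(f^s)=2$ into $\gcd$ of the entries of $M^s$ being $>1$, apply the contrapositive of Lemma~\ref{lem: integer matrix coprime index} to conclude $\gcd$ of the entries of $M^2$ is $>1$, then propagate to all $t\ge 2$ and invoke \cite[Theorem~1]{KZ}). One small inaccuracy of citation: the equivalence between ``$f_{M^s}$ factors through $m_A$ with $m>1$'' and ``$\gcd$ of the entries of $M^s$ is $>1$'' is elementary rather than a consequence of Lemma~\ref{lem: essential dimension integral matrix and factorization}(3), but this does not affect the validity of the argument.
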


\begin{proof}
    This follows directly from Lemma~\ref{lem: essential dimension integral matrix and factorization}(2), Lemma~\ref{lem: integer matrix coprime index} (with $R:=\Z$) and Lemma \ref{lem: facts of essential dimension}.
\end{proof}

\begin{remark}
 Suppose that $f$ is represented by the matrix $$\begin{pmatrix} 1 & 1 \\ 1 & -1 \end{pmatrix}$$ in Lemma~\ref{lem: essential dimension index}. Then $\mathrm{ed}(f)=1$ by Lemma~\ref{lem: essential dimension integral matrix and factorization}(2), while $f^2=2I$, hence $\mathrm{ed}(f^2)=2$. This shows that the power index two is optimal in Lemma~\ref{lem: essential dimension index}.
\end{remark}

The following lemma provides many examples of incompressible self-isogenies that do not factor through a scalar map.

\begin{lemma}\label{lem: essential dimension isogenous product integer matrix}
    Let $A=E_1\times E_2$, where $E_1$ and $E_2$ are two non-isogenous elliptic curves. Let $f$ be a self-isogeny of $A$ and hence $f$ splits as $f_1\times f_2$, where $f_i$ is a self-isogeny of $E_i$. Suppose further that $f_i\in \mathrm{End}(E_i)\cap \mathbb{R}$ for each $i\in \{1,2\}$. Then $\mathrm{ed}(f)=2$ if and only if $\mathrm{min}\{\deg (f_1),\deg(f_2)\}\geq 2$.
\end{lemma}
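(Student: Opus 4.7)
The plan is to reduce the statement to an integer calculation and then apply the tools already developed in the paper: Proposition \ref{prop: essential dimension be one criterion} for the ``only if'' direction and the Kollár--Zhuang lower bound \cite[Theorem~2]{KZ} (as already invoked in the proof of Corollary \ref{cor: essential dimension isogenous to product elliptic curves periodic}) for the ``if'' direction.

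First I would unpack the hypothesis $f_i \in \mathrm{End}(E_i) \cap \mathbb{R}$. Since $\mathrm{End}(E_i)$ is either $\mathbb{Z}$ or an order in an imaginary quadratic field, we have $\mathrm{End}(E_i) \cap \mathbb{R} = \mathbb{Z}$, so $f_i = [n_i]$ is the multiplication-by-$n_i$ map for some nonzero integer $n_i$, and $\deg(f_i) = n_i^2$. In particular $\min\{\deg(f_1),\deg(f_2)\} \geq 2$ is equivalent to $|n_1|,|n_2| \geq 2$.

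For the forward direction I would argue by contrapositive. Assume without loss of generality that $|n_1| = 1$, so that $f_1$ is an automorphism of $E_1$. Consider the subtorus $E := E_1 \times \{0\} \subset A$; then $f|_E = f_1 \times 0$ is an isomorphism onto $f(E) = E_1 \times \{0\}$, so $\deg(f|_E)=1$. Proposition \ref{prop: essential dimension be one criterion} then gives $\mathrm{ed}(f) \leq 1 < 2$.

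For the backward direction, assume $|n_1|, |n_2| \geq 2$, pick any prime $p$ dividing $n_1$, and set $B := E_1 \times \{0\}$, a subtorus of dimension $1$. Since $f|_B = [n_1]_{E_1}$ is an isogeny whose kernel is $(\mathbb{Z}/n_1\mathbb{Z})^2$, we have $\rank_p \Ker(f|_B) = 2$. Applying \cite[Theorem~2]{KZ} exactly as in the proof of Corollary \ref{cor: essential dimension isogenous to product elliptic curves periodic}, we get
$$\mathrm{ed}(f) \;\geq\; \dim(A) - \dim(B) + \frac{p-1}{p}\,\rank_p \Ker(f|_B) \;=\; 1 + \frac{2(p-1)}{p} \;\geq\; 2.$$
Since $\mathrm{ed}(f) \leq \dim(A) = 2$, equality holds. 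All ingredients are already available in the paper, so I expect no serious obstacle; the only point that needs care is verifying that Kollár--Zhuang's bound is applied to the correct subtorus $B$, and that $f|_B = [n_1]$ has the advertised kernel structure, both of which are immediate from the product decomposition $f = f_1 \times f_2$ forced by $\mathrm{Hom}(E_1,E_2)=\mathrm{Hom}(E_2,E_1)=0$.
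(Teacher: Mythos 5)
Your forward direction is correct and matches the paper's (terse) proof: if some $\deg(f_i)=1$, Proposition~\ref{prop: essential dimension be one criterion} applied to $E_i\times\{0\}$ (or $\{0\}\times E_i$) gives $\mathrm{ed}(f)\le 1$.

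The backward direction, however, has a genuine gap: you are misapplying \cite[Theorem~2]{KZ}. That theorem asserts the inequality
$$\mathrm{ed}(f)\geq \dim(A)-\dim(B)+\tfrac{p-1}{p}\rank_p(\Ker(f)\cap B)$$
for \emph{some} subtorus $B$ (provided by the theorem, depending on $f$) and for all primes $p$; you do not get to choose $B$. You set $B=E_1\times\{0\}$ yourself, but the theorem might instead hand you $B=\{0\}$, $B=\{0\}\times E_2$, or $B=A$. Since $E_1,E_2$ are non-isogenous, these four are exactly the possibilities, and one must check all of them. The first three are fine (your arithmetic works for them, mutatis mutandis), but $B=A$ is genuinely delicate: there $\dim(A)-\dim(B)=0$ and $\rank_p(\Ker(f))=\rank_p\bigl((\Z/n_1)^2\times(\Z/n_2)^2\bigr)$, so if $\gcd(n_1,n_2)=1$ no single prime gives $p$-rank $4$, and the bound becomes $\frac{p-1}{p}\cdot 2$, which is never $\geq 2$. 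The paper resolves this by splitting into cases: when $\gcd(n_1,n_2)>1$, $f$ factors through a multiplication-by-$m$ map and \cite[Theorem~1]{KZ} applies directly; when $\gcd(n_1,n_2)=1$ and both $|n_i|\ge 2$, one shows there exists an \emph{odd} prime $p$ dividing $n_1n_2$, giving $\frac{p-1}{p}\cdot 2>1$, whence $\mathrm{ed}(f)\ge 2$ by integrality. Your writeup omits both the $\gcd$-dichotomy and the exhaustive case analysis on $B$, so as stated it does not establish the lower bound.

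A minor remark: your reduction of $f_i\in\mathrm{End}(E_i)\cap\R$ to $f_i=[n_i]$ is fine, and is the same observation the paper makes (with the non-CM vs. CM distinction).
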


\begin{proof}
Suppose that $\mathrm{ed}(f)=2$. Then it follows directly that $\mathrm{min}\{\deg (f_1),\deg(f_2)\}\geq 2$. Now suppose that $\mathrm{min}\{\deg (f_1),\deg(f_2)\}\geq 2$. We prove that $\mathrm{ed}(f)=2$. By our assumption, $f$ splits as $m_1\times m_2$, where $(f_i =$) $m_i$ denotes the multiplication-by-$m_i$ map on $E_i$ for each $i$. Here the $m_j$ are integers. Indeed, it is clear if $E_j$ is non-CM type. While if $E_j$ is CM type, its period is complex and the assumption on 
$f_j$ (preserving the lattice) implies that $f_j = m_j$ is an integer.

If $\mathrm{gcd}(m_1,m_2)>1$, then $f$ factors through a multiplication-by-$m$ map for some $m>1$. Hence $\mathrm{ed}(f)=2$ by \cite[Theorem~1]{KZ} and Lemma~\ref{lem: facts of essential dimension}(3). From now on, we assume that $\mathrm{gcd}(m_1,m_2)=1$.

By \cite[Theorem~2]{KZ}, for some subtorus $B$ of $A$ and any prime number $p$, we have 
\begin{equation}\label{eq: Kollar Zhuang inequality}
    \mathrm{ed}(f) \geq \dim(A) - \dim(B) + \frac{p-1}{p} \rank_p(\Ker (f) \cap B)
\end{equation} 
where $\rank_p G$ denotes the $p$-rank of an abelian group $G$. By our assumption on $A$, there are only four possibilities for $B$, namely $B\in \{\{0\}, A,E_1\times\{0\}, \{0\}\times E_2 \}$.

If $B=\{0\}$, then by \eqref{eq: Kollar Zhuang inequality}, $\mathrm{ed}(f)\geq \dim(A)$, hence $\mathrm{ed}(f)=2$ in this case.

If $B=E_1\times\{0\}$ or $\{0\}\times E_2$, then the restriction $f|_B=m_i$ on $E_i$ for $i\in \{1,2\}$. In particular, $\Ker (f) \cap B\cong (\mathbb{Z}/m_i\mathbb{Z})^2$. We may choose $p$ so that $p\mid m_i$. Then \eqref{eq: Kollar Zhuang inequality} implies that $\mathrm{ed}(f)\geq 2-1+\frac{p-1}{p}\times 2$. Hence $\mathrm{ed}(f)=2$.

If $B=A$, then $\Ker (f) \cap B\cong (\mathbb{Z}/m_1\mathbb{Z})^2\times (\mathbb{Z}/m_2\mathbb{Z})^2$. 
Since $\mathrm{gcd}(m_1,m_2)=1$ and $m_j \ge 2$, there exists a prime $p>2$ such that $p\mid m_1m_2$. Then $\rank_p(\Ker (f) \cap B)= 2$, and $\mathrm{ed}(f)\geq 2\frac{p-1}{p}>1$. Hence $\mathrm{ed}(f)=2$.
\end{proof}

Let $f: X \to X$ be a surjective endomorphism of a projective variety $X$. Recall that $f$ is \emph{int-amplified} if $f^*L-L=H$ for some ample Cartier divisors $L$ and $H$. The following example shows that Theorem~\ref{thm: ed main} does not hold if we only require that $f$ is int-amplified.

\begin{example}\label{ex: cyclic after iteration example int-amplified}
Let $E_i=\C/\Lambda_i$, where $\Lambda_1=Z[i]$ and $\Lambda_2=Z[\sqrt{-2}]$. We consider the self-isogenies $h_1: z_1\to (1+2i)z_1$ and $h_2: z_2\to (1+\sqrt{-2})z_2$ defined on $E_1$ and $E_2$ respectively. Note that $\deg(h_1)=5$ and $\deg(h_2)=3$. Note also that $h_1^s$ and $h_2^s$ has cyclic kernels $\Z/5^s\Z$ and $\Z/3^s\Z$ respectively (see Remark~\ref{rem: simple lemma to check kernel is cyclic}(1)). Now, let $h=h_1\times h_2$. Then $h$ is an int-amplified self-isogeny of $E_1\times E_2$ such that every one-dimensional subtorus, which are just $E_1 \times \{0\}$ and $\{0\} \times E_2$, is $h$-stable. However, $\Ker(h^s)=\Z/15^{s}\Z$ is cyclic for any $s \ge 1$, hence $\mathrm{ed}(h^s)=1$ for all positive integers $s$ by Lemma~\ref{lem: facts of essential dimension}(4).
\end{example}

We end this section with a counterexample for Question~\ref{ques: polarized incompressible} when the base variety is rationally connected.

\begin{example}\label{ex: counterexample rationally connected}
    Consider the endomorphism $f$ on $\mathbf{P}^1\times \mathbf{P}^1$ given by $(x,y) \mapsto (y,x^4)$. Then $f$ is a $\Z/4\Z$ cover. Moreover, $f^2: (x,y) \mapsto (x^4,y^4)$ is $4$-polarized, and $f^2$ is a $\Z/4\Z\times \Z/4\Z$ cover. By \cite[Proof of Theorem 2.7, Note ]{Zh10}, $f$ is $2$-polarized. By Remark~\ref{rem: kernel and incompressible}(1), $\mathrm{ed}(f)\leq 1$. Note that if $\mathrm{ed}(f^2)<2$, then $f^2$ is a birational base change of a finite morphism between $\mathbf{P}^1$. In particular, $\Z/4\Z\times \Z/4\Z$ acts faithfully on $\mathbf{P}^1$. This leads to a contradiction as $\mathrm{PGL}(2)$ does not have a subgroup that is isomorphic to $\Z/4\Z\times \Z/4\Z$. Hence $\mathrm{ed}(f^2)=2$.
\end{example}

\subsection{Proof of {Theorem~\ref{thm: rank main uniform}}}
The goal of this section is to prove Theorem~\ref{thm: rank main uniform}. It suffices to work over $\C$ by the Lefschetz principle.

We start with the following lemma which plays a key role in this section.

\begin{lemma}\label{lem: real characteristic polynomial}
    Let $A$ be an abelian variety of dimension $n$, and let $f: A\to A$ be a self-isogeny defined by $$f: (z_1,\cdots,z_n)^t \mapsto f(z_1,\cdots,z_n):=M\cdot (z_1, \cdots, z_n)^t$$
    for some $M \in M_{n\times n}(\C)$, where $(z_1, \cdots,z_n)$ are the coordinates on the universal cover $\C^n$ of $A$.
    Suppose that one of the following conditions holds:
    \begin{enumerate}
        \item The characteristic polynomial $p_M(x)$ of $M$ has coefficients in $\R$.
        \item $M\in M_{n\times n}(\R)$.
    \end{enumerate}
    Then $p_M(x)$ is in $\Z[x]$ (hence $\det(M)\in \Z$),  and $\mathrm{exp}(\Ker(f))\mid \det(M)$. Moreover, for any prime $p\mid \det(M)$, $\rank_p\Ker(f)\geq 2.$
\end{lemma}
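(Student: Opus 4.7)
The plan is to reduce the integrality claim to the $2n\times 2n$ integer matrix representing $M$ on the lattice of $A$, and then extract both the exponent and the $p$-rank bound via Cayley--Hamilton and order counting. Write $A=\mathbb{C}^n/\Lambda$ and fix a $\Z$-basis $v_1,\ldots,v_{2n}$ of $\Lambda$. Since $f$ is a self-isogeny, $M\Lambda\subset\Lambda$, so there exists $L\in M_{2n\times 2n}(\Z)$ with $M\cdot(v_1,\ldots,v_{2n})=(v_1,\ldots,v_{2n})\cdot L$. Viewing $\mathbb{C}^n$ as $\mathbb{R}^{2n}$, the real matrix $M_{\mathbb{R}}$ of $M$ is $\mathbb{R}$-conjugate to $L$. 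Complexifying, $M_{\mathbb{R}}\otimes 1$ acts on $\mathbb{R}^{2n}\otimes_{\mathbb{R}}\mathbb{C}$ as $M\oplus\bar M$ under the splitting into the $\pm i$ eigenspaces of the ambient complex structure, hence
\begin{equation*}
p_L(x)=p_{M_{\mathbb{R}}}(x)=p_M(x)\cdot\overline{p_M}(x),
\end{equation*}
where $\overline{p_M}$ denotes coefficient-wise conjugation. Under either hypothesis $p_M\in\mathbb{R}[x]$, so $p_L=p_M^2\in\Z[x]$.

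Next I would upgrade $p_M$ to lie in $\Z[x]$. First, $p_M\in\mathbb{Q}[x]$: factor $p_L=\prod q_i^{m_i}$ into distinct monic irreducibles in $\mathbb{Q}[x]$; each $q_i$ is separable (characteristic zero), so the roots of $q_i$ occur with multiplicity exactly $m_i$ in $p_L$, and since $p_L=p_M^2$ all multiplicities are even, forcing each $m_i$ to be even. Hence $p_L=g^2$ with $g\in\mathbb{Q}[x]$ monic, and uniqueness of monic square roots in $\mathbb{R}[x]$ gives $p_M=g\in\mathbb{Q}[x]$. Second, the roots of $p_M$ are roots of the monic integer polynomial $p_L$, hence algebraic integers, so their elementary symmetric functions---the coefficients of $p_M$---lie in $\mathbb{Q}\cap\overline{\Z}=\Z$. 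Thus $p_M\in\Z[x]$ and in particular $\det(M)\in\Z$.

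Finally, Cayley--Hamilton applied to $p_M$ yields an integer polynomial expression $N\in\Z[M]$ with $MN=\pm\det(M)\cdot I$. Since $N$ is an integer polynomial in $M$ and $M$ preserves $\Lambda$, so does $N$, and $N$ therefore descends to an endomorphism $g$ of $A$ satisfying $f\circ g=\pm\det(M)\cdot\mathrm{id}_A$. This forces $\Ker(f)\subset A[\det(M)]$, so $\mathrm{exp}(\Ker(f))\mid\det(M)$. For the last assertion, $|\Ker(f)|=\deg(f)=|\det(L)|=|\det(M)|^2$, and for a prime $p\mid\det(M)$ with $v=v_p(\det(M))$, the Sylow $p$-subgroup $\Ker_p(f)$ has order $p^{2v}$ with exponent dividing $p^v$; the structure theorem for finite abelian groups then forces $\rank_p\Ker(f)\geq 2$. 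The only subtle point in the whole argument is the upgrade from $p_M\in\mathbb{R}[x]$ to $p_M\in\mathbb{Q}[x]$, for which the factorization-into-irreducibles step is the essential ingredient.
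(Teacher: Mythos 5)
Your proof is correct, and the overall skeleton matches the paper's: pass to the integral matrix $L$ on the lattice, identify $p_L = p_M\,\overline{p_M}$, use the reality hypothesis to get $p_L = p_M^2 \in \Z[x]$, pull $p_M$ down to $\Z[x]$, then apply Cayley--Hamilton for the exponent bound and a group-order count for the $p$-rank. The one place you take a genuinely different path is the step from $p_M^2 \in \Z[x]$ to $p_M \in \Z[x]$. The paper first shows $p_M \in \Q[x]$ by a descending induction on the coefficients of $p_M$ (solving for each coefficient from the known integrality of coefficients of $p_M^2$), and then upgrades to $\Z[x]$ via Gauss's lemma applied to a primitive rational multiple of $p_M$. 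You instead argue over $\Q$ by unique factorization: every irreducible factor of $p_L$ is separable, so the equality $p_L = p_M^2$ forces all multiplicities to be even, giving a monic $\Q$-square root $g$ with $g = p_M$; integrality then comes from observing the roots of $p_M$ are algebraic integers (being roots of the monic $p_L \in \Z[x]$) and symmetrizing. Both routes are clean; yours avoids the coefficient-by-coefficient bookkeeping and the Gauss's-lemma step at the cost of invoking UFD/separability and $\Q \cap \overline{\Z} = \Z$. You are also slightly more careful than the paper's exposition on the complexification step, correctly noting that the real representation is conjugate to $M \oplus \overline{M}$ (not literally $\mathrm{diag}(M,M)$), and that it is precisely hypothesis (1) that lets one replace $\overline{p_M}$ by $p_M$.

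One small wording point: after obtaining $MN = \pm\det(M)\cdot I$ with $N \in \Z[M]$, it is tidier to note that $N$ and $M$ commute, so $g\circ f = \pm\det(M)\cdot\mathrm{id}_A$; applying this to $x \in \Ker(f)$ gives $\det(M)\cdot x = 0$ directly. As written (``$f\circ g$'') the logical arrow is not immediate without using commutativity. The paper sidesteps this by evaluating $p_M(f)$ directly on $\Ker(f)$, where all terms $f^k(x)$ with $k\ge 1$ vanish and only the constant term $(-1)^n\det(M)\cdot x$ survives.
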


\begin{proof}
    Obviously, Condition (2) implies Condition (1). Thus, we may assume Condition (1) holds.

    Write $A=\mathbb{C}^n/\Lambda$, where $\Lambda$ is a lattice generated by $2n$ complex (column) vectors $v_1,\cdots,v_{2n}$. Since $f$ is a self-isogeny of $A$, we have $f(\Lambda)\subset \Lambda$, and hence 
    \begin{align}\label{eq: integer relation representation}
        M\cdot (v_1,\cdots,v_{2n})= (v_1,\cdots,v_{2n})\cdot L
    \end{align}
    for some $2n\times 2n$ integer matrix $L$. Note that the matrix of $f$ with respect to the standard $\mathbb{R}$-basis of $\mathbb{C}^n$ given by $$\{e_k,ie_k\mid e_k \text{ is the standard Euclidean basis vector whose $k$-th coordinate is 1}\}_{1\leq k\leq n}$$ is $\mathrm{diag}(M,M)$. Let $p_M(x)=\det(xI_n-M)\in \R[x]$ be the characteristic polynomial of $M$. Then \eqref{eq: integer relation representation} implies that 
    \begin{align}\label{eq: equation on characteristic polynomial}
        p_M(x)^2=\det(xI_{2n}-\diag(M,M))=\det(xI_{2n}-L)\in \Z[x].
    \end{align}
    By descending induction on the degree of each monomial term of $p_M(x)$, we conclude from \eqref{eq: equation on characteristic polynomial} that $p_M(x)\in \Q[x]$. Let $m$ be a rational number such that $mp_M(x)$ is a primitive element in $\Z[x]$. By Gauss's lemma, $(mp_M(x))^2$ is primitive. However, $p_M(x)^2=\det(xI_{2n}-L)$ is also primitive. This implies $m=1$, and therefore $p_M(x)\in \Z[x]$. In particular, $\det(M)$ is an integer. Note that $|\Ker(f)|=\deg(f)=|\det(M)|^2$.
    
    By the Cayley–Hamilton theorem, the map $p_M(f)$ is the zero map as well as a self-isogeny on $A$. This implies that for any $x\in \Ker(f)$, $$0=p_M(f)(x)=(-1)^n \det(M)\cdot x.$$ It follows that $\mathrm{exp}(\Ker(f))\mid \det(M)$. In particular, we have $\rank \Ker(f)\geq 2$. For a prime $p\mid \det(M)$, we may write $\det(M)=up^v$ for some $u,v\in \Z$ such that $\gcd(u,p)=1$. Let $\Ker_p(f)$ be the Sylow $p$-subgroup of $\Ker(f)$. Then $\exp(\Ker_p(f))\leq p^v< p^{2v}=|\Ker_p(f)|$. This implies $\rank_p \Ker(f)\geq 2$.
\end{proof}

Recall that Albert classified the endomorphism algebras $\mathrm{End}(A)\otimes \Q$ for simple abelian varieties $A$ into four types (\cite{Alb34,Alb35}, see also \cite[Proposition 5.5.7]{LB92} for a summary). Later, Shimura described the moduli of abelian varieties whose endomorphism algebra corresponds to each of these four types (\cite{Shi63}, see \cite[Chapter 9]{LB92} for a summary). Before stating and proving Proposition~\ref{prop: polarized rank simple surface} (Theorem~\ref{thm: rank main uniform}(2)), we recall some facts about \emph{quaternion algebras}.

\begin{definition}\label{defn: quaternion}
    Fix a field $k$ with $\mathrm{char}~k\neq 2$. A \emph{quaternion algebra} $F=F_k(a,b)$ over $k$ is defined as $F_k(a,b):=k\langle i,j\rangle/(i^2-a, j^2-b, ij = -ji)$ for $a,b\in k\setminus\{0\}$. We denote $ij$ by $u$. Then elements in $F_k(a,b)$ can be written as $$F_k(a,b)=\{x_1+x_2i+x_3j+x_4u\mid x_1,x_2,x_3,x_4\in k\}.$$ We have a natural representation of quaternion algebras
    $$\rho: F_k(a,b)\to M_{2\times 2}(\bar k),\ \ \ \rho(x)=\begin{pmatrix} 
    x_1+x_2\sqrt{a} & b(x_3-x_4\sqrt{a})\\ 
    x_3+x_4\sqrt{a} & x_1-x_2\sqrt{a}
    \end{pmatrix},$$ where $\bar k$ is the algebraic closure of $k$ and $x=x_1+x_2i+x_3j+x_4u$. Moreover, we have an anti-involution $x^*:= x_1-x_2i-x_3j-x_4u$ on $F_k(a,b)$. We define the \emph{trace} and the \emph{norm} of a quaternion $x$ as $T(x) = x + x^*\in k$ and $N(x) = xx^*\in k$, respectively.
\end{definition}

\begin{proposition}\label{prop: polarized rank simple surface}
    Let $A$ be a simple abelian surface, and let $f$ be a $q$-polarized self-isogeny of $A$. Then $\exp(\Ker(f))\mid q$. Moreover, for any prime $p\mid q$, we have $\rank_p\Ker(f)\geq 2.$
\end{proposition}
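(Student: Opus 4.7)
The plan is to proceed by a case analysis on $\mathrm{End}^0(A)$ via Albert's classification. For a simple abelian surface over $\mathbb{C}$, the only possibilities are (a) $\mathbb{Q}$; (b) a real quadratic field (Type I); (c) an indefinite quaternion algebra over $\mathbb{Q}$ (Type II); or (d) a quartic CM field (Type IV); cf.\ \cite{LB92}. In all cases $|\Ker f| = \deg f = q^2$, and once the exponent bound $\exp(\Ker f) \mid q$ is established, the rank statement is immediate: writing $q = p^v u$ with $\gcd(u,p) = 1$, the Sylow $p$-subgroup of $\Ker f$ has order $p^{2v}$ and exponent dividing $p^v$, forcing $\rank_p \Ker f \geq 2$.

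In Cases (a)--(c), the analytic representation of $\mathrm{End}^0(A) \otimes_\mathbb{Q} \mathbb{R}$ on $T_0 A \cong \mathbb{C}^2$ lands in $M_2(\mathbb{R}) \subset M_2(\mathbb{C})$: in (a) trivially; in (b) the real quadratic field acts diagonally via its two real embeddings; in (c) the splitting $B \otimes_\mathbb{Q} \mathbb{R} \cong M_2(\mathbb{R})$ of the indefinite quaternion algebra gives the required embedding. Hence $f$ is represented on $T_0 A$ by a real matrix $M_f \in M_2(\mathbb{R})$, and Lemma~\ref{lem: real characteristic polynomial} yields $\exp(\Ker f) \mid \det(M_f)$ together with $\rank_p \Ker f \geq 2$ for every prime $p \mid \det(M_f)$. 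Matching degrees, $\deg f = (\det M_f)^2 = q^2$, so $|\det M_f| = q$ (in Case (c), $\det M_f$ is the reduced norm $\mathrm{nrd}(f)$, and the characteristic polynomial of left multiplication by $f$ on $B$ being the square of the reduced characteristic polynomial again gives $|\mathrm{nrd}(f)| = q$).

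Case (d), the quartic CM case, is the main difficulty, since the analytic representation $M_f = \mathrm{diag}(\phi_1(f), \phi_2(f))$ attached to the CM type $\Phi = \{\phi_1, \phi_2\}$ is not real in general, and Lemma~\ref{lem: real characteristic polynomial} does not apply. Instead I would write $A \cong \mathbb{C}^2/\Phi(\Lambda)$ with $\Lambda$ a proper $\mathcal{O}$-ideal for $\mathcal{O} = \mathrm{End}(A)$, so that $\Ker f \cong \Lambda/f\Lambda$ as abelian groups. The Rosati involution on $\mathrm{End}^0(A) = F$ coincides with the CM complex conjugation $\bar{\cdot}$, and the $q$-polarized hypothesis becomes $f \bar f = q$ in $F$. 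The key step would be to prove $\bar f \in \mathcal{O}$ (equivalently, that $\mathcal{O}$ is stable under complex conjugation): granted this, $q \Lambda = f \bar f \Lambda \subseteq f \Lambda$ at once, whence $\exp(\Lambda/f\Lambda) \mid q$. To establish this stability in the general order setting, I would appeal to the explicit description of polarized CM abelian surfaces and the compatibility of the Riemann form with the Rosati involution developed in \cite{Shi63, LB92, Mur84}. This last step is the main obstacle and corresponds precisely to the ``detailed analysis of both the lattice of $A$ and the explicit structure of polarized endomorphisms of $A$'' invoked in the introduction.
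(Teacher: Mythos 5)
Your overall strategy --- Albert's classification of $\mathrm{End}^0(A)$ for a simple abelian surface, reduction to the real--characteristic--polynomial Lemma~\ref{lem: real characteristic polynomial} in the non-CM types, and a separate treatment of the CM type --- is the same as the paper's. Your handling of Types I and II is correct and essentially matches the paper's: you exhibit $M_f\in M_2(\R)$ directly, the paper shows the characteristic polynomial $x^2 - T(q_f)x + N(q_f)$ is real because $T(q_f),N(q_f)\in K\subset\R$; both land in Lemma~\ref{lem: real characteristic polynomial}. You also drop Type III (totally definite quaternion algebra), which in fact does not occur for a simple abelian surface, whereas the paper lists it (harmlessly, since the argument for it is the same as for Type II). One small omission: you split Type IV only into the quartic CM case, forgetting the imaginary quadratic subcase; simple abelian surfaces with $\mathrm{End}^0(A)$ imaginary quadratic exist, but there the analytic representation $\diag(\phi(f),\bar\phi(f))$ has characteristic polynomial $x^2 - \mathrm{Tr}_{K/\Q}(f)x + N_{K/\Q}(f)\in\Z[x]$, so Lemma~\ref{lem: real characteristic polynomial} disposes of it as well.

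The genuine gap is the quartic CM subcase, and you know it is there. You correctly reduce the exponent bound to the claim $\bar f\in\mathrm{End}(A)$ (which, together with $f\bar f=q$ in $F$, gives $q\Lambda\subseteq f\Lambda$), but you then declare this ``the main obstacle'' and leave it to unspecified references. That is not a proof: the Rosati involution preserves $\mathrm{End}^0(A)$, but its preservation of the integral order $\mathcal{O}=\mathrm{End}(A)$ is not automatic when the polarization is not principal, and the characteristic polynomial $(x-\phi_1(f))(x-\phi_2(f))$ of the analytic representation is not real in general (even under the polarization constraint $|\phi_1(f)|=|\phi_2(f)|$: take $f$ a unit of $\Q(\zeta_5)$ times an integer), so Lemma~\ref{lem: real characteristic polynomial} cannot be used here. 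For what it is worth, the paper's route through this case --- asserting that ``$f$ is a multiplication-by-$\lambda$ map'' and invoking Lemma~\ref{lem: complex structure of endomorphism ring of abelian variety} --- likewise requires $f$ to act as the scalar $\diag(\lambda,\lambda)$ on $\C^2$, which fails when $\phi_1(f)\neq\phi_2(f)$; so the difficulty you identify is a real one, and as written your proposal does not close it.
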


\begin{proof}
        Let $F:=\mathrm{End}(A)\otimes \mathbb{Q}$. Then the Rosati involution provides a positive anti-involution $*$ on $F$. Let $K$ be the center of $F$, and let $$d^2:=[F:K]\ \ \ \text{and}\ \ \ e:=[K:\Q].$$
        Albert classified all such pairs $(F,*)$ for simple abelian varieties into four types (see \cite{Alb34}, \cite{Alb35} and \cite[Proposition~5.5.7]{LB92}). Later, Murty described the Picard number for simple abelian varieties in each of the four cases (\cite[Lemma~3.3]{Mur84}). In particular, for simple abelian surfaces, we summarize the results as follows:
        \begin{itemize}
            \item \emph{Type I}: $F=K$ is a totally real number field.
            \item \emph{Type II}: $K=\Q$ and $F$ is a totally indefinite quaternion algebra over $\Q$. More specifically, $F=F_\Q(a,b)$ for some $a,b\in \Q_{>0}$.
            \item \emph{Type III}: $K\subset \R$, and $F$ is a totally definite quaternion algebra over $K$. More specifically, $F=F_K(a,b)$ for some $a,b\in K_{<0}$.
            \item \emph{Type IV}: $F=K$ is totally complex.
        \end{itemize}

        Now we consider the self-isogeny $f$ of $A$ with $\deg(f)\geq 2$ defined by $$f: (z_1,z_2)^t\mapsto f(z_1,z_2):=M \cdot (z_1, z_2)^t$$
        for some $M  \in M_{2\times 2}(\C)$, where $(z_1, z_2)$ are the coordinates on the universal cover $\C^2$ of $A$. Let $p_M(x)$ be the characteristic polynomial of $M$. Note that $|\Ker(f)|=\deg(f)=|\det(M)|^2$.
        
        We claim that $p_M(x)\in \R[x]$ if we are in \emph{Type I, II}, or \emph{III}. If we are in \emph{Type I}, then $M$ is a real matrix, and hence $p_M(x)\in \R[x]$ in this case. If we are in \emph{Type II} or \emph{Type III}, then $f$ is represented by a quaternion $q_f$ over a real field $K\subset \R$, and $M=\rho(q_f)^t$. In this case, $$p_M(x)=x^2-T(q_f)x+N(q_f)$$ with $T(q_f), N(q_f)\in K\subset \R$. It follows that $p_M(x)\in \R[x]$. This proves the claim. By Lemma~\ref{lem: real characteristic polynomial}, the assertion holds if we are in \emph{Type I, II}, or \emph{III}.

         If we are in \emph{Type IV}, then $f$ is a multiplication-by-$\lambda$ map for some complex number $\lambda$. By Lemma~\ref{lem: complex structure of endomorphism ring of abelian variety}, $\exp(\Ker(f))\mid q$. The `Moreover' part follows by the same argument as in Lemma~\ref{lem: real characteristic polynomial}.
\end{proof}

\begin{proposition}\label{prop: picard num 3 product case}
    Let $A$ be a non-simple abelian surface of Picard number three. Let $f$ be a self-isogeny of $A$ with $\deg f \geq 2$. Then $\rank \Ker(f)\geq 2$.
\end{proposition}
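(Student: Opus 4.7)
The plan is to exploit the fact that a non-simple abelian surface $A$ of Picard number three must be isogenous to $E \times E$ for some non-CM elliptic curve $E$. For elliptic curves $E_1, E_2$ one has
$$\rho(E_1 \times E_2) \;=\; 2 + \rank \Hom(E_1, E_2),$$
which equals $2$ when $E_1, E_2$ are non-isogenous, $3$ when they are isogenous but non-CM, and $4$ when they are isogenous and CM. Since the Picard number is invariant under isogeny (over a field of characteristic zero), non-simplicity together with $\rho(A) = 3$ forces $A$ to be isogenous to $E \times E$ for a non-CM elliptic curve $E$. In particular, $\mathrm{End}(A) \otimes \Q \cong \mathrm{End}(E \times E) \otimes \Q = M_{2\times 2}(\Q)$, where the latter identification uses $\mathrm{End}(E) = \Z$.

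Next, I would fix an isogeny $\phi \colon A \to E \times E$ and identify its lift to the common universal cover $\C^2$ with the identity. With these coordinates on $\C^2$, the self-isogeny $f$ of $A$ corresponds under $\phi$ to the rational endomorphism $\phi \circ f \circ \phi^{-1} \in \mathrm{End}(E \times E) \otimes \Q = M_{2\times 2}(\Q)$, so $f$ lifts on $\C^2$ to multiplication by some real (indeed rational) matrix $M \in M_{2\times 2}(\Q)$.

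I would then apply Lemma~\ref{lem: real characteristic polynomial} to $M$: it yields that the characteristic polynomial $p_M(x)$ lies in $\Z[x]$ (so in particular $\det(M) \in \Z$) and that $\exp(\Ker(f)) \mid \det(M)$. Since $\deg(f) = \det(M)^2 \geq 2$ and $\det(M)$ is a nonzero integer, we have $|\det(M)| \geq 2$. Therefore
$$|\Ker(f)| \;=\; \det(M)^2 \;>\; |\det(M)| \;\geq\; \exp(\Ker(f)),$$
which forces $\Ker(f)$ to be non-cyclic. Hence $\rank \Ker(f) \geq 2$.

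The main subtle point is the second step: one must carefully justify that $f$ lifts to multiplication by a rational matrix on the universal cover of $A$, since $A$ is only isogenous (not equal) to $E \times E$. This is handled by choosing coordinates on $\C^2$ compatible with the lattice of $E \times E$ via the isogeny $\phi$, after which any element of $\mathrm{End}(A) \otimes \Q \hookrightarrow \mathrm{End}(E \times E) \otimes \Q = M_{2 \times 2}(\Q)$ acts by a rational matrix. Once this is in place, Lemma~\ref{lem: real characteristic polynomial} applies directly and the conclusion is immediate. As a bonus, the argument in fact shows $\rank_p \Ker(f) \geq 2$ for every prime $p \mid \det(M)$.
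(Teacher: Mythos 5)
Your proof is correct and follows essentially the same route as the paper's: establish that $A$ is isogenous to $E\times E$ with $E$ non-CM (the paper cites Shioda--Mitani, Murty, or Hulek--Laface for this, whereas you re-derive it from the Picard number formula $\rho(E_1\times E_2)=2+\rank\Hom(E_1,E_2)$, which is equivalent content), deduce $\mathrm{End}(A)\otimes\Q\cong M_{2\times 2}(\Q)$ so that $f$ is represented by a rational matrix, and then invoke Lemma~\ref{lem: real characteristic polynomial}. Your closing inequality is a slight detour — the lemma already delivers $\rank_p\Ker(f)\geq 2$ for any $p\mid\det(M)$ directly — but the argument is sound.
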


\begin{proof}
By \cite{SM74} and \cite[Lemma~3.3]{Mur84} (or \cite[Corollary~2.6]{HL19}), $A$ is isogenous to $E\times E$, where $E$ is an elliptic curve without complex multiplication. In particular, $\mathrm{End}(E)\otimes \mathbb{Q}=\mathbb{Q}$. Since $A$ and $E\times E$ are isogenous, $$\mathrm{End}(A)\otimes \mathbb{Q}=\mathrm{End}(E\times E)\otimes \mathbb{Q}=M_{2\times 2}(\mathbb{Q}).$$ It follows that $f$ is represented by a $2\times 2$ matrix with rational coefficients. The assertion then follows from Lemma~\ref{lem: real characteristic polynomial}. 
\end{proof}

Now we prove Theorem~\ref{thm: rank main uniform}(1) when the Picard number $\rho(A)=4$. The main ingredient in the proof is Shioda and Mitani's structure theorem of abelian surfaces with Picard number four.

\begin{proposition}\label{prop: picard num 4}
    Let $A$ be an abelian surface such that $\rho(A)=4$. Then there is a positive integer $s$ depending only on $A$, such that for any polarized self-isogeny $f$ of $A$, $\rank\Ker(f^s)\geq 2$. 
\end{proposition}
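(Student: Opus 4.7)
My plan is to work directly on the integral homology lattice $\Lambda := H_1(A,\mathbb{Z}) \cong \mathbb{Z}^4$, bypassing the Shioda--Mitani decomposition of $A$ as $E \times E$ with $E$ a CM elliptic curve. I will let $F := f_* \in M_{4 \times 4}(\mathbb{Z})$ denote the induced action on $\Lambda$. Its characteristic polynomial $p_F(x) \in \mathbb{Z}[x]$ has degree four, and over $\bar{\mathbb{Q}}$ its roots (with multiplicities) are $\{\lambda_1, \lambda_2, \bar\lambda_1, \bar\lambda_2\}$, where $\lambda_1, \lambda_2$ are the eigenvalues of $f^*$ on $H^{1,0}(A)$. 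By Lemma~\ref{lem: eigenvalues of polarized map}, the polarization condition will give $\lambda_i \bar\lambda_i = q$ for $i = 1, 2$.

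The core of the argument will be a simple $p$-adic pigeonhole. Fix a rational prime $p \mid q$ and any prime $\mathfrak{p}$ of $\bar{\mathbb{Z}}$ lying above $p$; then for each $i$, $v_\mathfrak{p}(\lambda_i) + v_\mathfrak{p}(\bar\lambda_i) = v_\mathfrak{p}(q) \geq 1$, so at least one of $\lambda_i, \bar\lambda_i$ reduces to zero modulo $\mathfrak{p}$. Hence at least two of the four roots of $p_F$ vanish mod $\mathfrak{p}$, and $x^2$ divides $\bar p_F(x) := p_F(x) \bmod p$ in $\mathbb{F}_p[x]$. Writing $\bar p_F(x) = x^a g(x)$ with $a \geq 2$ and $g(0) \neq 0$, the algebraic multiplicity of zero as an eigenvalue of $\bar F := F \bmod p$ on $\mathbb{F}_p^4$ is $a$. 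A short Jordan-block case check, with block sizes $k_1 \geq \cdots \geq k_m$ at zero satisfying $\sum k_i = a$, yields $\dim_{\mathbb{F}_p} \Ker(\bar F^s) = \sum_i \min(s, k_i) \geq 2$ for every $s \geq 2$: either $m \geq 2$ (and $s = 1$ already suffices), or $m = 1$ with $k_1 = a \geq 2$ (and $s = 2$ gives $\min(2, a) = 2$).

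To conclude, I will invoke the standard identification $\Ker(f^s) \cong \Lambda/F^s \Lambda = \mathrm{Coker}(F^s)$ together with the Smith normal form of $F^s$: the $\mathbb{F}_p$-dimension of $\mathrm{Coker}(F^s) \otimes \mathbb{F}_p \cong \mathrm{Coker}(\bar F^s)$ equals $\dim_{\mathbb{F}_p} \Ker(\bar F^s) \geq 2$, forcing at least two elementary divisors of $F^s$ to be divisible by $p$ and hence strictly greater than $1$. This yields $\rank \Ker(f^s) \geq 2$ for every $s \geq 2$, so one may take the uniform constant $s = 2$. The hard part is really just the $p$-adic pigeonhole; once the polarization identity $\lambda_i \bar\lambda_i = q$ is in hand, the rest is essentially linear algebra. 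As an aside, this argument does not actually use the hypothesis $\rho(A) = 4$ and so proves the analogous statement for any abelian surface; the Picard-number restriction in the statement simply reflects that the other cases of Theorem~\ref{thm: rank main uniform}(1) have already been handled by Propositions~\ref{prop: polarized rank simple surface} and~\ref{prop: picard num 3 product case} via Lemma~\ref{lem: real characteristic polynomial}.
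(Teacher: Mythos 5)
Your argument is correct and takes a genuinely different route from the paper --- and in fact a stronger one. The paper's proof relies on the Shioda--Mitani structure theorem for abelian surfaces of Picard number four: it writes $A\cong\C/\Lambda_\tau\times\C/\Lambda_{a\tau}$, computes $\det M$ and $\mathrm{tr}(M)^2$ in that lattice, shows the eigenvalue ratio $\lambda_1/\lambda_2$ lies in a finite set $\Gamma(A)$ whose size is controlled by the invariant $a$, and applies a pigeonhole to find an iterate of $f$ acting as a complex scalar; the resulting index $s$ therefore depends on $A$. You instead reduce the integer matrix $F=f_*$ on $H_1(A,\Z)\cong\Z^4$ modulo a prime $p\mid q$: since the four roots of $p_F$ are algebraic integers and $\lambda_i\bar\lambda_i=q$ forces at least one of each conjugate pair to have positive $\mathfrak{p}$-valuation, you get $x^2\mid \bar p_F$; the Jordan-block count $\dim_{\mathbb{F}_p}\Ker(\bar F^s)=\sum_i\min(s,k_i)\geq 2$ for $s\geq 2$ is elementary (the Jordan form at the eigenvalue $0$ exists over $\mathbb{F}_p$); and the identifications $\Ker(f^s)\cong\Lambda/F^s\Lambda$ and $(\Lambda/F^s\Lambda)\otimes\mathbb{F}_p\cong\mathbb{F}_p^4/\bar F^s\mathbb{F}_p^4$, together with rank-nullity, transfer this to $\rank_p\Ker(f^s)\geq 2$. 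All the steps check out. Your version buys a \emph{uniform} constant $s=2$ independent of $A$ (optimal in view of Example~\ref{ex: counterexample for KZ question}) and, as you observe, never uses $\rho(A)=4$, so it proves Theorem~\ref{thm: rank main uniform}(1) in one stroke for every abelian surface; indeed the same calculation shows $\rank_p\Ker(f^{\dim A})\geq\dim A$ for any polarized self-isogeny of any abelian variety, giving a uniform index $s=\dim A$ in Theorem~\ref{thm: rank main}, whereas the paper obtains such uniformity only under extra hypotheses and with $s$ depending on $|K(\mathcal{L})|$ (Lemma~\ref{lem: rank main 1}, Proposition~\ref{prop: rank main 1 uniform}). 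The one thing your argument does not replace is the sharper $s=1$ bound on simple abelian surfaces (Proposition~\ref{prop: polarized rank simple surface}), which Theorem~\ref{thm: simple abelian surface uniform index} requires separately.
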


\begin{proof}
Since $\rho(A)=4$, by \cite[Theorem~4.1(iii) and the second paragraph on Page~272]{SM74}, there exist positive integers $a,b,c$ such that $\Delta:=b^2-4ac<0$, and $$A\cong \mathbb{C}/\Lambda_{\tau_1}\times\mathbb{C}/\Lambda_{\tau_2},$$ where $\Lambda_{\tau_i}=\mathbb{Z}\oplus \mathbb{Z}\tau_i$ for $i\in \{1,2\}$, $\tau_1=\frac{-b+\sqrt{\Delta}}{2a}$ and $\tau_2=\frac{b+\sqrt{\Delta}}{2}$. Note that $\tau_2=a\tau_1+b$. Hence $\Lambda_{\tau_2}$ and $\Lambda_{a\tau_1}$ are exactly the same lattice. Replacing $\tau_2$ with $a\tau_1$ and abbreviating $\tau_1$ as $\tau$, we may assume that $A\cong \mathbb{C}/\Lambda_{\tau}\times\mathbb{C}/\Lambda_{a\tau}$, where
\begin{align}\label{eq: properties of tau}
    a\tau^2+b\tau+c=0.
\end{align}

Now, we let $g$ be any polarized surjective self-isogeny of $A$ such that $\Ker (g^2)$ (and hence $\Ker(g)$) is cyclic. We may write $g$ as a matrix multiplication map $$g: (z_1,z_2)^t \mapsto g(z_1,z_2):=M\cdot (z_1,z_2)^t,\ \ M\in M_2(\mathbb{C}).$$ 
$\Lambda:=\Lambda_{\tau} \times \Lambda_{a \tau}$ has a $\Z$-basis $\{(1, 0)^t, (\tau, 0)^t, (0, 1)^t, (0, a \tau)^t\}$.
Since $g$ is surjective, $\mathrm{det}(M)\neq 0$. 
\begin{claim}\label{claim: ration of eigenvalues in finite set}
    Let $\lambda_1,\lambda_2$ be the  complex eigenvalues of $M$. Then $\frac{\lambda_1}{\lambda_2}$ belongs to a finite set $\Gamma(A)$ depending only on $A$.
\end{claim}

We first prove Proposition~\ref{prop: picard num 4} assuming the above claim. We set $I:=[1,|\Gamma(A)|+1]\cap \mathbb{Z}$, where $|\Gamma(A)|$ denotes the cardinality. Suppose to the contrary that $\Ker (f^s)$ is cyclic for all $s\in I$. We let $\lambda_1,\lambda_2$ be the two eigenvalues of the complex matrix representing $f$. Then $\lambda_1^s, \lambda_2^s$ are the eigenvalues of the complex matrix representing $f^s$ for each $s$. By Claim~\ref{claim: ration of eigenvalues in finite set}, $$\{\frac{\lambda_1^s}{\lambda_2^s}\mid s\in I\}\subset \Gamma(A).$$ Hence, by the pigeonhole principle, there exist two distinct positive integers $s_1,s_2\in I$ such that $\frac{\lambda_1^{s_1}}{\lambda_2^{s_1}}=\frac{\lambda_1^{s_2}}{\lambda_2^{s_2}}$. It follows that $\lambda_1^{|s_1-s_2|}=\lambda_2^{|s_1-s_2|}=\lambda$ for some $\lambda\in \mathbb{C}$. For simplicity, we write $s=|s_1-s_2|\in I$. It follows that $f^s$ is represented by $\lambda I$, a complex scalar multiplication map. In particular, $f:=h_1\times h_2$, where $h_1$ and $h_2$ are the multiplication-by-$\lambda$ maps on $\mathbb{C}/\Lambda_{\tau}$ and $\mathbb{C}/\Lambda_{a\tau}$, respectively. Hence $\Ker (f^s)=\Ker (h_1)\times \Ker (h_2)$, where $|\Ker (h_1)|=|\Ker (h_2)|=|\lambda|^2$. This implies $\mathrm{rank} \Ker(f^s)\geq 2$. This contradicts our assumption that $\rank\Ker(f^s)=1$. We are done.

\begin{proof}[Proof of Claim~\ref{claim: ration of eigenvalues in finite set}]
    Notice that $g$ induces a lattice self-map on $\Lambda$. It follows that $g(1,0)^t\in\Lambda$ and $g(0,1)^t\in \Lambda$, where $\Lambda=\Lambda_{\tau}\times \Lambda_{a\tau}$. Hence $$M=\begin{pmatrix} \alpha_{11} + \beta_{11}\tau & \alpha_{12} + \beta_{12}\tau \\ \alpha_{21} + \beta_{21}a\tau & \alpha_{22} + \beta_{22}a\tau \end{pmatrix}$$ for some integers $\alpha_{ij},\beta_{ij}$ with $i,j\in \{1,2\}$. Since $g(\tau,0)^t\in \Lambda$ and $g(0,a\tau)^t\in \Lambda$, 
\begin{align}\label{eq: condition on coeff of g}
     (\alpha_{11} + \beta_{11}\tau)\cdot\tau, \ (\alpha_{12} + \beta_{12}\tau)\cdot a\tau \in \Lambda_{\tau}\ \ and\ \ (\alpha_{21} + \beta_{21}a\tau)\cdot \tau, \ (\alpha_{22} + \beta_{22}a\tau)\cdot a\tau \in \Lambda_{a\tau}.
\end{align}
Since the multiplication by $M$ map has degree $|\mathrm{det}(M)|^2$ and $\Ker (M)$ is a cyclic group, there exists an element in $\Ker (M)$ whose order is equal to $|\mathrm{det}(M)|^2$. Notice that 
$$\Ker (M)=\{M^{-1}\cdot (m_1+n_1\tau,m_2+n_2a\tau)^t\mid m_1,m_2,n_1,n_2\in \mathbb{Z}\},$$
and 
$$M^{-1}=\mathrm{det}(M)^{-1}M^*=\frac{\overline{\det(M)}}{|\det(M)|^2}\begin{pmatrix} \alpha_{22} + \beta_{22}a\tau & -(\alpha_{12} + \beta_{12}\tau) \\ -(\alpha_{21} + \beta_{21}a\tau) & \alpha_{11} + \beta_{11}\tau \end{pmatrix},$$
where $M^*$ denotes the adjoint matrix of $M$. By \eqref{eq: condition on coeff of g}, we know that $M^*(\tau,0)^t \in \Lambda$ and $M^*(0,a\tau)^t\in \Lambda$. Hence the multiplication by $M^*$ defines a lattice self-map of $\Lambda$. It follows that $$\Ker (M)\subset \{\frac{\overline{\det(M)}}{|\det(M)|^2}(m_1+n_1\tau,m_2+n_2a\tau)^t\mid m_1,m_2,n_1,n_2\in \mathbb{Z}\}.$$ 
Note that by \eqref{eq: properties of tau}, $$\det(M)=(\alpha_{11}+\beta_{11}\tau)(\alpha_{22}+\beta_{22}a\tau)-(\alpha_{12}+\beta_{12}\tau)(\alpha_{21}+\beta_{21}a\tau)\in \Lambda_{\tau}.$$ We may write $\det(M)=p+q\tau$ for some integer $p$ and $q$, and we set $l=\mathrm{gcd}(p,q,|\det(M)|^2)$. By \eqref{eq: properties of tau}, $\tau\bar\tau=\frac{c}{a}$ and $\tau+\bar\tau=-\frac{b}{a}$. This implies that for each $(z_1,z_2)\in \Lambda = \Lambda_{\tau}\times \Lambda_{a\tau}$, by actual multiplication, we get
$$\frac{1}{|\mathrm{det}(M)|^2}\begin{pmatrix} \overline{\det(M)}& 0 \\ 0 & \overline{\det(M)} \end{pmatrix}\cdot (z_1,z_2)^t \in \frac{l}{a|\det(M)|^2}\Lambda_{\tau}\times \frac{l}{a|\det(M)|^2}\Lambda_{a\tau}.$$ Hence $l=\mathrm{gcd}(p,q,|\det(M)|^2)\mid a$ in order that there exists an order $|\det(M)|^2$ element in $\Ker (M)$. Notice also that by \eqref{eq: properties of tau}, $$|\det(M)|^2=(p+q\tau)(p+q\bar\tau)=p^2+\frac{c}{a}q^2-\frac{b}{a}pq = p^2 + \frac{cq^2 - bpq}{a}.$$ It follows that
\begin{align}\label{eq: gcd of det M}
    \mathrm{gcd}(p,q)\mid a \, \mathrm{gcd}(p,q,\frac{cq^2-bpq}{a})=a \, \mathrm{gcd}(p,q,|\det(M)|^2)\mid a^2.
\end{align}
We let $\lambda_1, \lambda_2$ be the two complex eigenvalues of $M$. By Lemma~\ref{lem: eigenvalues of polarized map}, $g$ being polarized implies that $M$ is diagonalizable and $|\lambda_1|=|\lambda_2|$. In particular, $\lambda_1+\lambda_2$ is perpendicular to $\lambda_1-\lambda_2$ as real vectors on $\mathbb{C}$. Then either $\lambda_1 = \pm \lambda_2$, in which case $g^2$ 
is represented by a diagonal matrix and hence $\rank \Ker(g^2) \ge 2$ (contradicting the assumption), or $\lambda_1 \ne \pm \lambda_2$ and $i\mu(\lambda_1+\lambda_2)=\lambda_1-\lambda_2$ for some real number $\mu$. It follows that $$\frac{\mathrm{det}(M)}{\mathrm{tr}(M)^2}=\frac{\lambda_1\lambda_2}{(\lambda_1+\lambda_2)^2}=\frac{(\lambda_1+\lambda_2)^2-(\lambda_1-\lambda_2)^2}{4(\lambda_1+\lambda_2)^2}=\frac{1+\mu^2}{4}\geq \frac{1}{4}.$$
By \eqref{eq: properties of tau} and \eqref{eq: condition on coeff of g}, we have 
\begin{align*}
    \mathrm{tr}(M)^2&=(\alpha_{11}+\beta_{11}\tau+\alpha_{22}+a\beta_{22}\tau)^2 \\
    &\equiv (\alpha_{11}+\beta_{11}\tau)\cdot (\beta_{11}+a\beta_{22})\tau+(\alpha_{22}+a\beta_{22}\tau)\cdot (\beta_{11}+a\beta_{22})\tau\equiv 0           \mod \Lambda_{\tau}.
\end{align*}
Hence $\mathrm{tr}(M)^2\in \Lambda_{\tau}$, and we may write $\mathrm{tr}(M)^2=x+y\tau$ for some integers $x,y$. Recall that $\det(M)=(p'+q'\tau)\mathrm{gcd}(p,q)$ for some coprime integers $p',q'$ and that $\mathrm{gcd}(p,q)\mid a^2$. We have $$\R \, \ni \, \frac{\mathrm{det}(M)}{\mathrm{tr}(M)^2}=\frac{\lambda_1\lambda_2}{(\lambda_1+\lambda_2)^2}=\mathrm{gcd}(p,q)\frac{p'+q'\tau}{x+y\tau}\in \{\frac{a^2}{n} \mid n\in \mathbb{Z}\}\cap[\frac{1}{4},\infty) =: \Gamma(A),$$ where the last inclusion relation follows from that $p'+q'\tau$ is a primitive $\mathbb{Z}$-vector in $\Lambda_{\tau}$: indeed, write  
$(p' + q' \tau)/(x + y \tau) = r \in \R$, we get $r = p'/x = q'/y$. 
Hence $\frac{\lambda_1\lambda_2}{(\lambda_1+\lambda_2)^2} = \frac{\nu}{(1 + \nu)^2}$ with $\nu = \lambda_1/\lambda_2$ belongs to the above displayed finite set $\Gamma(A)$ with cardinality at most $4a^2$, and consequently $\nu = \frac{\lambda_1}{\lambda_2}$ belongs to a finite set with cardinality at most $8a^2$.
\end{proof}
\end{proof}

Now we are ready to prove Theorem~\ref{thm: rank main uniform}.

\begin{proof}[Proof of Theorem~\ref{thm: rank main uniform}]
    (2) follows from Proposition~\ref{prop: polarized rank simple surface}.
    
    We prove (1) according to $\rho(A)$. Notice that $\rho(A)\leq 4$. The case $\rho(A)\in \{1,2\}$ follows from Proposition~\ref{prop: rank main 1 uniform}. The case $\rho(A)=3$ follows from Propositions~\ref{prop: polarized rank simple surface} and \ref{prop: picard num 3 product case}. The case $\rho(A)=4$ follows from Proposition~\ref{prop: picard num 4}.
\end{proof}

\subsection{Uniform incompressible index in dimension two: Proof of Theorem~{\ref{thm: simple abelian surface uniform index}}}

\begin{proposition}\label{prop: uniform surface incompressible index stable}
    Let $A$ be an abelian surface and let $f$ be a $q$-polarized endomorphism of $A$. Suppose that every one-dimensional subtorus of $A$ is numerically $f$-stable. Then $f$ is either incompressible or $2$-polarized.
\end{proposition}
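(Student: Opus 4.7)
The goal is to apply Lemma~\ref{lem: main lemma with all subtori f stable} in dimension two with the sharpest possible constants, so that the incompressibility threshold $s>\log_q(J)+\dim(A)\log_q C$ specializes to $s>\log_q 2$; this gives $s=1$ as soon as $q\geq 3$, leaving $q=2$ as the only exception.

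First, by Remark~\ref{rem: kernel and incompressible}(4), I may replace $f$ with a self-isogeny, since translates preserve both polarization and the numerical $f$-stability of subtori. Under this reduction, for every $1$-subtorus $E$, one has $0_A=f(0_A)\in f(E)$ and $0_A\in E$, so Lemma~\ref{lem: numerically parallel abelian varieties} upgrades numerical $f$-stability to the genuine equality $f(E)=E$. In a surface the only proper nonzero subtori are the $1$-dimensional ones, so hypothesis (1) of Lemma~\ref{lem: main lemma with all subtori f stable} is satisfied.

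The key step is to verify hypothesis (2) of Lemma~\ref{lem: main lemma with all subtori f stable} with $C=1$, i.e., $\exp(\Ker f)\mid q$. If $A$ is simple, this is exactly Proposition~\ref{prop: polarized rank simple surface}. If $A$ is non-simple, Poincar\'e complete reducibility yields an isogeny $E_1\times E_2\to A$ with $E_i$ elliptic; combined with the $f$-stability of every $1$-subtorus of $A$, Lemma~\ref{lem: basic property of polarized end of product abelian} gives $\exp(\Ker f)\mid q$. A short induction using $f(qx)=q\,f(x)$ then yields $\exp(\Ker f^s)\mid q^s$ for every $s\geq 1$, so $C=1$ works uniformly.

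Finally, Remark~\ref{rem: dim two case choose the constant J} supplies $J=2$ in dimension two via the classification of finite abelian subgroups of $\mathrm{PGL}(2,\mathbb{C})$. Plugging $J=2$, $C=1$, $\dim(A)=2$ into Lemma~\ref{lem: main lemma with all subtori f stable} shows $f^s$ is incompressible for every $s>\log_q 2$. For $q\geq 3$ this is satisfied at $s=1$, so $f$ itself is incompressible; for $q=2$ one lands in the exceptional $2$-polarized case of the conclusion. I do not expect a substantial obstacle, since the heavy lifting is done by Proposition~\ref{prop: polarized rank simple surface}, Lemma~\ref{lem: basic property of polarized end of product abelian}, and Lemma~\ref{lem: main lemma with all subtori f stable}; the argument amounts to a careful bookkeeping of the constants specialized to $\dim(A)=2$, with the tightness in the $q=2$ case precisely matching the statement of the proposition.
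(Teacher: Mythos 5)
Your proof is correct and follows essentially the same route as the paper: reduce to a self-isogeny, split into the simple and non-simple cases, obtain $\exp(\Ker f)\mid q$ from Proposition~\ref{prop: polarized rank simple surface} (simple case) or Lemma~\ref{lem: basic property of polarized end of product abelian} (non-simple case), feed $J=2$, $C=1$ into Lemma~\ref{lem: main lemma with all subtori f stable} and Remark~\ref{rem: dim two case choose the constant J}, and observe that $\log_q 2<1$ exactly when $q\geq 3$. The only difference is that you make explicit (via Lemma~\ref{lem: numerically parallel abelian varieties}(3)) the upgrade from numerical $f$-stability to actual $f$-stability of the one-dimensional subtori, a step the paper's proof leaves tacit, so your write-up is in fact slightly more careful while being the same argument.
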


\begin{proof}
As in Theorem~\ref{thm: main with all subtori f numerically periodic}, by replacing $f$ with its translation, we may assume that $f$ is a self-isogeny and every one-dimensional subtorus of $A$ is numerically $f$-stable. We discuss two cases.

\noindent {\bf Case 1}. $A$ is simple. By Proposition~\ref{prop: polarized rank simple surface}, $\exp(\Ker(f))\mid q$. This implies that $\exp(\Ker(f^s))\mid q^s$ for all positive integers $s$. By Lemma~\ref{lem: main lemma with all subtori f stable} and Remark~\ref{rem: dim two case choose the constant J} (where we can take $J=2$ and $C=1$), $f^s$ is incompressible for $s>\log_q (2)$.

\noindent {\bf Case 2}. $A$ is non-simple. In this case, $A$ is isogenous to the product of two elliptic curves. By Lemma~\ref{lem: basic property of polarized end of product abelian}, $\exp(\Ker(f))\mid q$. Then the same argument as in {\bf Case 1} implies that $f^s$ is incompressible for $s>\log_q (2)$. 

If $q>2$, then $\log_q (2)<1$, and hence $f$ is incompressible in both cases. We are done.
\end{proof}

As a direct corollary of Proposition~\ref{prop: uniform surface incompressible index stable}, we have the following, which includes Theorem~\ref{thm: simple abelian surface uniform index}.

\begin{corollary}\label{cor: corollary of stable incompressible in dimension two}
    Let $A$ be an abelian surface and let $f$ be a polarized endomorphism $f$ of $A$. Then the following statements hold.
    \begin{enumerate}
        \item If $A$ is simple, then either $f$ is incompressible or $f$ is $2$-polarized.
        \item If the Picard number $\rho(A)\leq 2$, then $f^2$ is incompressible.
    \end{enumerate}
\end{corollary}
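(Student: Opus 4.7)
The plan is to derive both parts as immediate consequences of Proposition~\ref{prop: uniform surface incompressible index stable}, whose hypothesis is the numerical $f$-stability of every one-dimensional subtorus.

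For (1), since $A$ is simple, $A$ has no proper nontrivial subtori; in particular there are no one-dimensional subtori in $A$. Hence the hypothesis of Proposition~\ref{prop: uniform surface incompressible index stable} is vacuously satisfied and the dichotomy ``$f$ is incompressible or $f$ is $2$-polarized'' follows directly.

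For (2), I would apply Proposition~\ref{prop: uniform surface incompressible index stable} not to $f$ but to $g:=f^2$. Since $f$ is $q$-polarized with $q\ge 2$, the iterate $g$ is $q^2$-polarized with $q^2\ge 4$; in particular $g$ is never $2$-polarized. It therefore remains to check that every one-dimensional subtorus $E\subset A$ is numerically $g$-stable. For this, I would use the observation already invoked in the proof of Proposition~\ref{prop: rank main 1 uniform}: when $\rho(A)\le 2$, the operator $(f^2)^*$ acts as a scalar on $\NS(A)_{\R}$. The justification is that by Lemma~\ref{lem: eigenvalues of polarized map} the action of $f^*$ on $H^{1,1}(A)$ is diagonalizable with all eigenvalues of modulus $q$; restricted to the at most two-dimensional real subspace $\NS(A)_{\R}$, one eigenvalue is $q$ (realized by the polarization class), and the other (if any) is a real eigenvalue of modulus $q$, hence $\pm q$. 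In either case $(f^2)^*$ acts on $\NS(A)_{\R}$ by multiplication by $q^2$. Consequently $(f^2)^*E\equiv q^2 E$, and by Lemma~\ref{lem: numerically f-periodic elliptic curve and quotient}(3) this is equivalent to $E$ being numerically $f^2$-stable.

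Having verified the hypothesis, Proposition~\ref{prop: uniform surface incompressible index stable} applied to $g=f^2$ yields that $f^2$ is incompressible or $2$-polarized, and since $q^2\ne 2$ the latter is excluded. No real obstacle is anticipated; the only point requiring mild care is the eigenvalue bookkeeping that turns the numerical condition ``$\rho(A)\le 2$'' into the stability hypothesis needed by the proposition.
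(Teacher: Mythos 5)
Your proof is correct and follows essentially the same route as the paper's: both parts are derived by invoking Proposition~\ref{prop: uniform surface incompressible index stable}, with (1) following vacuously from simplicity and (2) obtained by applying the proposition to $f^2$ after observing that $(f^2)^*$ is a scalar on $\NS(A)_{\Q}$ when $\rho(A)\le 2$ (a fact the paper states without elaboration in Proposition~\ref{prop: rank main 1 uniform}, and for which your eigenvalue bookkeeping gives a clean justification), together with the observation that $f^2$ is $q^2$-polarized with $q^2>2$ so the ``$2$-polarized'' alternative is excluded.
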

\begin{proof}
By replacing $f$ with a translation as in Theorem~\ref{thm: main with all subtori f numerically periodic}, we may assume $f$ is a self-isogeny.

For (1), $A$ does not contain a subtorus of dimension one. For (2), every subtorus of $A$ is numerically $f^2$-stable, hence $f^2$-stable (cf.~Lemma \ref{lem: numerically parallel abelian varieties}). We also note that $f^2$ is $q$-polarized for some $q>2$. By Proposition~\ref{prop: uniform surface incompressible index stable}, we are done.
\end{proof}

\begin{remark}\label{rem:2-pol}
    If $f$ is a $2$-polarized self-isogeny of a simple abelian surface $A$ then as proved before, $\Ker(f)=\Z/2\Z\times \Z/2\Z$, and the eigenvalues of $f$ (acting on the universal cover of $A$) satisfy a monic polynomial with integer coefficients of the form $x^2-\alpha x+\beta$, where $|\beta|=2$ (see the proof of Proposition~\ref{prop: polarized rank simple surface} and Lemmas~\ref{lem: complex structure of endomorphism ring of abelian variety} and \ref{lem: real characteristic polynomial}). Since $f$ is polarized, the eigenvalues of the monic have the same modulus. It follows that either $\alpha=0$ and $\beta=-2$, or $\alpha^2-4\beta\leq 0$. In the latter case, $\beta=2$ and $|\alpha|\in \{0,1,2\}$. Thus $(\alpha,\beta)\in \{(0,-2), (0,2), (\pm 1,2), (\pm 2,2)\}$.

    By \cite[Proposition~7]{Led}, the essential dimension of $\Ker(f)$ over a characteristic zero field is two. This does not immediately determine $\mathrm{ed}(f)$.
\end{remark}

\section{Further questions}

\subsection{Essential dimensions of simple abelian varieties}

Recall that in Theorem~\ref{thm: simple abelian surface uniform index} and Proposition~\ref{prop: uniform surface incompressible index stable}, we proved that a polarized endomorphism on a simple abelian surface is incompressible except when $f$ is $2$-polarized. It is natural to ask the following question.

\begin{question}\label{que: strong restricted rank lower bound}
    Let $A$ be a simple abelian variety and let $f$ be a polarized endomorphism of $A$. Is it true that $f$ is incompressible?
\end{question}

In view of Theorem~\ref{thm: rank main uniform} and Remark~\ref{rem: kernel and incompressible}(1), we ask the following question. Note that an affirmative answer to 
Question~\ref{que: strong restricted rank lower bound} would imply an affirmative answer to this question.

\begin{question}
    Let $A$ be a simple abelian variety and let $f$ be a polarized endomorphism of $A$. Is it true that $\rank_p \Ker (f)\geq \dim(A)$ for any prime $p\mid \deg(f)$?
\end{question}

\medskip

\subsection{Essential dimensions of polarized endomorphisms in positive characteristic}

Zhuang pointed out the following example, originally due to Fakhruddin 
\cite[after Question~19]{KZ}, of a polarized endomorphism on $(\mathbf{P}^1)^n$ 
over a field of characteristic $p$ whose iterations all have essential dimension one.

\begin{example}\label{ex: positive characteristic iteration}
Consider the endomorphism $f$ of $(\mathbf{P}^1)^n$ given by 
$$(x_1,\ldots,x_n) \mapsto (x_1^p - x_1,\ldots,x_n^p - x_n),$$ so that $f$ restricted to each factor is an 
Artin--Schreier cover with Galois group $\Z/p\Z$. Since the essential dimension of 
an elementary abelian $p$-group over a field of characteristic $p$ is one 
(see \cite[Proposition 5]{Led}), we have $\mathrm{ed}(f)\leq \mathrm{ed}((\Z/p\Z)^n)=1$ by \cite[Theorem~3.1(c)]{BR97}. Moreover, for any iterate $f^s$ with $s>1$, the Galois group is $(\Z/p\Z)^{ns}$, which also has essential dimension one. Hence 
$\mathrm{ed}(f^s) = 1$ for all $s \geq 1$. 
\end{example}

\begin{question}
    Does Theorem~\ref{thm: ed main} hold for endomorphisms in characteristic $p$ when $\deg(f)$ is coprime to $p$?
\end{question}

\medskip

\subsection{Dynamical Manin--Mumford conjecture and essential dimension}\label{subsec: dynamic Manin Mumford}

Recall that the original version of the algebraic dynamical Manin--Mumford Conjecture \cite[Conjecture 2.1]{GTZ} asserts: \emph{Let $\phi: X\to X$ be a polarized endomorphism of a projective variety defined over the complex numbers, and
let $Y$ be a subvariety of $X$. Then $Y$ is preperiodic if and only if $Y \cap \mathrm{Prep}_\phi(X)$ is Zariski dense in $Y$.} Here, $\mathrm{Prep}_\phi(X)$ denotes the set of all $f$-preperiodic points in $X$. In the same paper, the authors provided counterexamples to the above-mentioned conjecture. Motivated by the above version, we introduce the following definition.

\begin{definition}[Dynamical Manin--Mumford condition]\label{defn: dynamical Manin Mumford condition}
    Let $f$ be a polarized endomorphism of a projective variety $X$. We say $f$ satisfies the {\it dynamical Manin--Mumford condition} if the following holds: for any subvariety $Y$ of $X$, the $Y$ is $f$-preperiodic if and only if $Y \cap \mathrm{Prep}_f(X)$ is Zariski dense in $Y$.
\end{definition}

The following gives an equivalent condition of the dynamical Manin--Mumford condition on abelian varieties.

\begin{lemma}\label{lem: dynamical Manin Mumford condition on abelian varieties}
    Let $A$ be an abelian variety, and let $f$ be a polarized isogeny of $A$. Then the following are equivalent.
    \begin{enumerate}
        \item $f$ satisfies the dynamical Manin--Mumford condition.
        \item Every subtorus of $A$ is $f$-preperiodic.
    \end{enumerate}
\end{lemma}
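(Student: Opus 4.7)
The plan is to prove both implications by combining two classical inputs: Fakhruddin's theorem that preperiodic points of a polarized endomorphism of a projective variety are Zariski dense, and the Manin--Mumford theorem of Raynaud for abelian varieties. The bridge between them is the identification $\mathrm{Prep}_f(A) = A_{\mathrm{tor}}$, which I would establish first. The inclusion $A_{\mathrm{tor}} \subseteq \mathrm{Prep}_f(A)$ is immediate because $f$ is a group homomorphism that preserves every finite subgroup $A[n]$. For the reverse, if $f^r(x) = f^s(x)$ with $r > s$, then $f^s\bigl((f^{r-s} - \mathrm{id})(x)\bigr) = 0$; the eigenvalues of $f^*$ on $H^{1,0}(A)$ all have modulus $\sqrt{q} > 1$ by Lemma~\ref{lem: eigenvalues of polarized map}, so $1$ is not an eigenvalue of $(f^{r-s})^*$ and thus $f^{r-s} - \mathrm{id}$ is an isogeny. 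Its finite kernel forces $x$ to be torsion once $(f^{r-s} - \mathrm{id})(x) \in \Ker(f^s) \subseteq A_{\mathrm{tor}}$.

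Given this identification, $(1) \Rightarrow (2)$ is almost immediate: for any subtorus $B \subseteq A$, the torsion subgroup $B \cap A_{\mathrm{tor}}$ is Zariski dense in the abelian subvariety $B$ and is contained in $\mathrm{Prep}_f(A)$, so the dynamical Manin--Mumford condition forces $B$ to be $f$-preperiodic.

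For $(2) \Rightarrow (1)$, I would handle the two directions of the DMMC separately. First, if $Y$ is $f$-preperiodic, choose $M \geq 0$ and $N \geq 1$ with $f^N(Z) = Z$ where $Z := f^M(Y)$; then $f^N|_Z$ is a polarized endomorphism of the projective variety $Z$ (with polarization $H|_Z$), so Fakhruddin's density theorem produces a Zariski dense set of $f^N|_Z$-preperiodic points in $Z$, which automatically lie in $\mathrm{Prep}_f(A)$, and pulling back along the dominant morphism $f^M|_Y : Y \to Z$ yields Zariski density of $Y \cap \mathrm{Prep}_f(A)$ in $Y$. Second, if $Y \cap \mathrm{Prep}_f(A) = Y \cap A_{\mathrm{tor}}$ is Zariski dense in $Y$, then Raynaud's classical Manin--Mumford theorem forces $Y = x_0 + B$ with $x_0 \in A_{\mathrm{tor}}$ and $B \subseteq A$ a subtorus. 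By hypothesis (2), $B$ is $f$-preperiodic, so $\{f^n(B)\}_{n \geq 0}$ is finite; since $x_0 \in A[m]$ for some $m$ and $f$ preserves $A[m]$, the orbit $\{f^n(x_0)\}_{n \geq 0}$ is finite as well. Hence $\{f^n(Y) = f^n(x_0) + f^n(B)\}_{n \geq 0}$ is finite, and pigeonhole yields $f^r(Y) = f^s(Y)$ for some $r > s$.

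The key technical point is the identification $\mathrm{Prep}_f(A) = A_{\mathrm{tor}}$, which crucially uses that $f$ is a polarized \emph{isogeny}, so that $f^k - \mathrm{id}$ is itself an isogeny for every $k \geq 1$; this is what rules out non-torsion preperiodic points and is where the hypothesis on $f$ (as opposed to a mere polarized endomorphism, which may involve a translation) is essential. Once this is in place, both implications reduce to clean applications of Fakhruddin's density theorem, Raynaud's Manin--Mumford theorem, and a pigeonhole argument.
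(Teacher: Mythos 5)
Your proof is correct and follows essentially the same route as the paper: establish $\mathrm{Prep}_f(A) = A_{\mathrm{tor}}$, use this together with density of torsion points for $(1)\Rightarrow(2)$, and for $(2)\Rightarrow(1)$ combine Fakhruddin's density theorem (for the preperiodic direction) with Raynaud's Manin--Mumford theorem and a periodicity argument on subtori and torsion translates (for the converse). The only minor difference is that you prove $\mathrm{Prep}_f(A) = A_{\mathrm{tor}}$ directly via the eigenvalue-modulus argument showing $f^k - \mathrm{id}$ is an isogeny, where the paper cites \cite[Claim~3.2]{GTZ}, and you use a pigeonhole argument where the paper invokes Lemma~\ref{lem: uniform stable index}(1).
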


\begin{proof}
    By \cite[Claim~3.2]{GTZ}, $A_\mathrm{tor}=\mathrm{Prep}_f(A)$, where $A_\mathrm{tor}$ denotes the torsion points of $A$. In particular, $\mathrm{Prep}_f(A)\cap B=A_\mathrm{tor}\cap B=B_\mathrm{tor}$ is Zariski dense in $B$ for every subtorus $B\subset A$. Assuming (1), (2) follows directly from the definition of the dynamical Manin--Mumford condition. Now we assume (2) and prove (1). Let $B$ be a subvariety of $A$.

    If $B$ is $f$-preperiodic, then $f^r(B)=f^s(B)$ for some positive integers $r>s$. In particular, $f^{r-s}$ is a polarized endomorphism on $f^s(B)$. Hence
    $f^s(B) \cap \Prep_f(A)$ is Zariski dense in $f^s(B)$ by \cite[Theorem~5.1]{Fak03}. Then $B \cap (f^{-s}(f^s(B) \cap \Prep_f(A)))$ is Zariski dense in $B$, so the set $B\cap\mathrm{Prep}_f(A)$ is Zariski dense in $B$.

    

    If $\mathrm{Prep}_f(A)\cap B$ is Zariski dense in $B$, then $B$ contains a Zariski dense set of torsion points of $A$. Hence, by \cite{Ray83} $B$ is a torsion translate of a subtorus of $A$, that is, $B=\gamma+B_0$, where $\gamma\in A_\mathrm{tor}$ and $B_0$ is a subtorus of $A$. By Lemma~\ref{lem: uniform stable index}(1) and assumption (2), $B_0$ is $f^s$-stable for some positive integer $s$. Since $\gamma\in A_\mathrm{tor}$, as in \cite[Claim~3.2]{GTZ}, there are integers $r > t$ such that $f^{r}(\gamma)=f^t(\gamma)$. Thus, $f^{rs}(B)=f^{ts}(B)$, and hence $B$ is $f$-preperiodic.
\end{proof}

As a direct corollary of Theorem~\ref{thm: ed main} (Theorem~\ref{thm: main with all subtori f numerically periodic}) and Lemma~\ref{lem: dynamical Manin Mumford condition on abelian varieties}, we have the following.

\begin{theorem}\label{thm: dynamical Manin Mumford condition}
    Let $A$ be an abelian variety, and let $f$ be a polarized isogeny of $A$. Suppose that $f$ satisfies the dynamical Manin--Mumford condition. Then $f^s$ is incompressible for some positive integer $s$.
\end{theorem}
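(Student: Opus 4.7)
The plan is to deduce this as a direct corollary of Theorem~\ref{thm: ed main} (Theorem~\ref{thm: main with all subtori f numerically periodic}) together with the equivalence furnished by Lemma~\ref{lem: dynamical Manin Mumford condition on abelian varieties}. The chain of implications is short, so the proof is essentially bookkeeping rather than a new argument.

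First, I would invoke Lemma~\ref{lem: dynamical Manin Mumford condition on abelian varieties}: since $f$ is a polarized isogeny of $A$ satisfying the dynamical Manin--Mumford condition, every subtorus $B \subset A$ is $f$-preperiodic, meaning $f^{r}(B) = f^{s}(B)$ for some non-negative integers $r > s$. In particular, $f^{r}(B)$ and $f^{s}(B)$ are (trivially) numerically parallel, so $B$ is numerically $f$-preperiodic in the sense of Definition~\ref{defn: numerically parallel and periodic}; equivalently, $B$ is $f$-preperiodic up to translation.

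With this condition in hand, the hypothesis of Theorem~\ref{thm: ed main} is satisfied: $A$ is an abelian variety, $f$ is a polarized endomorphism, and every subtorus of $A$ is $f$-preperiodic up to translation. Applying Theorem~\ref{thm: ed main} directly yields a positive integer $s$ for which $f^{s}$ is incompressible, proving the theorem.

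There is no serious obstacle: the only substantive content has already been carried out in the proofs of Lemma~\ref{lem: dynamical Manin Mumford condition on abelian varieties} (which crucially uses Raynaud's theorem on torsion-dense subvarieties of abelian varieties and Lemma~\ref{lem: uniform stable index}(1)) and Theorem~\ref{thm: main with all subtori f numerically periodic}. The present statement simply repackages Theorem~\ref{thm: ed main} under the equivalent hypothesis motivated by the algebraic dynamical Manin--Mumford conjecture.
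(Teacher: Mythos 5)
Your proposal is correct and matches the paper's own treatment exactly: the paper states Theorem~\ref{thm: dynamical Manin Mumford condition} as a direct corollary of Theorem~\ref{thm: ed main} and Lemma~\ref{lem: dynamical Manin Mumford condition on abelian varieties}, with the same observation that ordinary $f$-preperiodicity of subtori trivially implies numerical $f$-preperiodicity, so the hypothesis of Theorem~\ref{thm: ed main} is met.
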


In view of Theorem~\ref{thm: dynamical Manin Mumford condition}, we propose the following question.

\begin{question}\label{ques: dynamical Manin Mumford condition}
    Let $X$ be a projective variety, and let $f$ be a polarized endomorphism of $X$. Suppose that $f$ satisfies the dynamical Manin--Mumford condition. Is $f^s$ incompressible for some positive integer $s$?
\end{question}


\begin{thebibliography}{1}
%
%
%
%

\bibitem{Alb34} 
A.~A Albert,
\newblock {\em On the construction of Riemann matrices. I}.
\newblock {Ann. of Math. (2) \textbf{35} (1934), no. 1, 1--28.}

\bibitem{Alb35} 
A.~A Albert,
\newblock {\em On the construction of Riemann matrices. II}.
\newblock {Ann. of Math. (2) \textbf{36} (1935), no. 2, 376--394.}

\bibitem{Bea10}
A.~Beauville,
\newblock {\em Finite Subgroups of $PGL_2(K)$}.
\newblock {Contemp. Math. \textbf{522} (2010), 23--29.}

\bibitem{BM97}
E.~Bierstone and P. F.~Milman,
\newblock {\em Canonical desingularization in characteristic zero by blowing up the maximum strata of a local invariant}.
\newblock {Invent. math. \textbf{128} (1997), no. 2, 207--302.}

\bibitem{Bir21} 
C.~Birkar,
\newblock {\em  Singularities of linear systems and boundedness of Fano varieties}.
\newblock {Ann. of Math. (2) \textbf{193} (2021), no. 2, 347--405.}

\bibitem{BR97}
J.~Buhler and Z.~Reichstein, 
\newblock {\em On the essential dimension of a finite group.}
\newblock {Compos. Math. \textbf{106} (1997), no. 2, 159--179.}

\bibitem{BR99}
J.~Buhler and Z.~Reichstein, 
\newblock {\em On Tschirnhaus transformations.}
\newblock {Topics in Number Theory. Math. Appl. \textbf{467} (1999), 127--142.}

\bibitem{FKW24}
B.~Farb, M.~Kisin, and J.~Wolfson,
\newblock {\em Essential dimension via prismatic cohomology.}
\newblock {Duke Math. J. \textbf{173} (2024), no. 15, 3059--3106.}

\bibitem{FW19}
B. Farb and J. Wolfson,
\newblock {\em Resolvent degree, Hilbert’s 13th problem and geometry.}
\newblock {Enseign. Math. \textbf{65} (2019), no. 3-4, 303--376.}

\bibitem{Fak03}
N. Fakhruddin,
\newblock {\em Questions on self maps of algebraic varieties.}
\newblock {J. Ramanujan Math. Soc. \textbf{18} (2003), no. 2, 109--22.}

\bibitem{FS22}
N. Fakhruddin and R. Saini,
\newblock {\em Finite groups scheme actions and incompressibility of
Galois covers: beyond the ordinary case.}
\newblock {Doc. Math. \textbf{27} (2022), 151--182.}


\bibitem{deFE02}
T.~de Fernex and L.~Ein,
\newblock {\em Resolution of indeterminacy of pairs.}
\newblock {Algebraic geometry, de Gruyter, Berlin (2002), 165--177.}

\bibitem{Ful}
W.~Fulton,
\newblock {\em Intersection theory.}
\newblock {Second edition. Ergebnisse der Mathematik und ihrer Grenzgebiete. 3. Folge. A Series of Modern Surveys in Mathematics, Springer-Verlag, Berlin, (1998).}

\bibitem{GTZ}
D.~Ghioca, T.~J.~Tucker, and S.~Zhang,
\newblock {\em Towards a Dynamical Manin–Mumford Conjecture.}
\newblock {Int. Math. Res. Not. IMRN 2011, no. 22, 5109–5122.}

\bibitem{GKP13} D.~Greb, S.~Kebekus, and T.~Peternell,
\newblock {\em \'Etale fundamental groups of Kawamata log terminal spaces, flat sheaves and quotients of abelian varieties}.
\newblock {Duke Math. J. \textbf{165} (2016), no. 10, 1965--2004.}

\bibitem{Har} R.~Hartshorne,
\newblock {\em Algebraic geometry}.
\newblock {Graduate Texts in Mathematics, \textbf{52} (1977).}

\bibitem{HL19} K.~Hulek and R.~Laface,
\newblock {\em On the Picard numbers of Abelian varieties}.
\newblock {Ann. Sc. Norm. Super. Pisa Cl. Sci. \textbf{19} (2019), no. 5, 1199--1224.}


\bibitem{Kaw81} Y. Kawamata,
\newblock {\em Characterization of abelian varieties}.
\newblock {Compos. Math. \textbf{43} (1981), no. 2, 253--276.}

\bibitem{Klei68}
S. L.~Kleiman,
\newblock{\em Algebraic cycles and the Weil conjectures, Dix expos\'es sur la cohomologie des sch\'emas}.
\newblock{Adv. Stud. Pure Math. \textbf{3} (1968), 359--386.}


\bibitem{KZ}
J.~Koll\'ar and Z.~Zhuang,
\newblock{\em Essential dimension of isogenies}.
\newblock{Pure Appl. Math. Q. (special issue in honor of James McKernan's 60th birthday), arXiv:2402.15362.}

\bibitem{KS13}
H.~Kraft and I.~Stampfli,
\newblock {\em On Automorphisms of the Affine Cremona Group.}
\newblock {Ann. Inst. Fourier. \textbf{63} (2013), no. 3, 1137--1148.}

\bibitem{LB92}
H.~Lange and C.~Birkenhake, 
\newblock{\em Complex Abelian Varieties}. 
\newblock{Grundlehren der mathematischen Wissenschaften, \textbf{302} (1992).}

\bibitem{Laz}
R.~Lazarsfeld, 
\newblock{\em Positivity in algebraic geometry. I}. 
\newblock{A Series of Modern Surveys in Mathematics, \textbf{48} (2004).}

\bibitem{Led}
A.~Ledet, 
\newblock{\em Finite groups of essential dimension one}. 
\newblock{J. Algebra. \textbf{311} (2007), no. 1, 31--37.}


\bibitem{MZ18} S.~Meng and D.-Q.~Zhang, 
\newblock {\em Building blocks of polarized endomorphisms of normal projective varieties}.
\newblock {Adv. Math. \textbf{325} (2018), 243--273.}

\bibitem{MZ22} S.~Meng and D.-Q.~Zhang, 
\newblock {\em Kawaguchi–Silverman conjecture for certain surjective endomorphisms}.
\newblock {Doc. Math. \textbf{27} (2022), 1605--1642.}

\bibitem{Mor87}
S.~Mori,
\newblock{\em Classification of higher-dimensional varieties.}
\newblock{In Algebraic Geometry: Bowdoin 1985, Proc. Sympos. Pure Math. \textbf{46} (1987), 269--331.}

\bibitem{Mum74}
D.~Mumford, 
\newblock {\em Abelian Varieties.}
\newblock {Oxford University Press. (1974).}

\bibitem{Mur84} V. K.~Murty,
\newblock {\em Exceptional Hodge Classes on Certain Abelian Varieties}.
\newblock {Math. Ann. \textbf{268} (1984), 197--206.}

\bibitem{NZ}
N. Nakayama and D.-Q. Zhang,
\newblock{\em Polarized endomorphisms of complex normal varieties}.
\newblock{Math. Ann. \textbf{346} (2010), no. 4, 991--1018.}

\bibitem{PreS12}
A. Prendergast-Smith,
\newblock{\em The Cone Conjecture for Abelian Varieties}.
\newblock{J. Math. Sci. Univ. Tokyo. \textbf{19} (2012), no. 2, 243--261.}

\bibitem{PS14} Yu.~I. Prokhorov and C.~Shramov,
\newblock {\em Jordan property for groups of birational selfmaps}.
\newblock {Compositio Math. \textbf{150} (2014), 2054--2072.}

\bibitem{Ray83}
M.~Raynaud,
\newblock{\em Sous-vari\'et\'es d\!\'{}une vari\'et\'e ab\'elienne et points de torsion}.
\newblock{Arithmetic
and Geometry, vol. I, 327--352. Progress in Mathematics \textbf{35}, 1983.}

\bibitem{Reic10}
Z.~Reichstein,
\newblock{\em Essential dimension}.
\newblock{Proceedings of the International Congress of Mathematicians. Volume II (2010), 162--188.}

\bibitem{Shi63}
G.~Shimura, 
\newblock{\em On analytic families of polarized abelian varieties and automorphic functions}. 
\newblock{Ann. of Math. \textbf{78} (1963), 149--193.}

\bibitem{SM74} T.~Shioda and N.~Mitani,
\newblock {\em Singular abelian surfaces and binary quadratic forms}.
\newblock {Classification of
algebraic varieties and compact complex manifolds, pages 259–287. Lecture Notes in Math. \textbf{412} (1974).}


\bibitem{Ue75}
K.~Ueno,
\newblock{\em Classification Theory of Algebraic Varieties and Compact Complex Spaces}.
\newblock{Lecture Notes in Math. \textbf{439} (1975).}


\bibitem{Zh10}
D.-Q.~Zhang, 
\newblock {\em Polarized endomorphisms of uniruled varieties.}
\newblock {Compos. Math. \textbf{146} (2010), 145--168.}

\bibitem{Zh16}
D.-Q.~Zhang, 
\newblock{n-Dimensional Projective Varieties with the Action of an Abelian Group of Rank n-1.}
\newblock{Trans. Amer. Math. Soc. \textbf{368} (12) (2016), 8849--8872.}

\end{thebibliography}
\end{document}